\documentclass[11pt,a4paper,reqno]{amsart}
\usepackage{amsfonts,amsthm,amsmath,amscd}
\usepackage{booktabs}
\usepackage[T1]{fontenc}
\usepackage[utf8]{inputenc}
\usepackage[mathscr]{eucal}
\usepackage{indentfirst}
\usepackage{tikz}
\usepackage{algorithm}
\usepackage{algpseudocode}
\usetikzlibrary{shapes.geometric, positioning, calc}
\usetikzlibrary{automata,arrows,positioning,calc}
\usetikzlibrary{positioning}
\usepackage{caption}
\usepackage{subcaption}
\usepackage[font=small,labelfont=bf]{caption}
\usepackage{graphicx,graphics,pict2e}
\usepackage{tkz-berge}
\usetikzlibrary{arrows.meta} 
\usetikzlibrary{decorations.pathmorphing,decorations.markings} 
\usepackage{float}
\usepackage{epic}
\usepackage{lipsum}
\usepackage{stmaryrd}
\usepackage{authblk}
\usepackage[symbol]{footmisc}
\usepackage[normalem]{ulem}
\numberwithin{equation}{section}
\usepackage[margin=2.9cm]{geometry}
\usepackage{epstopdf}
\RequirePackage{doi}
\usepackage{hyperref}
\allowdisplaybreaks
\usepackage{cite}
\usepackage{multirow}
\usepackage{multicol}
\usepackage{rotating}
\usepackage{verbatim,amssymb,amsfonts}
\usepackage{mathrsfs}
\usepackage{mathtools}

\usepackage{tikz-cd}
\usepackage{indentfirst}
\usepackage{slashed}
\usepackage{thmtools}
\usepackage{setspace}
\usepackage{enumerate}
\usepackage{accents}
\usepackage{pdflscape}

\usetikzlibrary{arrows.meta, bending}

\usepackage{appendix}
\theoremstyle{plain}
\newtheorem{theorem}{Theorem}[section]
\newtheorem{lemma}[theorem]{Lemma}
\newtheorem{corollary}[theorem]{Corollary}
\newtheorem{proposition}[theorem]{Proposition}

\theoremstyle{definition}
\newtheorem{definition}[theorem]{Definition}
\newtheorem{remark}[theorem]{Remark}
\newtheorem{example}[theorem]{Example}

\DeclarePairedDelimiterX{\inp}[2]{\langle}{\rangle}{#1, #2}

\newcommand{\cE}{{\mathcal E}}

\newcommand{\ba}{\begin{eqnarray}}
	\newcommand{\na}{\end{eqnarray}}
\newcommand{\ban}{\begin{eqnarray*}}
	\newcommand{\nan}{\end{eqnarray*}}

\renewcommand{\thefootnote}{\fnsymbol{footnote}}

\makeatletter
\g@addto@macro{\endabstract}{\@setabstract}
\newcommand{\authorfootnotes}{\renewcommand\thefootnote{\@fnsymbol\c@footnote}}%
\makeatother

\makeatletter
\@namedef{subjclassname@2020}{%
	\textup{2020} Mathematics Subject Classification}
\makeatother

\title[]{Equivariant Analytic Spectral Invariants of \\ A\c{c}{\i}kme\c{s}e Lifts of Graphs with Self-Loops}

\subjclass[2020]{05C50, 15A18, 47A10, 47A60, 47D06}

\keywords{Self-loop graphs, A\c{c}{\i}kme\c{s}e lift, Laplacian spectrum, resolvent, equivariant heat character}

\begin{document}
	
	\begin{center}
		\vspace{-1cm}
		\maketitle		
		\normalsize
		\authorfootnotes
		Johnny Lim 
		\par \medskip
		
		{\small School of Mathematical Sciences, Universiti Sains Malaysia, Penang, Malaysia}\par \bigskip
	\end{center}
	
	\address{School of Mathematical Sciences, Universiti Sains Malaysia, Penang, Malaysia
	}
	\email{johnny.lim@usm.my}

	\begin{abstract}
	We introduce and study equivariant analytic spectral invariants associated with the A\c{c}{\i}kme\c{s}e lift of a graph with self-loops $G_S$. The canonical $\mathbb{Z}_2$-action yields an orthogonal isotypic decomposition of the lifted Laplacian into the anti-symmetric part $\mathcal{L}(G_S)$ and the symmetric part $M_{sym}$, from which we prove that the spectrum of $\mathcal{L}(G_S)$ is exactly the even-indexed spectrum of the lifted Laplacian. We express twisted moments of these blocks as traces of the generalised twisted moment operator against the corresponding $\mathbb{Z}_2$-projections, and obtain a tight upper bound under certain restriction with a characterization of the equality case. We introduce the equivariant heat character and establish a trace-norm stability estimate. For the matrix element of the resolvent $(pI+\mathcal{L}(G_S))^{-1}$ associated with the loop vector, we derive a	Laplace-transform identity for the equivariant heat character, a determinant formula, a spectral representation, and a Laurent expansion. We further characterize the case where $S$ is a union of connected components through resolvent and equivariant heat-character identities, and obtain explicit equivariant heat characters for joins of full-loop graphs and the line graph of a full-loop connected regular graph. Finally, we introduce the regularized equivariant heat integral and derive spectral and trace formulas for it.			
	\end{abstract}

	\section{Introduction}
	\label{Sect.1}
	
	This research began in 2024 with the author's attempt to study a characterization of the spectrum of complete bipartite graphs with self-loops at arbitrary positions, as a generalisation of the special case in \cite[Theorem 2.4]{akbari2023selfloop}. It turned out to be unfruitful because the method employed in the special case does not extend in general, mainly due to the Abel-Ruffini Theorem on the unsolvability of the general quintic, see e.g. \cite[\S 15]{stewart2022galois}, \cite[\S 14.7]{dummit2004abstract}, for which the occurrence of a general quintic prevents one from expecting a formula by radicals without additional algebraic restrictions. Regardless of the associated matrix under study, the main crux of this problem remains the complexity of the number of loops $\sigma$ intertwined with other graph invariants in the corresponding characteristic polynomial. One possible way out is to remove $\sigma$ while retaining its ``effect'' in a reasonable setting. This leads to studying the established spectral relation between the Laplacians of a self-loop graph and its simple graph, given by A\c{c}{\i}kme\c{s}e in \cite{acikmese2015spectrum} via a certain lifting.

	Such a lifting, which we shall call the \textit{A\c{c}{\i}kme\c{s}e lift} henceforth, is a simple graph of odd order obtained from $G_S$ by taking two copies of the corresponding simple graph and adjoining a new middle vertex to the two copies of each looped vertex.; see Fig.~\ref{fig:Alift}. A\c{c}{\i}kme\c{s}e showed in  \cite[Theorem~1]{acikmese2015spectrum} that the Laplacian spectrum of $G_S$ is contained in that of its lift, but no further explicit relation between the two spectra was reported. By verification with small-order examples, one observes that, after ordering the eigenvalues monotonically, the spectrum of $\mathcal L(G_S)$ appears exactly as the even-indexed spectrum of the lifted Laplacian. This motivates the author to pursue the following questions:
	
	\begin{enumerate}[(a).]
		\item For any self-loop graph $G_S,$ is it always true that its Laplacian spectrum can be retrieved from the even-indexed Laplacian spectrum of its A\c{c}{\i}kme\c{s}e lift $\widetilde{G}?$ If true, how can this be proved algebraically?   
		\item Beyond the Laplacian spectrum itself, what other spectral quantities associated with self-loop graphs can be extended to the A\c{c}{\i}kme\c{s}e lift? In particular, can the twisted-moment framework motivated in \cite{lim2025closedwalk} be extended and formulated in this setting?
		\item Since the A\c{c}{\i}kme\c{s}e lift admits a group action and an involution, what kind of equivariant analytic spectral invariants can be associated with this action, and what information about $G_S$ or the loop set $S$ can they retain?
	\end{enumerate}
	

	The first question is approached through an involution of the A\c{c}{\i}kme\c{s}e lift which interchanges the two copies of the underlying vertex set while fixing the middle vertex. This induces a $\mathbb Z_2$-action on the space of real-valued vertex functions of the lift. In Theorem~\ref{orthogonalequiv}, we show that the corresponding symmetric and anti-symmetric isotypic subspaces are invariant under the lifted Laplacian, with the restrictions orthogonally equivalent to $M_{sym}$ and $\mathcal L(G_S)$, respectively, where $\mathcal L(G_S)$ is a principal submatrix of $M_{sym}$. Combining this decomposition with the interlacing theorem gives an affirmative answer to Question~(a): if $\lambda_1\geq\cdots\geq\lambda_N$ and $\mu_1\geq\cdots\geq\mu_{2N+1}$ are the eigenvalues of $\mathcal L(G_S)$ and $\mathcal L(\widetilde G)$, respectively, then Theorem~\ref{mainthm1} gives
	\[
	\lambda_i=\mu_{2i},\qquad 1\leq i\leq N.
	\]
	
	The same $\mathbb Z_2$-decomposition is then applied to spectral moments. After determining the first few Laplacian spectral moments of the lift (see Theorem \ref{thm:spectral_moments}), we consider twisted moments relative to a center $c$ and introduce the corresponding generalised $c$-twisted moment operator by spectral functional calculus. Theorem \ref{momentdiff1} identifies the twisted moments of $\mathcal L(G_S)$ and $M_{sym}$ with traces obtained from the anti-symmetric and symmetric $\mathbb{Z}_2$-projections, respectively. Their difference is therefore given by an equivariant trace involving the involution matrix. Using a trace inequality for products of symmetric matrices, we obtain bounds for this difference. A refinement is given in Theorem \ref{thm:tightbound1}, where a tight upper bound is obtained together with a characterization of the equality case. The complete self-loop graph is subsequently considered to illustrate the spectral formulae and the attainment of this bound. This provides an answer to Question (b).
	
	The canonical involution also allows one to associate an \textit{equivariant heat character} with the A\c{c}{\i}kme\c{s}e lift. For the trivial element $1 \in \mathbb Z_2$, this recovers the ordinary heat trace, whilst for its nontrivial element $g \in \mathbb{Z}_2$, Theorem \ref{thm:isotypic_heat_decomposition} gives 
	\[
	\chi_t^{\widetilde G}(g)=\mathrm{Tr}(e^{-tM_{sym}})-\mathrm{Tr}(e^{-t\mathcal L(G_S)}).
	\]
	Thus, the ordinary and nontrivial equivariant heat characters recover the heat traces of the two isotypic blocks separately. In Proposition \ref{prop:heat_character_lipschitz}, we further prove a stability estimate for the equivariant heat character with respect to the trace norm of the lifted Laplacian using Duhamel's principle. One observes that $pI+\mathcal L(G_S),$ for every $p>0,$ is invertible. This motivates us to consider the matrix element $R_{G_S}(p)$ of the resolvent operator $(pI+\mathcal L(G_S))^{-1}$ associated with the loop vector $s$. 
	In Theorem \ref{thm:equivariant_resolvent}, we establish an identity for the Laplace transform of the equivariant heat character in terms of logarithmic derivative involving this matrix element:
	\[
	\int_0^\infty e^{-pt}\chi_t^{\widetilde{G}}(g)\,dt = \frac{d}{dp}\log\left(p + 2\sigma - 2R_{G,S}(p)\right), \quad p>0.
	\]
	In Theorem \ref{thm:loop_resolvent_spectral_measure_and_moments}, we further give the spectral representation and Laurent expansion of $R_{G,S}(p)$, which relate the poles and coefficients of the resolvent to the spectral projections of the loop vector and to the corresponding loop moments.
	
	These results admit a special case when the loop set $S$ is a union of connected components of the underlying graph. In Theorem \ref{thm:character_special_case}, we show that this condition is equivalent to $\mathcal L(G_S)s=s$, to the resolvent identity $R_{G,S}(p)=\sigma/(p+1)$, and to the equivariant heat-character formula using inverse Laplace transform
	\[
	\chi_t^{\widetilde{G}}(g)=1+e^{-(2\sigma+1)t}-e^{-t}.
	\]
	In particular, if the underlying graph is connected, this corresponds to the full-loop case. We then obtain explicit equivariant heat characters for several full-loop graph constructions. These include joins of full-loop graphs in Proposition \ref{prop:character_join_full_loop} and, after first deriving the corresponding Laplacian characteristic polynomial, the line graph of a full-loop connected regular graph in Proposition \ref{prop:line_graph_full_loop_character}.
	
	Finally, for a pseudo-connected self-loop graph, the nontrivial equivariant heat character satisfies $\chi_t^{\widetilde G}(g)\to 1$ as $t\to\infty$. This leads us to consider the \textit{regularized equivariant heat integral} $\mathcal I_{eq}(\widetilde G).$
	In Theorem \ref{thm:spectral_form_Ieq}, we establish that it can be alternatively expressed as a difference of traces:
	\[
	\mathcal I_{eq}(\widetilde G) \overset{\text{def}}{=}\int_0^\infty(\chi_t^{\widetilde G}(g)-1)\,dt = \mathrm{Tr}(M_{sym}^{\dagger}) - \mathrm{Tr}(\mathcal{L}(G_S)^{-1})
	\]
	involving the Moore--Penrose inverse of $M_{sym}$ and the inverse of $\mathcal L(G_S).$
	We further derive an expression in terms of inverse powers of $\mathcal L(G_S)$ and obtain a strict negative bound for it. These provide some answers to Question (c). 
	
	This article is organized as follows. Section \ref{Sect.2} recalls the Laplacian of a graph with self-loops and the construction of the A\c{c}{\i}kme\c{s}e lift. In Section \ref{Sect.3}, we formulate the canonical $\mathbb Z_2$-action and establish the isotypic decomposition and the	spectral result. Section \ref{Sect.4} establishes Laplacian spectral moments, generalised twisted moments and their associated trace inequalities. Section \ref{Sect.5} introduces the equivariant heat character and develops its relation with the resolvent matrix element associated with the loop vector. The special full-loop case, joins and line graphs are then considered, followed by the regularized equivariant heat integral.

	\vspace{-1em}
	\section{Preliminaries and Definitions}
	\label{Sect.2}

	Let $G=(V,E^\circ)$ be a finite undirected simple (without self-loops and multiple edges) graph of order $n=|V|,$  where $E^\circ$ is the edge set. Throughout, $A(G)=[a_{ij}]$ will be the adjacency matrix of $G,$ where the entry is $a_{ij}=1$ if $v_i$ and $v_j$ are adjacent and $a_{ij}=0$ otherwise;  $D(G)$ denotes the diagonal degree matrix of $G,$ i.e., the entries are diagonally $d_{i}$ being the degree of $v_i$ and zero otherwise. Let $S \subseteq V.$ Form a \textit{self-loop} graph $G_S$ from $G$ by attaching a loop at every vertex in $S.$ We denote $\sigma:=|S|$ as the number of loops on $G_S$. 
	
	For more background on algebraic graph theory and notions not fully elaborated in this article, we refer the readers to  \cite{biggs1993algebraic, cvetkovic1995spectra, cvetkovic2010intro, BrouwerHaemers}; for self-loop graphs, see \cite{akbari2023selfloop, akbari2024line, gutman2021energy, jovanovic2023}; and for relevant spectral Laplacians, see \cite{chung1997spectral, anchan20232, kinkar2014laplacian} Our basic setting primarily follows \cite{acikmese2015spectrum} and we develop further from that.
	
	\begin{definition}\cite{acikmese2015spectrum,chung1997spectral}
		\label{LaplacianLoop}
		Let $G$ be a simple graph. The Laplacian\footnote{We adopt the convention unnormalized Laplacians.} matrix of $G$ is defined by $\mathcal{L}(G)=D(G)-A(G),$ i.e.,
		$\mathcal{L}_{ij}=-1,$ for $i \neq j,$ if $v_i$ and $v_j$ are adjacent; $\mathcal{L}_{ii}=d_{i}$ the vertex degree on the diagonal, and zero otherwise. Let $S\subseteq V(G).$ For a self-loop graph $G_S,$ its Laplacian matrix is defined by
		\begin{equation}\label{eq:LaplacianLoop1}
			\mathcal{L}(G_S)=\mathcal{L}(G)+\sum_{i\in S} e_i e_i^\top,
		\end{equation}
		where $e_i\in\mathbb{R}^n$ is the $i$-th standard basis vector.
	\end{definition}
	
	More precisely, let $C$ be a vertex-edge incidence matrix of $G_S$. Then, $C$ can be partitioned as
  	$C= \begin{bmatrix}
			E_o\\ \mathcal{S}
		\end{bmatrix},$ 
	where $E_o\in\mathbb{R}^{m\times n}$ corresponds to the ordinary edges in $E^\circ$, and $\mathcal{S}\in\mathbb{R}^{\sigma\times n}$ corresponds to the self-loops. Since each row of $\mathcal{S}$ describes a self-loop, there is exactly one nonzero entry equal to $1$ in that row. If the $k$-th self-loop is incident to vertex $i_k\in\{1,\dots,n\}$, then the $k$-th row of $\mathcal{S}$ is
	$e_{i_k}^\top.$
	Equivalently,
	\begin{equation}
		\mathcal{S}_{k,j}
		=
		\begin{cases}
			1,& j=i_k,\\
			0,& j\neq i_k.
		\end{cases}
	\end{equation}
	It follows that
	$	\mathcal{S}^\top \mathcal{S}=\sum_{i\in S} e_i e_i^\top. $
	Hence, \eqref{eq:LaplacianLoop1} may also be written as
	\begin{equation}\label{eq:LaplacianLoop2}
		\mathcal{L}(G_S)=E_o^\top E_o+\mathcal{S}^\top \mathcal{S}.
	\end{equation}

	\begin{definition}\cite{acikmese2015spectrum}
		\label{def:pseudo-connected}
		An undirected self-loop graph without multiple edges is said to be \emph{pseudo-connected} if every connected component contains at least one vertex with a self-loop.
	\end{definition}
	
	A\c{c}{\i}kme\c{s}e \cite{acikmese2015spectrum} proved that the Laplacian $\mathcal{L}(G_S)$ of a pseudo-connected graph $G_S$ is positive definite. In the following, we prove a stronger result by establishing the converse.
	
	\begin{lemma}
		\label{lem:positive-definite}
		The Laplacian $\mathcal{L}(G_S)$ is positive definite if and only if every connected component of $G_S$ contains at least one self-loop. 
	\end{lemma}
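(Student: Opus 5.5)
The plan is to work with the quadratic form of $\mathcal{L}(G_S)$. By \eqref{eq:LaplacianLoop2}, for every $x\in\mathbb{R}^n$,
\[
x^\top \mathcal{L}(G_S)\,x=\|E_o x\|^2+\|\mathcal{S}x\|^2=\sum_{\{i,j\}\in E^\circ}(x_i-x_j)^2+\sum_{i\in S}x_i^2\;\geq 0 .
\]
So $\mathcal{L}(G_S)$ is always positive semidefinite. Positive definiteness is then equivalent to the kernel condition: $x^\top\mathcal{L}(G_S)x=0$ must force $x=0$.

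For the ``if'' direction, suppose every connected component contains a looped vertex, and let $x^\top\mathcal{L}(G_S)x=0$. Both sums above vanish termwise. The vanishing of the edge sum gives $x_i=x_j$ for every edge, so $x$ is constant on each connected component of $G$; the components of $G_S$ and $G$ are the same, since loops do not change connectivity. The vanishing of the loop sum gives $x_i=0$ for every $i\in S$. Each component contains some $i\in S$, so the constant value of $x$ on that component is $0$. Hence $x=0$ and $\mathcal{L}(G_S)$ is positive definite. This recovers A\c{c}{\i}kme\c{s}e's result.

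For the converse, argue by contrapositive. Suppose some component $C$ contains no self-loop, and take $x=\mathbf{1}_C$, the indicator vector of its vertex set, which is nonzero. Every edge lies either inside $C$ or entirely outside it, so each term $(x_i-x_j)^2$ vanishes. Since $C\cap S=\emptyset$, each loop term $x_i^2$ vanishes as well. Thus $x^\top\mathcal{L}(G_S)x=0$ with $x\neq 0$, and in fact $\mathcal{L}(G_S)\mathbf{1}_C=0$ directly, so $0$ is an eigenvalue and $\mathcal{L}(G_S)$ is not positive definite.

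There is no real obstacle. The only point needing care is to justify the quadratic form identity from \eqref{eq:LaplacianLoop1} and \eqref{eq:LaplacianLoop2}, namely $E_o^\top E_o=\mathcal{L}(G)$ for an oriented incidence matrix, and to note that the components of $G_S$ coincide with those of $G$.
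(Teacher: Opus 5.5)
Your proposal is correct and follows essentially the same route as the paper: the quadratic form $\sum_{(i,j)\in E^\circ}(x_i-x_j)^2+\sum_{i\in S}x_i^2$, termwise vanishing for the ``if'' direction, and the indicator vector $\mathbf{1}_C$ of a loopless component for the converse. Your added remarks, that the components of $G_S$ and $G$ coincide and that $\mathcal{L}(G_S)\mathbf{1}_C=0$ directly, are harmless refinements of the same argument.
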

	
	\begin{proof}
		For any $x=(x_1,\dots,x_n)^\top\in\mathbb{R}^n$, since
		$x^\top \mathcal{L}(G)x	=	\sum_{(i,j)\in E^\circ}(x_i-x_j)^2$, by using \eqref{eq:LaplacianLoop1} we have
		\begin{equation}
		x^\top \mathcal{L}(G_S)x
		= x^\top \mathcal{L}(G)x+\sum_{i\in S}x_i^2  
		= \sum_{(i,j)\in E^\circ}(x_i-x_j)^2+\sum_{i\in S}x_i^2. \label{eq:quadratic-form}
		\end{equation}
		Hence, $\mathcal{L}(G_S)$ is always positive semidefinite.
		
		Assume first that every connected component of $G_S$ contains at least one self-loop. Let $x\in\mathbb{R}^n$ satisfy $x^\top \mathcal{L}(G_S)x=0$. Since each summand in \eqref{eq:quadratic-form} is nonnegative, every summand must vanish. Therefore, we have
		\begin{enumerate}[(i)]
			\item $x_i=x_j$ for all $(i,j)\in E^\circ$, so $x$ is constant on each connected component of $G$;
			\item $x_i=0$ for all $i\in S$.
		\end{enumerate}
		Since each connected component contains a looped vertex, the constant value on each	component must be $0$. Hence, $x=0$, and so $\mathcal{L}(G_S)$ is positive definite.
		
		Conversely, suppose that some connected component $C$ of $G_S$ contains no self-loop. Let $x=\mathbf{1}_C$ be the indicator vector of $C$. Then, $x\neq 0$, and $x_i=x_j$ for every edge $(i,j)\in E^\circ$. Moreover, since $C\cap S=\emptyset$, the loop term vanishes on $C$. Thus, \eqref{eq:quadratic-form} gives $x^\top \mathcal{L}(G_S)x=0$, so $\mathcal{L}(G_S)$ is not positive definite.
	\end{proof}
	
	Now, we state the definition of our main operation:
	
	\begin{definition}\cite{acikmese2015spectrum}
		\label{liftgraph1}
		Suppose that $G_S=(V,E)$ is a graph with self-loops of order $n$ and with $\sigma\ge 1$. Its \textit{A\c{c}{\i}kme\c{s}e lift graph} $\widetilde{G}=(\widetilde{V},\widetilde{E})$ is a simple graph with $2n+1$ vertices constructed as follows: for every vertex $i\in V$, there exist corresponding vertices $i$ and $i+n+1$ in $\widetilde{G}$, together with a single middle vertex $n+1$. The edge set $\widetilde{E}$ is defined by:
		\begin{enumerate}[(i)]
			\item if $(i,j)\in E^\circ$, then
			$
			(i,j)\in \widetilde{E}
			\;\text{and}\;
			(i+n+1,j+n+1)\in \widetilde{E};
			$
			\item if $(i,i)\in E$, then
			$
			(i,n+1)\in \widetilde{E}
			\;\text{and}\;
			(n+1,i+n+1)\in \widetilde{E}.
			$
		\end{enumerate}
		We refer to the induced subgraph on the vertices $\{1,\dots,n\}$ and $\{n+2,\dots,2n+1\}$ as the
		\emph{lower copy} and  \emph{upper copy} of $G$, respectively. 
	\end{definition}	
	
	\vspace{-1em}
	
	\begin{figure}[H]
		\begin{center}
			\begin{subfigure}{0.3 \textwidth}
							\centering
				\begin{tikzpicture}[node distance={15mm}, thick, 
					main/.style = {circle, fill=black, inner sep=2pt}] 
					
					\node[main, label=below:$1$] (1) {}; 
					\node[main, label=below:$2$] (2) [right of=1] {}; 
					\node[main, label=below:$3$] (3) [right of=2] {}; 
					
					\draw (1) -- (2); 
					\draw (2) to [out=60,in=120,looseness=20] (2); 
					\draw (3) to [out=60,in=120,looseness=20] (3); 
					
				\end{tikzpicture} 
				\subcaption{Before lifting}
			\end{subfigure}
			\hspace{2em}
			\begin{subfigure}{0.3\textwidth}
							\centering
				\begin{tikzpicture}[node distance={15mm}, thick, 
					main/.style = {circle, fill=black, inner sep=2pt}] 
					
					\node[main, label=below:$1$] (1) {}; 
					\node[main, label=below:$2$] (2) [right of=1] {}; 
					\node[main, label=below:$3$] (3) [right of=2] {};
					\node[main, label=right:$4$] (4) [above right=10mm of 2] {}; 
					\node[main, label=above:$5$] (5) [above=16mm of 1] {}; 
					\node[main, label=above:$6$] (6) [above=16mm of 2] {}; 
					\node[main, label=above:$7$] (7) [above=16mm of 3] {}; 
					
					\draw (1) -- (2); 
					\draw (5) -- (6); 
					\draw (2) -- (4); 
					\draw (3) -- (4); 
					\draw (6) -- (4); 
					\draw (7) -- (4); 
					
				\end{tikzpicture} 
				\subcaption{After lifting}
			\end{subfigure}
			\caption{An example and illustration of A\c{c}{\i}kme\c{s}e lift.}
			\label{fig:Alift}
		\end{center}
	\end{figure}

	Let $s:=\mathcal{S}^\top \mathbf{1}_\sigma\in\mathbb{R}^n.$
	Then, $s=(s_1,s_2,\ldots, s_n),$ where 
	\begin{equation} \label{eq:indicatorvec}
	s_i=
	\begin{cases}
		1,& i\in S,\\
		0,& i\notin S.
	\end{cases}
	\end{equation}
	For simplicity, we shall call such $s$ as the \textit{loop vector}.	For the lifted graph $\widetilde{G}$, choose an orientation of the ordinary edges in the lower and upper copies according to the incidence matrix $E_o$. For each self-loop at a vertex $i\in S$, the lift has two ordinary edges, $(i,n+1)$ and $(n+1,i+n+1).$ We orient the first from the lower copy to the middle vertex and the second from the middle vertex to the upper copy. In particular, an edge-vertex incidence matrix of $\widetilde{G}$ is
	\begin{equation}\label{eq:IncidenceLift}
		\widehat{E}
		=
		\begin{pmatrix}
			E_o & 0 & 0\\
			\mathcal{S} & -\mathbf{1}_\sigma & 0\\
			0 & \mathbf{1}_\sigma & -\mathcal{S}\\
			0 & 0 & E_o
		\end{pmatrix}.
	\end{equation}
	The choice of orientation does not affect the Laplacian, since changing the orientation	of any edge only multiplies the corresponding row of $\widehat{E}$ by $-1$, leaving $\widehat{E}^{\top}\widehat{E}$ unchanged. Using \eqref{eq:LaplacianLoop2}, the Laplacian\footnote{We remark that the Laplacian matrix in \cite[pp. 5]{acikmese2015spectrum} should be read with the signed incidence convention for ordinary edges in the lifted graph. Under this convention, the off-diagonal coupling blocks carry negative signs, and the middle diagonal entry is $2\sigma$, the degree of the middle vertex. As a result, the Laplacian in \eqref{eq:LaplacianLift1} is different than that of \cite[pp. 5]{acikmese2015spectrum}.} of the A\c{c}{\i}kme\c{s}e lift graph is
	\begin{equation}\label{eq:LaplacianLift1}
		\mathcal{L}(\widetilde{G})
		=
		\widehat{E}^{\top}\widehat{E}
		=
		\begin{pmatrix}
			\mathcal{L}(G_S) & -\mathcal{S}^\top \mathbf{1}_\sigma & 0\\
			-\mathbf{1}_\sigma^\top \mathcal{S} & 2\sigma & -\mathbf{1}_\sigma^\top \mathcal{S}\\
			0 & -\mathcal{S}^\top \mathbf{1}_\sigma & \mathcal{L}(G_S)
		\end{pmatrix}
		=
		\begin{pmatrix}
			\mathcal{L}(G_S) & -s & 0\\
			-s^\top & 2\sigma & -s^\top\\
			0 & -s & \mathcal{L}(G_S)
		\end{pmatrix}.
	\end{equation}	
	Indeed, the adjacency and degree matrices of $\widetilde{G}$ are, respectively,  
	\[
	A(\widetilde{G})=
	\begin{pmatrix}
		A(G) & s & 0\\
		s^\top & 0 & s^\top\\
		0 & s & A(G)
	\end{pmatrix}, \quad
	D(\widetilde{G})=
	\begin{pmatrix}
		D(G)+\mathrm{diag}(s) & 0 & 0\\
		0 & 2\sigma & 0\\
		0 & 0 & D(G)+\mathrm{diag}(s)
	\end{pmatrix}.
	\]
	Therefore, $\mathcal{L}(\widetilde{G})=D(\widetilde{G})-A(\widetilde{G})$, which aligns with \eqref{eq:LaplacianLift1}.
			
	Furthermore, it was established in  \cite[Theorem 1]{acikmese2015spectrum} that the inclusion relation
	\[
	\mathrm{Spec}(\mathcal{L}(G_S))\subseteq \mathrm{Spec}(\mathcal{L}(\widetilde{G}))\cap [\,0,\,2\Delta(G)+1\,]
	\]
	holds, where $\Delta(G)$ denotes the maximum degree of $G$.
	
	If $\mathcal{L}(G_S)v=\lambda v$, then by \eqref{eq:LaplacianLift1}, 
	\[
	\mathcal{L}(\widetilde{G})
	\begin{pmatrix}
		v\\
		0\\
		-v
	\end{pmatrix}
	=
	\begin{pmatrix}
		\mathcal{L}(G_S)v\\
		-s^\top v+s^\top v\\
		-\mathcal{L}(G_S)v
	\end{pmatrix}
	=
	\lambda
	\begin{pmatrix}
		v\\
		0\\
		-v
	\end{pmatrix},
	\]
	which shows that $\lambda \in \mathrm{Spec}(\mathcal{L}(\widetilde{G}))$.
	Moreover, since $\mathcal{S}^\top \mathcal{S}\le I_n$ (i.e., $I_n - \mathcal{S}^\top \mathcal{S}$ is positive semidefinite),  $\mathcal{L}(G_S)=\mathcal{L}(G)+\mathcal{S}^\top \mathcal{S}\le \mathcal{L}(G)+I_n.$
	Hence, we have
	\[
	\lambda_{\max}(\mathcal{L}(G_S))
	\le \lambda_{\max}(\mathcal{L}(G))+1 \le 2\Delta(G)+1.
	\]
	
	\section{Representation-Theoretic Formulation and $\mathbb{Z}_2$-Isotypic Decomposition}
	\label{Sect.3}
	
	Consider the abstract group $\mathbb{Z}_2=\{1,g\},$  where  $1$ is the identity element and $g$ is the generator satisfying $g^2=1.$ To analyze the spectral properties of the Laplacian $\mathcal{L}(\widetilde{G})$, we formally define an involution on the vertex set $\widetilde{V}$ and examine the resulting group action.
		
	\begin{definition}\label{involution1}
		Define a permutation mapping $\tau: \widetilde{V} \rightarrow \widetilde{V}$ by the following rules:
		\begin{align}
			\tau(i) = i+N+1, &\quad   
			\tau(i+N+1) = i  \qquad \text{for } 1 \le i \le N, \label{eq:tau_def} \\
			\tau(N+1) &= N+1. \nonumber
		\end{align}
	\end{definition}
	
	\begin{lemma}
	Let $\widetilde{G}= (\widetilde{V}, \widetilde{E})$ be the \textit{Aç{\i}kme\c{s}e lift graph} of a self-loop graph $G_S,$ where $N:= |V(G_S)|.$ Then, the permutation mapping $\tau$ defines a graph automorphism of $\widetilde{G}$ with an associated $\mathbb{Z}_2$ action on the vertex set $\widetilde{V}.$
	\end{lemma}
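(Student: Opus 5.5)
We verify directly from Definition~\ref{liftgraph1} that $\tau$ is a bijection of $\widetilde V$ that preserves adjacency and non-adjacency, and then note that $\tau^2=\mathrm{id}$. (Here $N=n$ is the order of $G_S$.)

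\emph{Step 1: $\tau$ is an involution.} From \eqref{eq:tau_def}, $\tau$ swaps $i$ and $i+N+1$ for each $1\le i\le N$ and fixes $N+1$. Hence $\tau^2=\mathrm{id}_{\widetilde V}$, so $\tau$ is its own inverse and is a bijection.

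\emph{Step 2: $\tau$ maps edges to edges.} Following Definition~\ref{liftgraph1}, the edges of $\widetilde G$ fall into three types.
\begin{enumerate}[(i)]
\item Lower-copy edges $(i,j)$ with $(i,j)\in E^\circ$. These are sent to $(i+N+1,j+N+1)$, which is an upper-copy edge.
\item Upper-copy edges $(i+N+1,j+N+1)$. These are sent back to the lower-copy edge $(i,j)$.
\item Middle edges $(i,N+1)$ and $(N+1,i+N+1)$ for $i\in S$. These are interchanged with each other, since $\tau$ fixes $N+1$.
\end{enumerate}
Thus $\tau(\widetilde E)\subseteq\widetilde E$. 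Applying $\tau$ again and using $\tau^2=\mathrm{id}$ gives $\widetilde E\subseteq\tau(\widetilde E)$, so $\tau(\widetilde E)=\widetilde E$ and non-edges also go to non-edges. Therefore $\tau$ is an automorphism.

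The same fact can be checked with matrices. Let $P$ be the permutation matrix of $\tau$ in block form,
\[
P=\begin{pmatrix}0&0&I_N\\0&1&0\\I_N&0&0\end{pmatrix}.
\]
Multiplying out with the block form of $A(\widetilde G)$ shows $PA(\widetilde G)P^\top=A(\widetilde G)$, and by \eqref{eq:LaplacianLift1} also $P\mathcal L(\widetilde G)P^\top=\mathcal L(\widetilde G)$.

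\emph{Step 3: the $\mathbb{Z}_2$-action.} Define a map $\mathbb Z_2\to\mathrm{Aut}(\widetilde G)$ by $1\mapsto\mathrm{id}$ and $g\mapsto\tau$. Because $g^2=1$ and $\tau^2=\mathrm{id}$, this map is a group homomorphism, so it gives an action of $\mathbb Z_2$ on $\widetilde V$. The action is nontrivial when $N\ge1$, since $\tau$ moves every vertex other than $N+1$.

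This action induces a linear action on functions $f:\widetilde V\to\mathbb R$ by $(g\cdot f)(v)=f(\tau(v))$, which will be used in the next results.

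There is no real obstacle here. The only step needing care is the loop edges in Step 2: one must check that the two edges attached to the middle vertex really swap, which depends on $\tau$ fixing $N+1$.
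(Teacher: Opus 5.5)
Your proof is correct and follows essentially the same route as the paper. Both check case by case that lower-copy edges, upper-copy edges and the two middle edges of each looped vertex are sent to edges, then use $\tau^2=\mathrm{id}$ to get bijectivity and the reverse inclusion, so $\langle\tau\rangle\cong\mathbb{Z}_2$ acts on $\widetilde V$; your extra matrix check $PA(\widetilde G)P^\top=A(\widetilde G)$ is also valid and anticipates the commutation $[\mathcal{L}(\widetilde G),T]=0$ the paper uses later.
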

	
	\begin{proof}
		Let $(u, v) \in \widetilde{E}$. We must show that $(\tau(u), \tau(v)) \in \widetilde{E}$. We proceed by cases based on the construction of $\widetilde{E}$ in Definition \eqref{liftgraph1}:
		\begin{enumerate}
			\item Assume $(u, v)$ is an ordinary edge in the lower copy arising from an edge $(i, j) \in E^\circ$. Then, $\tau(i) = i+N+1$ and $\tau(j) = j+N+1$. By Definition \ref{liftgraph1}, $(i+N+1, j+N+1) \in \widetilde{E}$, thus preserving the adjacency. The converse holds identically for edges in the upper copy.
			\item Assume $(u,v)$ is the lifted edge $(i,N+1)\in \widetilde{E}$ arising from a self-loop $(i,i)\in E$ of $G_S$.
			Applying $\tau$ yields $(\tau(i), \tau(N+1)) = (i+N+1, N+1)$. Since $(N+1, i+N+1) \in \widetilde{E}$, the adjacency is preserved.
		\end{enumerate}
		Since $\tau$ preserves all adjacencies, it is a graph homomorphism. Furthermore, $\tau(\tau(v)) = v$ for all $v \in \widetilde{V}$, implying $\tau^2 = \text{id}.$ Thus, the map $\tau$ is bijective and $\tau^{-1}=\tau$. It follows that the implication
		\[
		(u,v)\in\widetilde{E}
		\Longrightarrow
		(\tau(u),\tau(v))\in\widetilde{E}
		\]
		also applies to $\tau^{-1}$ which gives the reverse implication. Hence, $\tau$ is a graph automorphism and the group $\langle \tau \rangle \cong \mathbb{Z}_2$ acts on the vertex set $\widetilde{V}$.
	\end{proof}
	
	The group action of $\mathbb{Z}_2$ on $\widetilde{V}$ naturally induces a linear representation on the real vector space vertex functions, $\mathbb{R}^{\widetilde{V}}$. Let $T \in \mathrm{GL}(\mathbb{R}^{\widetilde{V}})$ be the $(2N+1) \times (2N+1)$ permutation matrix associated with the action of $\tau:$ 
	\begin{equation}\label{eq:matrixT}
		T = 
		\begin{pmatrix} 
			0 & 0 & I_N \\ 
			0 & 1 & 0 \\ 
			I_N & 0 & 0 
		\end{pmatrix}.
	\end{equation} 
	More precisely, since $|\widetilde{V}|=2N+1,$ we may view $\mathbb{R}^{\widetilde{V}} \cong \mathbb{R}^{2N+1},$ where a vector $x =(x_u)_{u \in \widetilde{V}}$ represents a real-valued function defined on the vertices. Then, the action of $\tau$ on the vertices induces a linear action on the functions as: 
	\[
	(\tau \cdot x)_u:= x_{\tau^{-1}(u)}, \quad u \in \widetilde{V}.
	\]
	This action is linear and it is represented by the permutation matrix $T$ which acts on the standard basis vectors by $T e_u=e_{\tau(u)}.$ Thus, the linear action on any vector $x$ is simply the matrix multiplication $\tau \cdot x = Tx.$ In particular, $T$ is the matrix realization of a linear representation of the group $\mathbb{Z}_2:$ define $\rho: \mathbb{Z}_2 \to \mathrm{GL}(\mathbb{R}^{\widetilde{V}})$ by 
	\[
	\rho(1)=I_{2N+1} \quad \text{ and } \quad \rho(g)=T.
	\]
	Since $\tau^2=\mathrm{id}$, the associated permutation matrix satisfies $T^2=I_{2N+1}$. Therefore, $\rho(1)=I_{2N+1}$ and $\rho(g)=T$ respects the relation $g^2=1$ of $\mathbb{Z}_2$. Hence, $\rho$ is a representation.
	
	Recall that a representation $\rho : \mathrm{G} \to \mathrm{GL}(\mathcal{V})$ is said to be isotypic if it is a direct sum of isomorphic irreducible representation, and there is a canonical decomposition of $\mathcal{V}$ into the direct sum of isotypic representation, which we simply call isotypic decomposition, cf. \cite[\S 8]{serre1977linear}.
	
	\begin{proposition}\label{isotypicdecomp}
		The vector space $\mathbb{R}^{\widetilde{V}}$ admits the orthogonal $\mathbb{Z}_2$-isotypic decomposition:
		\begin{equation}\label{eq:decomp1}
			\mathbb{R}^{\widetilde{V}} = (\mathbb{R}^{\widetilde{V}})^+ \oplus (\mathbb{R}^{\widetilde{V}})^-
		\end{equation}
		where $(\mathbb{R}^{\widetilde{V}})^+ = \ker(T - I)$ and $(\mathbb{R}^{\widetilde{V}})^- = \ker(T + I)$ are the symmetric and anti-symmetric components, respectively.
	\end{proposition}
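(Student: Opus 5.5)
We will use the two projections attached to the involution $T$,
\[
P_+=\tfrac12(I+T),\qquad P_-=\tfrac12(I-T),
\]
which are the isotypic projections of the regular character theory of $\mathbb{Z}_2$ (cf. \cite[\S 8]{serre1977linear}). Since $T^2=I_{2N+1}$, a direct computation gives
\[
P_\pm^2=P_\pm,\qquad P_+P_-=P_-P_+=0,\qquad P_++P_-=I.
\]
So every $x\in\mathbb{R}^{\widetilde V}$ decomposes as $x=P_+x+P_-x$.

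Next we identify the images. The relation $TP_\pm=\pm P_\pm$ shows that $\operatorname{im}P_+\subseteq\ker(T-I)$ and $\operatorname{im}P_-\subseteq\ker(T+I)$. Conversely, if $Tx=\pm x$, then $P_\pm x=x$. Hence $\operatorname{im}P_+=\ker(T-I)$ and $\operatorname{im}P_-=\ker(T+I)$, and the sum is direct: if $x$ lies in both kernels, then $x=Tx=-x$, so $x=0$.

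For orthogonality, note that $T$ is a permutation matrix, hence orthogonal. Because $T=T^{-1}$, it is also symmetric, $T^\top=T$. Thus $P_\pm$ are symmetric idempotents, i.e.\ orthogonal projections. Explicitly, for $x\in\ker(T-I)$ and $y\in\ker(T+I)$,
\[
\langle x,y\rangle=\langle Tx,y\rangle=\langle x,Ty\rangle=-\langle x,y\rangle,
\]
so $\langle x,y\rangle=0$. Each subspace is $T$-invariant and $T$ acts on it by the scalar $\pm1$. Therefore each is a direct sum of copies of a single irreducible one-dimensional representation (trivial, respectively sign), which is exactly the meaning of isotypic.

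Finally, we can record the concrete descriptions. Writing $x=(a,b,c)$ with $a,c\in\mathbb{R}^N$ and $b\in\mathbb{R}$, the symmetric component $(\mathbb{R}^{\widetilde V})^+$ consists of the vectors $(a,b,a)$ and has dimension $N+1$. The anti-symmetric component $(\mathbb{R}^{\widetilde V})^-$ consists of the vectors $(a,0,-a)$ and has dimension $N$. The dimensions sum to $2N+1$, which gives an independent check of the decomposition.

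No step is a real obstacle. The only point needing care is to justify orthogonality through the symmetry of $T$, and not merely the directness of the sum.
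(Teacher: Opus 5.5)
Your proposal is correct and follows essentially the same route as the paper: the projections $P_\pm=\tfrac12(I\pm T)$, identification of their images with $\ker(T\mp I)$, and orthogonality via $T^\top=T$ through the computation $\langle x,y\rangle=\langle Tx,y\rangle=\langle x,Ty\rangle=-\langle x,y\rangle$. The paper additionally opens with a minimal-polynomial argument for the diagonalizability of $T$. Your projection identities already make that step redundant, and your explicit remark that $T$ acts by the trivial and sign characters on the two summands justifies the word ``isotypic'' more directly than the paper does.
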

	
	\begin{proof}
		Since $\tau$ is an involution, the linear operator satisfies $T^2 = I$. The minimal polynomial of $T$ must therefore divide $x^2 - 1 = (x-1)(x+1)$. Moreover, since $\mathrm{char}(\mathbb{R}) =0 \neq 2,$ the roots are distinct. This guarantees that $T$ is diagonalizable and that $\mathrm{Spec}(T)\subseteq\{+1,-1\}$. In fact, since $\tau$ fixes the middle vertex and swaps $N$ pairs of vertices,
		\[
		\dim\ker(T-I)=N+1,\quad \dim\ker(T+I)=N.
		\]
		So, $\mathrm{Spec}(T)=\{+1,-1\}$ for $N\geq 1$.		
		Consequently, the vector space splits exactly into the direct sum of the respective eigenspaces. We define the $\mathbb{Z}_2$-equivariant orthogonal projection operators as:
		\begin{equation}
			P_{\pm} = \frac{1}{2}(I \pm T).
		\end{equation}
		It is straightforward to verify that 
		\[
		P_{\pm}^2 = P_{\pm}, \quad P_+ P_- = 0, \text{ and } P_+ + P_- = I.
		\]
		The images of these projectors yield the isotypic components: 
		\[
		\mathrm{Im}(P_+) = \ker(T-I) = (\mathbb{R}^{\widetilde{V}})^+, \quad \mathrm{Im}(P_-) = \ker(T+I) = (\mathbb{R}^{\widetilde{V}})^-.
		\] 
		
		Moreover, $T$ is symmetric, since its block form satisfies $T^\top=T$. Hence, the eigenspaces corresponding to the distinct eigenvalues $+1$ and $-1$ are orthogonal. Indeed, if $x\in\ker(T-I)$ and $y\in\ker(T+I)$, then
		\[
		\langle x,y\rangle
		=\langle Tx,y\rangle
		=\langle x,T^\top y\rangle
		=\langle x,Ty\rangle
		=-\langle x,y\rangle,
		\]
		showing that $\langle x,y\rangle=0$. 	\qedhere
	\end{proof}

	Using the permutation matrix $T$, we establish a characterization of the anti-symmetric isotypic subspace of $\mathbb{R}^{\widetilde{V}}.$
	\begin{lemma}
		By partitioning the vectors in $\mathbb{R}^{\widetilde{V}}$ into block form corresponding to the lower copy, the middle vertex, and the upper copy, the anti-symmetric isotypic subspace can be explicitly expressed as:
		\begin{equation}
			(\mathbb{R}^{\widetilde{V}})^- = \{(v, 0, -v) : v \in \mathbb{R}^N\}.
		\end{equation} 
	\end{lemma}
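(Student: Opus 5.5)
We compute $\ker(T+I)$ directly from the block form \eqref{eq:matrixT}. Write an arbitrary vector $x\in\mathbb{R}^{\widetilde{V}}$ in the block partition lower copy / middle vertex / upper copy as $x=(u,c,w)$ with $u,w\in\mathbb{R}^N$ and $c\in\mathbb{R}$. Then the block multiplication gives
\[
Tx=(w,\,c,\,u),\qquad (T+I)x=(u+w,\;2c,\;u+w).
\]
By Proposition~\ref{isotypicdecomp}, $(\mathbb{R}^{\widetilde{V}})^-=\ker(T+I)$. So we only need to solve $(T+I)x=0$.

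The equation $(T+I)x=0$ amounts to $u+w=0$ and $2c=0$. The first condition gives $w=-u$. The second gives $c=0$, since we are working over $\mathbb{R}$ where $2\neq 0$. Hence every anti-symmetric vector has the form $(v,0,-v)$ with $v=u$. Conversely, for any $v\in\mathbb{R}^N$ we have $T(v,0,-v)=(-v,0,v)=-(v,0,-v)$, so such a vector lies in $\ker(T+I)$. The two inclusions give the stated equality.

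As a consistency check, the map $v\mapsto(v,0,-v)$ is injective and linear, so the subspace has dimension $N$. This agrees with $\dim\ker(T+I)=N$ from the proof of Proposition~\ref{isotypicdecomp}.

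No step here is a genuine obstacle. The only point needing care is that the middle coordinate is forced to vanish, because $\tau$ fixes the vertex $N+1$ and $\operatorname{char}\mathbb{R}\neq 2$.
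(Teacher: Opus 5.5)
Your proof is correct and takes essentially the same approach as the paper. Both write $x$ in lower/middle/upper block form, use $Tx=(x_U,\alpha,x_L)$, impose $Tx=-x$ (you use the equivalent $(T+I)x=0$) to force the middle coordinate to vanish and the upper block to equal minus the lower block, and then verify the reverse inclusion directly; your dimension check against $\dim\ker(T+I)=N$ is a harmless extra consistency check.
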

	
	\begin{proof}
		Denote $x \in \mathbb{R}^{\widetilde{V}}$ by $(x_L, \alpha, x_U)^T \in \mathbb{R}^N \oplus \mathbb{R} \oplus \mathbb{R}^N$, where $x_L$ corresponds to the lower copy of vertices, $\alpha$ to the middle vertex, and $x_U$ to the upper copy. The permutation matrix $T$ which represents the involution $\tau$ acts on $x$ by swapping the upper and lower copies while fixing the middle vertex:
		\begin{equation}\label{eq:Tswap1}
			T(x_L, \alpha, x_U)^\top = (x_U, \alpha, x_L)^\top.
		\end{equation} 
		Since $\mathrm{char}(\mathbb{R}) \neq 2,$ define 
		\[
		x_+=\frac{1}{2}(x+Tx), \quad x_-=\frac{1}{2}(x-Tx).
		\]
		By $T^2=I,$ we have
		\[
		Tx_- = T\left(\frac{1}{2}(x - Tx) \right) =  \frac{1}{2}(Tx - T^2x) = - \frac{1}{2}(x - Tx)=-x_-
		\] 
		or equivalently, $x_- \in \ker(T+I);$ similarly, $Tx_+=x_+$ or equivalently, $x_+ \in \ker(T-I).$ By definition, $x\in(\mathbb{R}^{\widetilde{V}})^-$ if and only if	$x\in\ker(T+I)$, or equivalently, $Tx=-x$.

		Combining with \eqref{eq:Tswap1} yields the necessary condition:
		\begin{equation}
			(x_U, \alpha, x_L)^\top = -(x_L, \alpha, x_U)^\top = (-x_L, -\alpha, -x_U)^\top.
		\end{equation}
		This strict equality forces $\alpha = -\alpha,$ so $\alpha = 0$, and $x_U = -x_L$. Let $v = x_L \in \mathbb{R}^N$, then any vector in $(\mathbb{R}^{\widetilde{V}})^-$ must be of the form $(v, 0, -v)^\top$. Conversely, if $x=(v,0,-v)^\top$, then $Tx=(-v,0,v)^\top=-x.$ Hence, $x\in\ker(T+I)=(\mathbb{R}^{\widetilde{V}})^-$.
	\end{proof}

	\begin{theorem}\label{orthogonalequiv}		
		Let  $\widetilde{G}=(\widetilde{V},\widetilde{E})$ be the A\c{c}{\i}kme\c{s}e lift of a self-loop graph $G_S$ of order $N$ and $\sigma=|S|$. Let $s$ be the loop vector. 		
	\begin{enumerate}[(i)]	
		\item By partitioning the vector space $\mathbb{R}^{\widetilde{V}}$  into $\mathbb{R}^N \oplus \mathbb{R} \oplus \mathbb{R}^N$, define $U_-$ and $U_+$ respectively by
		\begin{equation}\label{eq:U+-}
			U_- = 
			\begin{pmatrix} 
				\frac{1}{\sqrt{2}} I_N \\ 
				0_{1\times N} \\ 
				-\frac{1}{\sqrt{2}} I_N 
				\end{pmatrix} \in \mathbb{R}^{(2N+1) \times N}, \quad
			U_+ = 
			\begin{pmatrix}
				0_{N\times 1} & \frac{1}{\sqrt{2}}I_N\\
				1 & 0_{1\times N}\\
				0_{N\times 1} & \frac{1}{\sqrt{2}}I_N
			\end{pmatrix} \in \mathbb{R}^{(2N+1) \times (N+1)}.
		\end{equation}
		Then, $U = (U_-, U_+)$ is an orthogonal matrix satisfying $U^\top U = I_{2N+1}$. Furthermore, the column space of $U_-$ (resp. $U_+$) forms an orthonormal basis for $(\mathbb{R}^{\widetilde{V}})^-$ (resp. $(\mathbb{R}^{\widetilde{V}})^+$). 
		\item  Both isotypic components $(\mathbb{R}^{\widetilde{V}})^-$ and $(\mathbb{R}^{\widetilde{V}})^+$ are invariant subspaces of the Laplacian $\mathcal{L}(\widetilde{G})$. Moreover, on the anti-symmetric subspace, the restricted operator is orthogonally equivalent to the base Laplacian:
		\begin{equation}\label{eq:unitaryeq1}
			\mathcal{L}(\widetilde{G})\big|_{(\mathbb{R}^{\widetilde{V}})^-} \cong_{\mathrm{orth}} \mathcal{L}(G_S).
		\end{equation}
		On the symmetric subspace, the restricted operator is orthogonally equivalent to the $(N+1) \times (N+1)$ symmetric matrix $M_{sym}$:
		\begin{equation}\label{eq:unitaryeq2}
			\mathcal{L}(\widetilde{G})\big|_{(\mathbb{R}^{\widetilde{V}})^+} \cong_{\mathrm{orth}} M_{sym} := 
			\begin{pmatrix} 
				2\sigma & -\sqrt{2} \textbf{1}^\top_\sigma \mathcal{S} \\ 
				-\sqrt{2} \mathcal{S}^\top \textbf{1}_\sigma & \mathcal{L}(G_S) 
			\end{pmatrix},
		\end{equation}
		wherein $\mathcal{L}(G_S)$ forms an $N \times N$ principal submatrix.
		\end{enumerate}
	\end{theorem}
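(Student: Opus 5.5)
The plan is a direct block-matrix computation that uses the explicit form \eqref{eq:LaplacianLift1} of $\mathcal{L}(\widetilde{G})$ and the characterisation of $(\mathbb{R}^{\widetilde{V}})^-$ from the preceding lemma.

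For part (i), I will compute $U_-^\top U_-=\tfrac12 I_N+\tfrac12 I_N=I_N$. For $U_+^\top U_+$, the diagonal block for the middle column is $1$, the lower-right block is $\tfrac12 I_N+\tfrac12 I_N=I_N$, and the off-diagonal block vanishes because the middle column has zeros in both copies. For $U_-^\top U_+$, the middle column contributes $0$ because $U_-$ has a zero middle row, and the remaining block is $\tfrac12 I_N-\tfrac12 I_N=0$. Hence $U^\top U=I_{2N+1}$, and since $U$ is square, $U$ is orthogonal. Next, $TU_-=-U_-$ and $TU_+=U_+$, by \eqref{eq:Tswap1}. So the columns of $U_\pm$ lie in $\ker(T\mp I)$. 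They are orthonormal and number $N$ and $N+1$, which are exactly the dimensions computed in Proposition~\ref{isotypicdecomp}, so they form orthonormal bases of these kernels.

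For part (ii), invariance follows from the commutation $T\mathcal{L}(\widetilde{G})T=\mathcal{L}(\widetilde{G})$. I would check this either blockwise from \eqref{eq:LaplacianLift1}, since conjugating by $T$ swaps the first and third block rows and columns and the matrix is symmetric under that swap, or abstractly, because $\tau$ is an automorphism by the preceding lemma. Commutation gives $\mathcal{L}(\widetilde{G})\ker(T\pm I)\subseteq\ker(T\pm I)$. The restrictions in the orthonormal bases are then $U_\mp^\top\mathcal{L}(\widetilde{G})U_\mp$, and these are what I will compute. First, $\mathcal{L}(\widetilde{G})U_-=\tfrac1{\sqrt2}(\mathcal{L}(G_S),\,-s^\top+s^\top,\,-\mathcal{L}(G_S))^\top=U_-\mathcal{L}(G_S)$, which already appears in the paper's computation of $\mathcal{L}(\widetilde{G})(v,0,-v)^\top$. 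This gives $U_-^\top\mathcal{L}(\widetilde{G})U_-=\mathcal{L}(G_S)$. Second, I will compute $\mathcal{L}(\widetilde{G})U_+$ column by column:
\begin{itemize}
\item the middle column gives $(-s,\,2\sigma,\,-s)^\top$;
\item the other columns give $\tfrac1{\sqrt2}(\mathcal{L}(G_S),\,-2s^\top,\,\mathcal{L}(G_S))^\top$.
\end{itemize}
Multiplying on the left by $U_+^\top$ gives the entries of $M_{sym}$:
\begin{itemize}
\item the $(1,1)$ entry is $2\sigma$;
\item the off-diagonal blocks are $\tfrac1{\sqrt2}(-s-s)=-\sqrt2\,s$ and $-\sqrt2\,s^\top$;
\item the lower block is $\tfrac12(\mathcal{L}(G_S)+\mathcal{L}(G_S))=\mathcal{L}(G_S)$.
\end{itemize}
Using $s=\mathcal{S}^\top\mathbf{1}_\sigma$, this matrix is exactly $M_{sym}$.

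Finally, I will combine the two restrictions. Since $U$ is orthogonal and both subspaces are invariant, $U^\top\mathcal{L}(\widetilde{G})U=\mathcal{L}(G_S)\oplus M_{sym}$; the cross blocks vanish by invariance and orthogonality. This gives both orthogonal equivalences \eqref{eq:unitaryeq1} and \eqref{eq:unitaryeq2}. That $\mathcal{L}(G_S)$ is a principal submatrix of $M_{sym}$ is immediate from its form. There is no real obstacle here; the only point needing care is the sign and the $\sqrt2$ bookkeeping in the symmetric block.
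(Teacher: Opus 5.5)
Your proposal is correct and follows the paper's proof essentially step for step. Part (i) uses the same orthonormality checks, $TU_\pm=\pm U_\pm$ and the dimension count, and part (ii) uses the same block computation yielding $U^\top\mathcal{L}(\widetilde{G})U=\mathrm{diag}(\mathcal{L}(G_S),M_{sym})$; the only cosmetic difference is that you get invariance from $T\mathcal{L}(\widetilde{G})T=\mathcal{L}(\widetilde{G})$ and deduce the vanishing cross blocks, whereas the paper computes the full product $U^\top\mathcal{L}(\widetilde{G})U$ and reads invariance off its zero off-diagonal blocks.
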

	
	\begin{proof}
		We first show (i). It is straightforward to verify that
		\[
		U_-^\top U_- = I_N, \quad U_+^\top U_+ = I_{N+1}, \quad  U_-^\top U_+ = 0.
		\]
		Thus, the matrix $U = (U_-, U_+)$ satisfies $U^\top U = I_{2N+1},$ proving it is an orthogonal matrix. Let $T$ be the permutation matrix associated with the involution $\tau$ on $\widetilde{V}$. By using the matrix form \eqref{eq:matrixT}, we verify the action of $T$ on the basis matrices.
		For $U_-$, we have
		\[
		T U_- = 
		\begin{pmatrix} 
			0   & 0 & I_N \\ 
			0   & 1 & 0 \\ 
			I_N & 0 & 0 
			\end{pmatrix} 
			\begin{pmatrix} 
			\frac{1}{\sqrt{2}} I_N \\ 
			0 \\ 
			-\frac{1}{\sqrt{2}} I_N 
			\end{pmatrix} 
			= 
			\begin{pmatrix} 
			-\frac{1}{\sqrt{2}} I_N \\ 
			0 \\ 
			\frac{1}{\sqrt{2}} I_N 
			\end{pmatrix} 
			= -U_-.
		\] Thus, every vector in the column space of $U_-$ lies in $\ker(T + I_{2N+1})$, which is exactly $(\mathbb{R}^{\widetilde{V}})^-$. Since $\text{dim}((\mathbb{R}^{\widetilde{V}})^-) = N$ and $U_-$ has $N$ linearly independent columns, it forms an orthonormal basis for $(\mathbb{R}^{\widetilde{V}})^-$. On the other hand, for $U_+,$ we obtain
		\[
		T U_+ 
		= \begin{pmatrix} 0 & 0 & I_N \\ 0 & 1 & 0 \\ I_N & 0 & 0 \end{pmatrix} \begin{pmatrix} 0 & \frac{1}{\sqrt{2}} I_N \\ 1 & 0 \\ 0 & \frac{1}{\sqrt{2}} I_N \end{pmatrix}
		= \begin{pmatrix} 0 & \frac{1}{\sqrt{2}} I_N \\ 1 & 0 \\ 0 & \frac{1}{\sqrt{2}} I_N \end{pmatrix} = U_+.
		\]
		Thus, every vector in the column space of $U_+$ lies in $\ker(T - I_{2N+1})=(\mathbb{R}^{\widetilde{V}})^+$. Using a similar argument as $U_-,$ we conclude that the column space of $U_+$ forms an exact orthonormal basis for $(\mathbb{R}^{\widetilde{V}})^+$.
				
		For (ii),  we verify that: with $\mathcal{L}(\widetilde{G})$ from \eqref{eq:LaplacianLift1},
		\begin{align*}
			\mathcal{L}(\widetilde{G})U 
			&= 
			\begin{pmatrix} 
				\mathcal{L}(G_S) & -s  & 0 \\ 
				-s^\top & 2\sigma & -s^\top \\ 
				0 & -s  & \mathcal{L}(G_S) 
			\end{pmatrix}
			\begin{pmatrix} 
				\frac{1}{\sqrt{2}} I_N & 0 & \frac{1}{\sqrt{2}} I_N \\
				 0 & 1 & 0\\ 
				 -\frac{1}{\sqrt{2}} I_N  &  0 & \frac{1}{\sqrt{2}} I_N
			\end{pmatrix} \\
			&= 
			\begin{pmatrix} 
				\frac{1}{\sqrt{2}} \mathcal{L}(G_S) & -s & \frac{1}{\sqrt{2}} \mathcal{L}(G_S)  \\ 
				0 & 2\sigma & -\sqrt{2} s^\top\\ 
				-\frac{1}{\sqrt{2}} \mathcal{L}(G_S)& -s& \frac{1}{\sqrt{2}} \mathcal{L}(G_S) 
			\end{pmatrix},
		\end{align*}	
		and thus,
			\begin{align*}
			U^\top \mathcal{L}(\widetilde{G})U 
			&= 
			\begin{pmatrix} 
				\frac{1}{\sqrt{2}} I_N & 0 & -\frac{1}{\sqrt{2}} I_N \\
				0 & 1 & 0\\ 
				\frac{1}{\sqrt{2}} I_N  &  0 & \frac{1}{\sqrt{2}} I_N
			\end{pmatrix}
				\begin{pmatrix} 
				\frac{1}{\sqrt{2}} \mathcal{L}(G_S) & -s & \frac{1}{\sqrt{2}} \mathcal{L}(G_S)  \\ 
				0 & 2\sigma & -\sqrt{2} s^\top\\ 
				-\frac{1}{\sqrt{2}} \mathcal{L}(G_S)& -s& \frac{1}{\sqrt{2}} \mathcal{L}(G_S) 
			\end{pmatrix}\\
			&= 
			\begin{pmatrix} 
				\mathcal{L}(G_S) & 0 & 0  \\ 
				0 & 2\sigma & -\sqrt{2} s^\top \\ 
				0 & -\sqrt{2}s& \mathcal{L}(G_S) 
			\end{pmatrix}
		\end{align*}	
	This matrix is block-diagonal with respect to the basis partition $U = (U_-, U_+)$, with the off-diagonal blocks connecting the two subspaces evaluating to zero. This implies that both isotypic components, $(\mathbb{R}^{\widetilde{V}})^-$ and $(\mathbb{R}^{\widetilde{V}})^+$, are invariant subspaces of $\mathcal{L}(\widetilde{G})$. Furthermore, the upper-left $N \times N$ block and middle-bottom right $(N+1) \times (N+1)$ block correspond to the action of $\mathcal{L}(\widetilde{G})$ restricted to the anti-symmetric and symmetric basis, respectively:
	\begin{align*}
		U^\top \mathcal{L}(\widetilde{G})U 
		&=	\begin{pmatrix} 
			U^\top_- \mathcal{L}(\widetilde{G})U_- & 0\\ 
			0& U^\top_+ \mathcal{L}(\widetilde{G})U_+
		    \end{pmatrix} \\
		&=  \begin{pmatrix} 
			\mathcal{L}(G_S) & 0\\ 
			0& M_{sym}
			\end{pmatrix} \cong_{\mathrm{orth}}
		\begin{pmatrix} 
			\mathcal{L}(\widetilde{G})\big|_{(\mathbb{R}^{\widetilde{V}})^-} & 0\\ 
			0& \mathcal{L}(\widetilde{G})\big|_{(\mathbb{R}^{\widetilde{V}})^+}
		\end{pmatrix} 
	\end{align*}
	which establish the orthogonal equivalences \eqref{eq:unitaryeq1} and \eqref{eq:unitaryeq2}.	
	\end{proof}

	\begin{remark}\label{REMARKorthogonalequiv}
	Following the orthogonal matrix $U= (U_-,U_+)$ in \eqref{eq:U+-}, it is a useful observation that under the same orthogonal decomposition, the matrix $T$ \eqref{eq:matrixT} satisfies
	\begin{equation}
	U^\top T U = 
	\begin{pmatrix} \label{eq:matrixT2}
		-I_N & 0 & 0 \\ 
		0 & 1 & 0 \\ 
		0 & 0 & I_N 
	\end{pmatrix} = 
	\begin{pmatrix} 
		-I_N & 0  \\ 
		0 & I_{N+1} 
	\end{pmatrix}, 
	\end{equation}
	which shows that $T$ is orthogonally similar to $\mathrm{diag}(-I_N, I_{N+1}).$  	
	\end{remark}	
	
	It is an immediate consequence from Theorem \ref{orthogonalequiv} to obtain the following result.
	
	\begin{theorem}
		\label{spectrumsum}
		Let $\widetilde{G}$ be the Aç{\i}kme\c{s}e lift of a self-loop graph $G_S.$ Let $\mathcal{L}(\widetilde{G})$  and $\mathcal{L}(G_S)$ be their Laplacians, respectively.  Let $(\mathbb{R}^{\widetilde{V}})^+$ and $(\mathbb{R}^{\widetilde{V}})^-$  be the symmetric and anti-symmetric isotypic subspaces of $\mathbb{R}^{\widetilde{V}},$ respectively. Then, with multiplicities, the spectrum of $\mathcal{L}(\widetilde{G})$ decomposes as		
		\begin{equation}\label{eq:spectrasplit}
			\mathrm{Spec}(\mathcal{L}(\widetilde{G}))
			=	\mathrm{Spec}(\mathcal{L}(G_S)) \uplus \mathrm{Spec}(M_{sym}),
		\end{equation}
		where $\uplus$ denotes multiset union.
	\end{theorem}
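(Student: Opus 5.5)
The plan is to read the spectral decomposition directly off the block-diagonalization established in Theorem~\ref{orthogonalequiv}. By part~(i) of that theorem, $U=(U_-,U_+)$ is an orthogonal matrix. The computation in the proof of part~(ii) gives
\[
U^\top \mathcal{L}(\widetilde{G})\,U=\begin{pmatrix}\mathcal{L}(G_S) & 0\\ 0 & M_{sym}\end{pmatrix}.
\]
Since $U^\top=U^{-1}$, the matrix $\mathcal{L}(\widetilde{G})$ is similar to this block-diagonal matrix. Similar matrices share the same characteristic polynomial, so the eigenvalues of $\mathcal{L}(\widetilde{G})$, counted with algebraic multiplicity, coincide with those of the block-diagonal matrix.

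The next step is to factor the characteristic polynomial of a block-diagonal matrix:
\[
\det\bigl(xI_{2N+1}-\mathcal{L}(\widetilde{G})\bigr)=\det\bigl(xI_N-\mathcal{L}(G_S)\bigr)\cdot\det\bigl(xI_{N+1}-M_{sym}\bigr).
\]
The roots of a product of polynomials, counted with multiplicity, form the multiset union of the roots of the factors. This yields \eqref{eq:spectrasplit} with multiplicities.

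As a complementary viewpoint, I would note that all three matrices are real symmetric. Hence algebraic and geometric multiplicities agree, and the statement can equally be phrased through eigenspaces. Each eigenspace of $\mathcal{L}(\widetilde{G})$ splits as the direct sum of its intersections with the invariant subspaces $(\mathbb{R}^{\widetilde{V}})^-$ and $(\mathbb{R}^{\widetilde{V}})^+$. The restrictions to these subspaces are orthogonally equivalent to $\mathcal{L}(G_S)$ and $M_{sym}$ respectively, via the isometries given by $U_-$ and $U_+$. The dimensions therefore add, since $N+(N+1)=2N+1$.

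There is no real obstacle here; the only point needing care is that the equality is as multisets. An eigenvalue common to $\mathcal{L}(G_S)$ and $M_{sym}$ must be counted with the sum of its two multiplicities, which the determinant factorization handles automatically.
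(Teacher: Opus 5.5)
Your proposal is correct and is essentially the paper's argument: the paper gives no separate proof and calls this theorem an immediate consequence of the block diagonalization $U^\top\mathcal{L}(\widetilde{G})U=\mathrm{diag}(\mathcal{L}(G_S),M_{sym})$ from Theorem~\ref{orthogonalequiv}. Your factorization of the characteristic polynomial spells out the multiset bookkeeping that the paper leaves implicit.
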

	
	We close this section by proving the spectral result on the recovering of spectrum of self-loop graphs from that of its A\c{c}{\i}kme\c{s}e lift, for which the latter provides a possible solution to the characterization problem aforementioned in Introduction.

	\begin{theorem}\label{mainthm1}
		Let $G_S$ be a self-loop graph of order $N$ with the eigenvalues of
		$\mathcal{L}(G_S)$ 
		\[
		\lambda_1\ge \lambda_2\ge \cdots \ge \lambda_N.
		\]
		Let $\widetilde{G}$ be the A\c{c}{\i}kme\c{s}e lift graph of $G_S$ with the eigenvalues of $\mathcal{L}(\widetilde{G})$
		\[\mu_1\ge \mu_2\ge \cdots \ge \mu_{2N+1}.\]
		Then, it holds that
		\[
		\lambda_i=\mu_{2i},\quad i=1,2,\dots,N.
		\] 
		Moreover, for fixed $c\in\mathbb{R}$, if $\displaystyle \delta_c(A)=\max_{\alpha\in\mathrm{Spec}(A)}|\alpha-c|$
		denotes the maximum spectral deviation of $A$ relative to $c$, then, 
		\[
		\delta_c(\mathcal{L}(\widetilde{G}))
		= \delta_c(M_{sym})	\ge	\delta_c(\mathcal{L}(G_S)).
		\]
	\end{theorem}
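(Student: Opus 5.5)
The plan is to combine the multiset splitting of Theorem~\ref{spectrumsum} with Cauchy's interlacing theorem, applied to the embedding of $\mathcal{L}(G_S)$ as a principal submatrix of $M_{sym}$. Let $\nu_1\ge\nu_2\ge\cdots\ge\nu_{N+1}$ be the eigenvalues of the real symmetric $(N+1)\times(N+1)$ matrix $M_{sym}$ in \eqref{eq:unitaryeq2}. By Theorem~\ref{orthogonalequiv}, $\mathcal{L}(G_S)$ is the $N\times N$ principal submatrix of $M_{sym}$ obtained by deleting the first row and column. Cauchy interlacing therefore gives
\[
\nu_1\ge\lambda_1\ge\nu_2\ge\lambda_2\ge\cdots\ge\nu_N\ge\lambda_N\ge\nu_{N+1}.
\]

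Next I would form the interleaved sequence $\nu_1,\lambda_1,\nu_2,\lambda_2,\dots,\nu_N,\lambda_N,\nu_{N+1}$ of length $2N+1$. By the displayed chain it is nonincreasing. As a multiset it is $\mathrm{Spec}(\mathcal{L}(G_S))\uplus\mathrm{Spec}(M_{sym})$, which by Theorem~\ref{spectrumsum} equals $\mathrm{Spec}(\mathcal{L}(\widetilde{G}))$ with multiplicities. A finite multiset of reals has exactly one nonincreasing enumeration, so this sequence must be $\mu_1\ge\cdots\ge\mu_{2N+1}$ entrywise. Reading off the even positions gives $\mu_{2i}=\lambda_i$; as a by-product, $\mu_{2i-1}=\nu_i$.

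The only delicate point, and the one I expect to need the most care, is this identification when eigenvalues coincide. Equal values make the labelling of individual eigenvalues ambiguous, but not the value in each position. I would argue at the level of values: $\mu_k$ is defined as the $k$-th entry of the sorted multiset, and the interleaved list is already sorted. This avoids any bookkeeping of which copy of a repeated eigenvalue came from which block.

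For the deviation statement, I would first note that $x\mapsto|x-c|$ is convex. Hence for a symmetric matrix $A$ the maximum in $\delta_c(A)$ is attained at $\lambda_{\max}(A)$ or $\lambda_{\min}(A)$, so
\[
\delta_c(A)=\max\{|\lambda_{\max}(A)-c|,\,|\lambda_{\min}(A)-c|\}.
\]
By the first part, $\lambda_{\max}(\mathcal{L}(\widetilde G))=\mu_1=\nu_1$ and $\lambda_{\min}(\mathcal{L}(\widetilde G))=\mu_{2N+1}=\nu_{N+1}$. This gives $\delta_c(\mathcal{L}(\widetilde G))=\delta_c(M_{sym})$. Interlacing also gives $[\lambda_N,\lambda_1]\subseteq[\nu_{N+1},\nu_1]$, so every eigenvalue of $\mathcal{L}(G_S)$ lies in an interval whose endpoints realise $\delta_c(M_{sym})$. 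By convexity, $|\lambda_i-c|\le\delta_c(M_{sym})$ for every $i$, which yields $\delta_c(M_{sym})\ge\delta_c(\mathcal{L}(G_S))$.
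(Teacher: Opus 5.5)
Your proposal is correct and matches the paper's proof step for step. You use Cauchy interlacing for $\mathcal{L}(G_S)$ as a principal submatrix of $M_{sym}$, and the multiset splitting of Theorem~\ref{spectrumsum} then forces $\mu_{2i}=\lambda_i$ and $\mu_{2j-1}=\nu_j$. The extreme eigenvalues $\nu_1,\nu_{N+1}$ give the deviation identity, and your interval-containment argument for $\delta_c(M_{sym})\ge\delta_c(\mathcal{L}(G_S))$ just spells out the step the paper leaves as ``it follows''.
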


	\begin{proof}
		Let	$\mathrm{Spec}(\mathcal{L}(G_S))=\{\lambda_i\}_{i=1}^N,$	$\mathrm{Spec}(M_{sym})=\{\nu_j\}_{j=1}^{N+1},$ and $\mathrm{Spec}(\mathcal{L}(\widetilde{G})) = \left\{ \mu_k\right\}_{k=1}^{2N+1}$ be ordered non-increasingly. Since $\mathcal{L}(G_S)$ is an $N\times N$ principal submatrix of $M_{sym}$, the Interlacing Theorem \cite[Thm 1.3.15]{cvetkovic2010intro} gives 
		\begin{equation}\label{eq:specinterlaced}
		\nu_1 \ge \lambda_1 \ge \nu_2 \ge \lambda_2 \ge \dots \ge \nu_N \ge \lambda_N \ge \nu_{N+1}
		\end{equation}
		and this provides $2N+1$ eigenvalues already sorted in non-increasing order. By comparing with the spectrum of $\mathcal{L}(\widetilde{G}),$  we must have the even-indexed eigenvalues $\mu_{2i}=\lambda_i$ for $i=1,\dots,N,$ and the odd-indexed eigenvalues $\mu_{2j-1}=\nu_j$ for $j=1,\dots,N+1.$
		
		For the second claim, observe that from \eqref{eq:specinterlaced} that the smallest and largest eigenvalues of $\mathcal{L}(\widetilde{G})$ are $\nu_{N+1}$ and $\nu_1$, respectively. Hence, for every $c\in\mathbb{R}$,
		\[
		\delta_c(\mathcal{L}(\widetilde{G}))
		=\max\{|\nu_1-c|,|\nu_{N+1}-c|\}
		=\delta_c(M_{sym}).
		\]
		It follows that $\delta_c(M_{sym})\ge \delta_c(\mathcal{L}(G_S)).$
	\end{proof}

	\section{Spectral Moments and Twisted Inequalities}
	\label{Sect.4}

	In this section, we study the Laplacian analogue of spectral and twisted moments, as motivated in \cite{lim2025closedwalk}. By utilizing the orthogonal decomposition in Theorem~\ref{orthogonalequiv} we first determine the first three Laplacian spectral moments of the lift. Then, we study relative and generalised twisted moments and formulate their relation with the two $\mathbb Z_2$-isotypic components.
	
\begin{theorem}\label{thm:spectral_moments}
	The Laplacian spectral moments
	\[
	M_k(\mathcal{L}(\widetilde{G}))
	=
	\mathrm{Tr}(\mathcal{L}(\widetilde{G})^k)
	\]
	of the A\c{c}{\i}kme\c{s}e lift for the first three orders are as follows:
	\begin{align}
		M_1(\mathcal{L}(\widetilde{G}))
		&=
		2M_1(\mathcal{L}(G_S))+2\sigma,
		\label{eq:m1}\\
		M_2(\mathcal{L}(\widetilde{G}))
		&=
		2M_2(\mathcal{L}(G_S))+4\sigma^2+4\sigma,
		\label{eq:m2}\\
		M_3(\mathcal{L}(\widetilde{G}))
		&=
		2M_3(\mathcal{L}(G_S))+8\sigma^3+12\sigma^2
		+6s^\top\mathcal{L}(G_S)s,
		\label{eq:m3}
	\end{align}
	where $s$ is the loop vector.
\end{theorem}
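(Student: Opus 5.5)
The plan is to avoid expanding powers of the $(2N+1)\times(2N+1)$ matrix $\mathcal{L}(\widetilde{G})$ directly, and instead use the isotypic decomposition of Theorem~\ref{orthogonalequiv}. Since $U^\top\mathcal{L}(\widetilde{G})U=\mathrm{diag}(\mathcal{L}(G_S),M_{sym})$ with $U$ orthogonal, we have $U^\top\mathcal{L}(\widetilde{G})^kU=\mathrm{diag}(\mathcal{L}(G_S)^k,M_{sym}^k)$ for every $k$. Equivalently, by Theorem~\ref{spectrumsum},
\[
M_k(\mathcal{L}(\widetilde{G}))=M_k(\mathcal{L}(G_S))+\mathrm{Tr}(M_{sym}^k).
\]
So everything reduces to computing $\mathrm{Tr}(M_{sym}^k)$ for $k=1,2,3$. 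Throughout we write $L:=\mathcal{L}(G_S)$ and use $\mathcal{S}^\top\mathbf{1}_\sigma=s$. We also use that $s$ is a $0$--$1$ vector with $\sigma$ ones, so $s^\top s=\sigma$ and $\mathrm{Tr}(ss^\top)=\sigma$.

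To organise the computation, split $M_{sym}=D+B$ into a block-diagonal part and a block-off-diagonal part:
\[
D=\begin{pmatrix}2\sigma&0\\0&L\end{pmatrix},\qquad
B=\begin{pmatrix}0&-\sqrt{2}s^\top\\-\sqrt{2}s&0\end{pmatrix}.
\]
The key structural facts are these. A product of blocks containing an odd number of factors $B$ is block-off-diagonal, so its trace vanishes. Moreover,
\[
B^2=\mathrm{diag}(2s^\top s,\;2ss^\top)=\mathrm{diag}(2\sigma,\;2ss^\top).
\]
The first two traces then follow immediately:
\begin{itemize}
\item For $k=1$: $\mathrm{Tr}(M_{sym})=2\sigma+\mathrm{Tr}(L)$, which gives \eqref{eq:m1}.
\item For $k=2$: $\mathrm{Tr}(M_{sym}^2)=\mathrm{Tr}(D^2)+\mathrm{Tr}(B^2)=4\sigma^2+\mathrm{Tr}(L^2)+2\sigma+2\sigma$, which gives \eqref{eq:m2}.
\end{itemize}

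For $k=3$, expand $(D+B)^3$ into its eight words. The words $BBB$, $DDB$, $DBD$ and $BDD$ contain an odd number of $B$'s and so contribute zero trace. By cyclicity of the trace, the three words $DBB$, $BDB$ and $BBD$ all have trace $\mathrm{Tr}(DB^2)$. Hence
\[
\mathrm{Tr}(M_{sym}^3)=\mathrm{Tr}(D^3)+3\,\mathrm{Tr}(DB^2).
\]
The pieces are $\mathrm{Tr}(D^3)=8\sigma^3+\mathrm{Tr}(L^3)$ and
\[
\mathrm{Tr}(DB^2)=2\sigma\cdot2\sigma+2\,\mathrm{Tr}(Lss^\top)=4\sigma^2+2\,s^\top Ls.
\]
Adding $M_3(L)$ yields $2M_3(L)+8\sigma^3+12\sigma^2+6s^\top Ls$, which is \eqref{eq:m3}.

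The only step requiring care is this third-order bookkeeping: getting the multiplicity $3$ and the factor $2$ from $B^2$ right, and recognising $\mathrm{Tr}(Lss^\top)=s^\top Ls$. I expect no genuine obstacle beyond that.
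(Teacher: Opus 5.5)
Your proposal is correct and follows the paper's route. Like the paper, you use the orthogonal block decomposition of Theorem~\ref{orthogonalequiv} to reduce everything to $\mathrm{Tr}(M_{sym}^k)$ and then compute that trace explicitly. The only difference is bookkeeping: you split $M_{sym}=D+B$ and use parity plus cyclicity to get $\mathrm{Tr}(M_{sym}^3)=\mathrm{Tr}(D^3)+3\,\mathrm{Tr}(DB^2)$, whereas the paper multiplies out the diagonal blocks of $M_{sym}^2$ and $M_{sym}^3$ directly. Both give $8\sigma^3+12\sigma^2+6s^\top\mathcal{L}(G_S)s+M_3(\mathcal{L}(G_S))$.
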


\begin{proof}
	By Theorem \ref{orthogonalequiv}, we know that
	$\mathcal{L}(\widetilde{G}) \cong_{\text{orth}}
	\mathrm{diag}(\mathcal{L}(G_S), M_{sym}).	$
	Since the trace operator is invariant under cyclic permutations, we have
	\begin{equation} \label{eq:laplacianmoment1}
	M_k(\mathcal{L}(\widetilde{G}))=M_k(\mathcal{L}(G_S))+\mathrm{Tr}(M_{sym}^k).
	\end{equation}
	We proceed to compute $\mathrm{Tr}(M_{sym}^k)$ directly from \eqref{eq:unitaryeq2}, where
	\[
	M_{sym}
	=
	\begin{pmatrix}
		2\sigma & -\sqrt{2}s^\top\\
		-\sqrt{2}s & \mathcal{L}(G_S)
	\end{pmatrix}.
	\]
	For $k=1$, the trace is trivially
	\[
	\mathrm{Tr}(M^1_{sym})
	= 2\sigma+\mathrm{Tr}(\mathcal{L}(G_S))
	= 2\sigma+M_1(\mathcal{L}(G_S)),
	\]
	which immediately yields \eqref{eq:m1}.
	
	For $k=2$, by a direct computation, we have
	\[
	M_{sym}^2
	=
	\begin{pmatrix}
	4\sigma^2+2s^\top s	& -2\sigma\sqrt{2}s^\top-\sqrt{2}s^\top\mathcal{L}(G_S) \\
	-2\sigma\sqrt{2}s-\sqrt{2}\mathcal{L}(G_S)s	& 2ss^\top+\mathcal{L}(G_S)^2
	\end{pmatrix}.
	\]
	Since $s$ is the loop vector for $\sigma$ loops, the inner product gives
	$s^\top s=\sigma$. Thus,
	\begin{align}
		\mathrm{Tr}(M_{sym}^2)
		&= (4\sigma^2+2\sigma) + \mathrm{Tr}(2ss^\top+\mathcal{L}(G_S)^2) \nonumber\\
		&=	4\sigma^2+2\sigma + 2\mathrm{Tr}(ss^\top) + M_2(\mathcal{L}(G_S))  \nonumber\\
		&=	4\sigma^2+4\sigma+M_2(\mathcal{L}(G_S)), \label{eq:TrMsym2}
	\end{align}
	where the last equality follows from
	$\mathrm{Tr}(ss^\top)=\mathrm{Tr}(s^\top s)=\sigma$.
	Adding the remaining $M_2(\mathcal{L}(G_S))$ gives \eqref{eq:m2}.
	
	For $k=3$, we evaluate the trace of $M_{sym}^3=M_{sym}^2M_{sym}$.
	Computing the main diagonal blocks gives:
	\begin{align*}
		(M_{sym}^3)_{11}
		&=	2\sigma (4\sigma^2+2\sigma)	+ (-2\sigma\sqrt{2}s^\top-\sqrt{2}s^\top\mathcal{L}(G_S))(-\sqrt{2}s)\\
		&=	8\sigma^3+4\sigma^2	+ 4\sigma (s^\top s) + 2s^\top\mathcal{L}(G_S)s\\
		&=	8\sigma^3+8\sigma^2	+ 2s^\top\mathcal{L}(G_S)s.
	\end{align*}
	For the lower right block $(M_{sym}^3)_{22}$, we obtain:
	\begin{align*}
		(M_{sym}^3)_{22}
		&=(-2\sigma\sqrt{2}s - \sqrt{2}\mathcal{L}(G_S)s)(-\sqrt{2}s^\top) + (2ss^\top+\mathcal{L}(G_S)^2)\mathcal{L}(G_S)\\
		&=4\sigma ss^\top + 2\mathcal{L}(G_S)ss^\top+2ss^\top\mathcal{L}(G_S) + \mathcal{L}(G_S)^3.
	\end{align*}
	Applying the trace operator to $(M_{sym}^3)_{22}$, and using
	$\mathrm{Tr}(\mathcal{L}(G_S)ss^\top)=s^\top\mathcal{L}(G_S)s$, 
	\[
	\mathrm{Tr}((M_{sym}^3)_{22})
	= 4\sigma^2	+ 4s^\top\mathcal{L}(G_S)s + M_3(\mathcal{L}(G_S)).
	\]
	Summing the traces of the two blocks and adding the remaining
	$M_3(\mathcal{L}(G_S))$ gives
	\[
	M_3(\mathcal{L}(\widetilde{G}))
	= 2M_3(\mathcal{L}(G_S)) + 8\sigma^3 + 12\sigma^2 +	6s^\top\mathcal{L}(G_S)s,
	\]
	which yields \eqref{eq:m3}.
\end{proof}

\begin{theorem}\label{thm:zagreb_lift}
	Let $G_S$ be a self-loop graph of order $N$, where $G$ is the underlying simple graph with size $m$. Let $\sigma=\sum_{i=1}^N s_i$ where $s=(s_1,...,s_N)$ is the loop vector. Let $d_i=d_G(v_i).$ Then, the first Zagreb index of the A\c{c}{\i}kme\c{s}e lift is
	\[
	M_1(\widetilde{G})=2M_1(G)+4\sum_{i\in S}d_i+2\sigma+4\sigma^2.
	\]
	Moreover, $\mathrm{Tr}(M_{sym}^2)= \frac12 M_1(\widetilde{G})+2m+4\sigma+2\sigma^2.$
\end{theorem}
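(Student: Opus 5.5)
The plan is to compute both identities directly from vertex degrees in the lift. For the second identity, I would combine that degree count with the trace formula \eqref{eq:TrMsym2} already obtained in the proof of Theorem~\ref{thm:spectral_moments}.

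\textbf{Step 1: degrees in $\widetilde G$.} From the degree matrix $D(\widetilde G)$ displayed after \eqref{eq:LaplacianLift1}:
\begin{itemize}
\item each vertex $i$ of the lower copy has degree $d_i+s_i$;
\item its twin $i+N+1$ in the upper copy also has degree $d_i+s_i$;
\item the middle vertex $N+1$ has degree $2\sigma$.
\end{itemize}
The first Zagreb index is the sum of squared degrees, so
\[
M_1(\widetilde G)=2\sum_{i=1}^N (d_i+s_i)^2+4\sigma^2 .
\]
Since $s_i\in\{0,1\}$ we have $s_i^2=s_i$, and expanding gives
\[
\sum_{i=1}^N (d_i+s_i)^2=M_1(G)+2\sum_{i\in S} d_i+\sigma .
\]
Substituting yields $M_1(\widetilde G)=2M_1(G)+4\sum_{i\in S}d_i+2\sigma+4\sigma^2$, which is the first claim.

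\textbf{Step 2: the trace of $M_{sym}^2$.} By \eqref{eq:TrMsym2}, $\mathrm{Tr}(M_{sym}^2)=4\sigma^2+4\sigma+M_2(\mathcal L(G_S))$. It remains to express $M_2(\mathcal L(G_S))=\mathrm{Tr}(\mathcal L(G_S)^2)$ as the sum of the squares of all entries of the symmetric matrix $\mathcal L(G_S)$.
\begin{itemize}
\item The diagonal entries are $d_i+s_i$, contributing $\sum_i (d_i+s_i)^2$.
\item The off-diagonal entries are $-1$ exactly at the $2m$ ordered adjacent pairs, contributing $2m$.
\end{itemize}
Hence $M_2(\mathcal L(G_S))=\sum_i (d_i+s_i)^2+2m$.

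\textbf{Step 3: eliminate the degree sum.} Step 1 gives $\sum_i (d_i+s_i)^2=\tfrac12 M_1(\widetilde G)-2\sigma^2$. Substituting this into Step 2 gives
\[
\mathrm{Tr}(M_{sym}^2)=4\sigma^2+4\sigma+\tfrac12 M_1(\widetilde G)-2\sigma^2+2m=\tfrac12 M_1(\widetilde G)+2m+4\sigma+2\sigma^2 .
\]

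None of these steps is a real obstacle. The only points needing care are bookkeeping: using $s_i^2=s_i$, and counting each edge twice in the off-diagonal contribution.
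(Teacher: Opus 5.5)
Your proof is correct and follows essentially the same route as the paper: read off the lift degrees ($d_i+s_i$ on both copies, $2\sigma$ at the middle vertex), then combine \eqref{eq:TrMsym2} with a computation of $\mathrm{Tr}(\mathcal{L}(G_S)^2)$. The only cosmetic difference is that you evaluate that trace as a sum of squared entries and substitute $\sum_i (d_i+s_i)^2=\tfrac12 M_1(\widetilde{G})-2\sigma^2$ directly. The paper instead expands $(\mathcal{L}(G)+\mathrm{diag}(s))^2$ into $M_1(G)+2m+2\sum_{i\in S}d_i+\sigma$ and then regroups.
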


\begin{proof}
	In the lift, the lower copy of $v_i$ has degree $d_i+s_i$, and the upper copy has the same degree. The middle vertex is adjacent to the lower and upper copies of every	looped vertex, hence has degree $2\sigma$. It follows that
	\begin{align*}
		M_1(\widetilde{G})
		&= 2\sum_{i=1}^N(d_i+s_i)^2+(2\sigma)^2 \\
		&= 2\sum_{i=1}^Nd_i^2 + 4\sum_{i=1}^Nd_is_i +	2\sum_{i=1}^Ns_i + 4\sigma^2 \\
		&=2M_1(G) + 4\sum_{i\in S}d_i + 2\sigma+4\sigma^2.
	\end{align*}	
	Moreover, since
	$\mathcal{L}(G_S)=\mathcal{L}(G)+\mathrm{diag}(s), $
	we have
	\begin{align*}
		\mathrm{Tr}(\mathcal{L}(G_S)^2)
		&= \mathrm{Tr}(\mathcal{L}(G)^2) + 2\mathrm{Tr}(\mathcal{L}(G)\mathrm{diag}(s))	+ \mathrm{Tr}(\mathrm{diag}(s)^2) \\
		&= M_1(G)+2m + 2\sum_{i\in S}d_i + \sigma 	
	\end{align*}
	Therefore, by \eqref{eq:TrMsym2}, we obtain
	\begin{align*}
		\mathrm{Tr}(M_{sym}^2)
		&=	4\sigma^2+4\sigma+M_1(G)+2m+2\sum_{i\in S}d_i+\sigma \\
		&= 	\frac12M_1(\widetilde{G})+2m+4\sigma+2\sigma^2. \qedhere
	\end{align*} 
\end{proof}

\begin{example}
	Let	$G_S=(K_{a,b})_S$ be a complete bipartite self-loop graph with partite sets $A$ and $B$, where $|A|=a,|B|=b,$ for $a,b\ge 1.$ Suppose that $G_S$ has $\sigma_A$ looped vertices in $A$ and $\sigma_B$ looped vertices in $B$ so that $N=a+b$ and $\sigma=\sigma_A+\sigma_B.$	For the underlying simple graph $G=K_{a,b}$, we have $m=ab $ and it is readily computed that
	\[
	M_1(G) = \sum_{v\in V(G)}d_G(v)^2 = a b^2+b a^2 = ab(a+b), \quad \sum_{i\in S}d_i = b\sigma_A+a\sigma_B
	\]
	because every vertex in $A$ has degree $b$ and every vertex in $B$ has degree $a$. 
	
	By Theorem~\ref{thm:zagreb_lift}, the first Zagreb index of the
	A\c{c}{\i}kme\c{s}e lift is
	\[
	M_1(\widetilde{G})
	= 2ab(a+b) + 4(b\sigma_A+a\sigma_B) + 2\sigma + 4\sigma^2.
	\]
	Consequently,
	\begin{align*}
		\mathrm{Tr}(M_{sym}^2)
		&= \frac12 M_1(\widetilde{G}) + 2m + 4\sigma + 2\sigma^2\\
		&= ab(a+b) + 2(b\sigma_A + a\sigma_B) + \sigma + 2\sigma^2 + 2ab + 4\sigma + 2\sigma^2\\
		&= ab(a+b+2) + 2(b\sigma_A+a\sigma_B) + 5\sigma + 4\sigma^2.
	\end{align*}
\end{example}

We now extend the twisted-moment method to the lifted Laplacian setting. Recall from \cite{lim2025closedwalk} that, for a real symmetric matrix $B$ of order $r$ with eigenvalues $\lambda_1(B),\dots,\lambda_r(B)$, the $M_k$-twisted moment is defined by
\begin{equation}\label{eq:twistedmoment}
	\mathcal{M}_q^k(B)=\sum_{i=1}^r \left| \lambda_i(B)-\frac{M_k(B)}{r}\right|^q.
\end{equation}
In particular, when $k=1$, the center is the mean eigenvalue $\mathrm{Tr}(B)/r.$ We also write $\mathcal{M}_q(B):=\mathcal{M}^1_q(B).$

For the lifted Laplacian, we shall use the following relative notation. For a real symmetric matrix $B$ of order $r$ and a fixed $c\in\mathbb{R}$, define
\begin{equation}\label{eq:twistedmoment2}
	\mathcal{M}_q(B,c) = \sum_{i=1}^r |\lambda_i(B)-c|^q,\qquad q>0.
\end{equation}
For $q=0$, we set $\mathcal{M}_0(B,c)=r.$  Using this, we deduce the following relative form of the twisted-moment ratio inequality, which follows from the same Cauchy-Schwarz argument used in \cite[Theorem~3.4]{lim2025closedwalk}.

\begin{lemma}\label{lem:centered-ratio}
	Let $B$ be a real symmetric matrix of order $r$, and let $c\in\mathbb{R}$. 
	Suppose that $B\neq cI_r$. Then
	\[
	\frac{\mathcal{M}_1(B,c)}{\mathcal{M}_0(B,c)}
	\le
	\frac{\mathcal{M}_2(B,c)}{\mathcal{M}_1(B,c)}
	\le
	\frac{\mathcal{M}_3(B,c)}{\mathcal{M}_2(B,c)}
	\le
	\frac{\mathcal{M}_4(B,c)}{\mathcal{M}_3(B,c)}
	\le \cdots .
	\]
\end{lemma}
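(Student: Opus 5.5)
Set $a_i:=|\lambda_i(B)-c|\ge 0$ for $i=1,\dots,r$, so that $\mathcal{M}_q(B,c)=\sum_{i=1}^r a_i^q$ for every $q\ge 0$. This is consistent with the convention $\mathcal{M}_0(B,c)=r$ if we read $a_i^0=1$ for all $i$, including $a_i=0$. Since $B$ is real symmetric and $B\neq cI_r$, $B-cI_r$ is a nonzero diagonalizable matrix. Hence at least one eigenvalue differs from $c$, so some $a_i>0$ and $\mathcal{M}_q(B,c)>0$ for all $q\ge 0$. Every ratio in the chain is therefore well defined and positive. The claim reduces to
\[
\mathcal{M}_q(B,c)^2\le \mathcal{M}_{q-1}(B,c)\,\mathcal{M}_{q+1}(B,c),\qquad q=1,2,3,\dots,
\]
since dividing by the positive quantity $\mathcal{M}_{q-1}\mathcal{M}_q$ gives $\mathcal{M}_q/\mathcal{M}_{q-1}\le \mathcal{M}_{q+1}/\mathcal{M}_q$. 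Chaining these inequalities gives the full statement.

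Each such inequality follows from the Cauchy--Schwarz inequality applied to the vectors $(a_i^{(q-1)/2})_i$ and $(a_i^{(q+1)/2})_i$:
\[
\Big(\sum_{i=1}^r a_i^{q}\Big)^2=\Big(\sum_{i=1}^r a_i^{(q-1)/2}a_i^{(q+1)/2}\Big)^2\le \sum_{i=1}^r a_i^{q-1}\sum_{i=1}^r a_i^{q+1}.
\]
This is the same argument as in \cite[Theorem~3.4]{lim2025closedwalk}, with the mean eigenvalue replaced by the arbitrary centre $c$.

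The only point needing care is the case $q=1$, where the exponent $q-1=0$ appears. Some $a_i$ may vanish, namely when $c$ is itself an eigenvalue of $B$. We must then interpret $a_i^0$ as $1$ so that $\sum_i a_i^0=r=\mathcal{M}_0(B,c)$. With this convention the factorisation $a_i=a_i^{0}\cdot a_i^{1}$, written in the Cauchy--Schwarz form $a_i^{1/2}\cdot a_i^{1/2}$ paired against $1\cdot a_i$, still holds. The required inequality $\big(\sum_i a_i\big)^2\le r\sum_i a_i^2$ is exactly Cauchy--Schwarz with the all-ones vector. For $q\ge 2$ all exponents are positive, so no convention is needed.

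Once the hypothesis $B\neq cI_r$ is seen to guarantee strictly positive denominators, nothing else is delicate. As a by-product, equality in the $q$-th step holds if and only if the vectors $(a_i^{(q-1)/2})_i$ and $(a_i^{(q+1)/2})_i$ are proportional. For $q\ge 2$ this means all nonzero $a_i$ are equal. For $q=1$ it means all the $a_i$ are equal, i.e. $|\lambda_i(B)-c|$ is constant in $i$.
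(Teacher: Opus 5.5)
Your proof is correct and follows the paper's argument: apply Cauchy--Schwarz to $\bigl(a_i^{(q-1)/2}\bigr)_i$ and $\bigl(a_i^{(q+1)/2}\bigr)_i$ to get $\mathcal{M}_q(B,c)^2\le\mathcal{M}_{q-1}(B,c)\,\mathcal{M}_{q+1}(B,c)$, then use positivity (from $B\neq cI_r$) to divide and chain. You also spell out the $q=1$ step, reading $a_i^0=1$ so that it matches the convention $\mathcal{M}_0(B,c)=r$ and reduces to Cauchy--Schwarz against the all-ones vector; the paper leaves this point implicit.
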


\begin{proof}
	Let $a_i:=|\lambda_i(B)-c|, i=1,\dots,r.$ Then, $a_i\ge 0$ and $\mathcal{M}_q(B,c)=\sum_{i=1}^r a_i^q.$	Since $B\neq cI_r$, at least one $a_i$ is nonzero, and hence $\mathcal{M}_q(B,c)>0$ for every $q>0$.
	
	By Cauchy-Schwarz inequality,
	\[
	\left(\sum_{i=1}^r a_i^q\right)^2
	=
	\left(\sum_{i=1}^r a_i^{\frac{q-1}{2}}a_i^{\frac{q+1}{2}}\right)^2
	\le
	\left(\sum_{i=1}^r a_i^{q-1}\right)
	\left(\sum_{i=1}^r a_i^{q+1}\right).
	\]
	Thus, $\mathcal{M}_q(B,c)^2	\le \mathcal{M}_{q-1}(B,c)\mathcal{M}_{q+1}(B,c),$
	and therefore,
	\[
	\frac{\mathcal{M}_q(B,c)}{\mathcal{M}_{q-1}(B,c)}
	\le
	\frac{\mathcal{M}_{q+1}(B,c)}{\mathcal{M}_q(B,c)}.
	\]
	Applying this successively gives the claimed ratio inequality.
\end{proof}

	In the following corollary, it is straightforward to deduce and obtain a generalization of the well-known classical bound
	\[
	\cE(G) \geq 2\sqrt{2}m\sqrt{\frac{m}{w^{cl}_4(G)}},
	\] 
	cf. \cite[\S 4, eq (11)]{MajKloGut2009} and references therein. The proof is omitted.
\begin{corollary}
	Let $B$ be a real symmetric matrix of order $r$, and let $c\in\mathbb{R}$. 
	Suppose that $B\neq cI_r$. Then, it holds that
	\begin{equation}\label{eq:B_bound}
		\mathcal{M}_1(B,c) \ge \sqrt{\frac{\mathcal{M}_2(B,c)^3}{\mathcal{M}_4(B,c)}}.
	\end{equation}
\end{corollary}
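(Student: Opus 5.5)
We need $\mathcal{M}_1^2\,\mathcal{M}_4 \ge \mathcal{M}_2^3$, which is exactly the displayed inequality \eqref{eq:B_bound} after squaring. The plan is to derive it from the log-convexity encoded in Lemma~\ref{lem:centered-ratio}, namely $\mathcal{M}_q(B,c)^2\le \mathcal{M}_{q-1}(B,c)\mathcal{M}_{q+1}(B,c)$ for $q=2,3$. Throughout, write $\mathcal{M}_q$ for $\mathcal{M}_q(B,c)$. Since $B\neq cI_r$, all $\mathcal{M}_q$ with $q>0$ are strictly positive, so we may divide and take square roots freely.

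First, Lemma~\ref{lem:centered-ratio} gives the chain
\[
\frac{\mathcal{M}_2}{\mathcal{M}_1}\le \frac{\mathcal{M}_3}{\mathcal{M}_2}\le \frac{\mathcal{M}_4}{\mathcal{M}_3}.
\]
Multiplying the two outer ratios against the middle one in the right way, we obtain
\[
\left(\frac{\mathcal{M}_2}{\mathcal{M}_1}\right)^2 \le \frac{\mathcal{M}_2}{\mathcal{M}_1}\cdot\frac{\mathcal{M}_3}{\mathcal{M}_2}=\frac{\mathcal{M}_3}{\mathcal{M}_1}.
\]
We also have $\mathcal{M}_3/\mathcal{M}_2\le \mathcal{M}_4/\mathcal{M}_3$, i.e.\ $\mathcal{M}_3^2\le \mathcal{M}_2\mathcal{M}_4$. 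So the key step is to combine $\mathcal{M}_2^2\le\mathcal{M}_1\mathcal{M}_3$ and $\mathcal{M}_3^2\le\mathcal{M}_2\mathcal{M}_4$. Squaring the first gives $\mathcal{M}_2^4\le \mathcal{M}_1^2\mathcal{M}_3^2\le \mathcal{M}_1^2\mathcal{M}_2\mathcal{M}_4$. Dividing by $\mathcal{M}_2>0$ yields $\mathcal{M}_2^3\le\mathcal{M}_1^2\mathcal{M}_4$. Taking square roots then gives \eqref{eq:B_bound}.

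An alternative route is a direct Hölder argument with $a_i=|\lambda_i(B)-c|$, writing $a_i^2=a_i^{2/3}a_i^{4/3}$ with exponents $3/2$ and $3$:
\[
\sum a_i^2\le \Big(\sum a_i\Big)^{2/3}\Big(\sum a_i^4\Big)^{1/3}.
\]
Cubing this gives the same inequality. This would serve as a cross-check.

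The only point needing care is positivity, which is needed for the divisions and square roots; it is guaranteed by the hypothesis $B\neq cI_r$, as noted in the proof of Lemma~\ref{lem:centered-ratio}. To recover the classical energy bound, take $B=A(G)$ and $c=0$. Then $\mathcal{M}_1=\mathcal{E}(G)$, $\mathcal{M}_2=2m$, and $\mathcal{M}_4=w_4^{cl}(G)$, which gives $\mathcal{E}(G)\ge 2\sqrt2\, m\sqrt{m/w_4^{cl}(G)}$.
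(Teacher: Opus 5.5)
Your proof is correct and matches the paper's intended route. The paper omits the proof and presents the corollary as a direct consequence of Lemma~\ref{lem:centered-ratio}; combining $\mathcal{M}_2^2\le\mathcal{M}_1\mathcal{M}_3$ with $\mathcal{M}_3^2\le\mathcal{M}_2\mathcal{M}_4$ to get $\mathcal{M}_2^3\le\mathcal{M}_1^2\mathcal{M}_4$, with positivity from $B\neq cI_r$, is exactly that deduction, and your H\"older cross-check and specialization to the classical energy bound are also correct.
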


\begin{definition}
	For $q>0$ and $c\in\mathbb{R}$, define
	\begin{equation}\label{eq:twistmomentoperator}
		\Phi_q(\mathcal{L}(\widetilde{G}),c) = |\mathcal{L}(\widetilde{G})-cI_{2N+1}|^q,
	\end{equation}
	where the absolute value and the power are understood through the spectral functional calculus for real symmetric matrices; see, for example, \cite[\S 1]{higham2008functions}. We call $\Phi_q(\mathcal{L}(\widetilde{G}),c)$ the \textit{generalised $c$-twisted moment operator}
	for $\mathcal{L}(\widetilde{G})$.
\end{definition}

	How is $\Phi_q(\mathcal{L}(\widetilde{G}),c)$ related to $\mathcal{M}_q(\mathcal{L}(\widetilde{G}),c)$? For simplicity, let $X=\mathcal{L}(\widetilde{G})-cI_{2N+1}.$ Since $\mathcal{L}(\widetilde{G})$ is real symmetric, there is an orthogonal matrix $Q$ such that 
	\[
	X=Q \; \mathrm{diag}(\mu_1-c, \ldots, \mu_{2N+1}-c) \; Q^\top.
	\]
	Since $|X|=(X^2)^{1/2},$ using the same spectral decomposition and raising to the spectral power $q>0$, we must also have 
	\[
	|X|^q=Q \; \mathrm{diag}(|\mu_1-c|^q, \ldots, |\mu_{2N+1}-c|^q)\; Q^\top
	\]
	This means that the operator $\Phi_q(\mathcal{L}(\widetilde{G}),c)$ has eigenvalues $|\mu_1-c|^q, \ldots, |\mu_{2N+1}-c|^q.$
	Thus, for every $q>0$,
	\[
	\mathcal{M}_q(\mathcal{L}(\widetilde{G}),c)	= \sum^{2N+1}_{k=1} |\mu_k-c|^q = \mathrm{Tr}\big(\Phi_q(\mathcal{L}(\widetilde{G}),c)\big).
	\]

\begin{theorem}\label{momentdiff1}
	Let $q>0$ and $c\in\mathbb{R}$. Let
	$\rho:\mathbb{Z}_2\rightarrow \mathrm{GL}(\mathbb{R}^{\widetilde{V}})$ be the linear representation associated with the lift automorphism $\tau$, such that $\rho(g)=T$. Then, the $c$-twisted moments of $\mathcal{L}(G_S)$ and $M_{sym}$ satisfy
	\begin{align}
		\mathcal{M}_q(\mathcal{L}(G_S),c)
		&=\frac{1}{2}\mathrm{Tr}\left(\Phi_q(\mathcal{L}(\widetilde{G}),c)(I-T)\right),\\
		\mathcal{M}_q(M_{sym},c)
		&=\frac{1}{2}\mathrm{Tr}\left(
		\Phi_q(\mathcal{L}(\widetilde{G}),c)(I+T)
		\right).
	\end{align}
	Consequently,
	\begin{equation}\label{eq:momentdiff1}
		\mathcal{M}_q(M_{sym},c)-\mathcal{M}_q(\mathcal{L}(G_S),c)
		=\mathrm{Tr}\left(\Phi_q(\mathcal{L}(\widetilde{G}),c)T\right).
	\end{equation}
\end{theorem}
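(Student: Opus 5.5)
The plan is to conjugate everything by the orthogonal matrix $U=(U_-,U_+)$ of Theorem~\ref{orthogonalequiv}. From that theorem we have
\[
U^\top \mathcal{L}(\widetilde{G})U=\mathrm{diag}(\mathcal{L}(G_S),M_{sym}),
\]
and from Remark~\ref{REMARKorthogonalequiv} we have
\[
U^\top T U=\mathrm{diag}(-I_N,I_{N+1}).
\]
So $\mathcal{L}(\widetilde{G})$ and $T$ are simultaneously block-diagonalized in the same orthonormal basis. The whole argument then reduces to transporting the functional calculus through this conjugation and reading off block traces.

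The first step is to show that the spectral functional calculus commutes with orthogonal conjugation and with block-diagonal structure. For any real symmetric $A$, any orthogonal $U$, and $f(x)=|x-c|^q$, we have $f(U^\top A U)=U^\top f(A)U$. This holds because if $A=Q\Lambda Q^\top$, then $U^\top A U=(U^\top Q)\Lambda(U^\top Q)^\top$ is again a spectral decomposition. Similarly, $f(\mathrm{diag}(A_1,A_2))=\mathrm{diag}(f(A_1),f(A_2))$, since one may diagonalize each block separately. Combining these gives
\[
U^\top\Phi_q(\mathcal{L}(\widetilde{G}),c)U=\mathrm{diag}\bigl(|\mathcal{L}(G_S)-cI_N|^q,\;|M_{sym}-cI_{N+1}|^q\bigr).
\]

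The second step computes the projections $\tfrac12(I\mp T)$ in the new basis. By the Remark,
\[
U^\top\tfrac12(I-T)U=\mathrm{diag}(I_N,0), \qquad U^\top\tfrac12(I+T)U=\mathrm{diag}(0,I_{N+1}).
\]
Using cyclicity of trace, $\mathrm{Tr}(XY)=\mathrm{Tr}(U^\top XU\,U^\top YU)$. This yields
\[
\tfrac12\mathrm{Tr}\bigl(\Phi_q(I-T)\bigr)=\mathrm{Tr}\,|\mathcal{L}(G_S)-cI_N|^q .
\]
The right-hand side equals $\sum_i|\lambda_i-c|^q=\mathcal{M}_q(\mathcal{L}(G_S),c)$, by the same eigenvalue argument given just before the theorem for $\mathcal{L}(\widetilde{G})$. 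The symmetric case is identical and gives $\mathcal{M}_q(M_{sym},c)$.

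Finally, subtracting the two identities gives $\tfrac12\mathrm{Tr}(\Phi_q(I+T))-\tfrac12\mathrm{Tr}(\Phi_q(I-T))=\mathrm{Tr}(\Phi_q T)$, which is \eqref{eq:momentdiff1}. There is no real obstacle here. The only point needing care is the first step, namely that $|\cdot|^q$ defined via $(X^2)^{q/2}$ respects conjugation and block sums. I would justify this directly from the spectral decomposition rather than citing polynomial interpolation, since $q$ need not be an integer.
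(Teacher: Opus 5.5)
Your proposal is correct and is essentially the paper's argument. Both rest on the isotypic block decomposition of Theorem~\ref{orthogonalequiv} and the projections $P_\pm=\frac{1}{2}(I\pm T)$, and then read off block traces. The paper works in a joint orthonormal eigenbasis of $\mathcal{L}(\widetilde{G})$ and $T$, which it obtains from $[\mathcal{L}(\widetilde{G}),T]=0$. You instead conjugate by the explicit $U$ and use that the functional calculus respects orthogonal conjugation and direct sums. Your version makes explicit the identification of the symmetric-block eigenvalues with $\mathrm{Spec}(M_{sym})$, which the paper uses tacitly.
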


\begin{proof}
	Since $\tau \in \text{Aut}(\widetilde{G})$, the representation matrix $T$ commutes with the Laplacian, $[\mathcal{L}(\widetilde{G}), T] = 0$. Thus, $T$ commutes with the continuous operator $\Phi_q(\mathcal{L}(\widetilde{G}), c)$, ensuring they are simultaneously diagonalizable. Recall from Proposition \ref{isotypicdecomp} that the matrices $P_+ = \frac{1}{2}(I + T)$ and $P_- = \frac{1}{2}(I - T)$ are the orthogonal projection operators onto the symmetric subspace $(\mathbb{R}^{\widetilde{V}})^+$ and the anti-symmetric subspace $(\mathbb{R}^{\widetilde{V}})^-$, respectively. Let $\{w_k\}_{k=1}^{2N+1}$ be an orthonormal eigenbasis diagonalizing both $\mathcal{L}(\widetilde{G})$ and $T$, partitioned into $N+1$ symmetric eigenvectors $\{w_i^+\}$ and $N$ anti-symmetric eigenvectors $\{w_j^-\}$. 
	
	The projection $P_+$ acts as the identity on $\{w_i^+\}$ and vanishes on $\{w_j^-\}$. Evaluating the trace of the projected operator restricted to the symmetric subspace yields:
	\begin{align*}
		\frac{1}{2} \mathrm{Tr}\Big(\Phi_q(\mathcal{L}(\widetilde{G}), c)(I + T)\Big) 
		&= \mathrm{Tr}\Big(\Phi_q(\mathcal{L}(\widetilde{G}), c) P_+\Big) \\
		&= \sum_{i=1}^{N+1} (w_i^+)^\top \Phi_q(\mathcal{L}(\widetilde{G}), c) w_i^+ \\
		&= \sum_{\nu \in \mathrm{Spec}(M_{sym})} |\nu - c|^q \\
		&= \mathcal{M}_q(M_{sym}, c).
	\end{align*}
	Similarly, $P_-$ acts as the identity on $\{w_j^-\}$, we have 
	\begin{align*}
		\frac{1}{2} \mathrm{Tr}\Big(\Phi_q(\mathcal{L}(\widetilde{G}), c)(I - T)\Big) 
		&= \mathrm{Tr}\Big(\Phi_q(\mathcal{L}(\widetilde{G}), c) P_-\Big) \\
		&= \sum_{j=1}^N (w_j^-)^\top \Phi_q(\mathcal{L}(\widetilde{G}), c) w_j^- \\
		&= \sum_{\lambda \in \mathrm{Spec}(\mathcal{L}(G_S))} |\lambda - c|^q \\
		&= \mathcal{M}_q(\mathcal{L}(G_S), c).
	\end{align*}
	Consequently, the difference of these $c$-twisted moments is 
	\begin{align*}
		\mathcal{M}_q(M_{sym}, c) - \mathcal{M}_q(\mathcal{L}(G_S), c) 
		&= \frac{1}{2} \mathrm{Tr}\Big(\Phi_q (I + T)\Big) - \frac{1}{2} \mathrm{Tr}\Big(\Phi_q (I - T)\Big) \\
		&= \mathrm{Tr}\Big(\Phi_q(\mathcal{L}(\widetilde{G}), c) T\Big). \qedhere
	\end{align*}	
\end{proof}

\begin{theorem}\label{momentdiff2}
	Let $q>0$ and $c\in\mathbb{R}$. Let $e_1\ge e_2\ge \cdots \ge e_{2N+1}\ge 0$ denote the eigenvalues of $\Phi_q(\mathcal{L}(\widetilde{G}),c)$ in non-increasing order. Then,
	\begin{equation}\label{eq:momentdiff2}
		\sum_{i=N+1}^{2N+1}e_i-\sum_{i=1}^N e_i
		\le	\mathcal{M}_q(M_{sym},c)-\mathcal{M}_q(\mathcal{L}(G_S),c)
		\le	\sum_{i=1}^{N+1}e_i-\sum_{i=N+2}^{2N+1}e_i.
	\end{equation}
\end{theorem}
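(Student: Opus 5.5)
By Theorem~\ref{momentdiff1}, the quantity in the middle of \eqref{eq:momentdiff2} equals $\mathrm{Tr}\big(\Phi_q(\mathcal{L}(\widetilde{G}),c)\,T\big)$. The plan is to bound this trace of a product of two real symmetric matrices with the classical rearrangement bounds for traces, the von Neumann--Ruhe trace inequality. For real symmetric $A,B$ of order $n$ with eigenvalues $a_1\ge\cdots\ge a_n$ and $b_1\ge\cdots\ge b_n$, it states
\[
\sum_{i=1}^n a_i b_{n+1-i}\le \mathrm{Tr}(AB)\le \sum_{i=1}^n a_i b_i .
\]
We take $A=\Phi_q(\mathcal{L}(\widetilde{G}),c)$, whose eigenvalues are $e_1\ge\cdots\ge e_{2N+1}\ge 0$, and $B=T$.

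The key input is the spectrum of $T$. By Proposition~\ref{isotypicdecomp} (or Remark~\ref{REMARKorthogonalequiv}), $T$ has eigenvalue $+1$ with multiplicity $N+1$ and eigenvalue $-1$ with multiplicity $N$. In non-increasing order these are $b_1=\cdots=b_{N+1}=1$ and $b_{N+2}=\cdots=b_{2N+1}=-1$.

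For the upper bound, pairing in the same order gives
\[
\sum_{i=1}^{N+1}e_i-\sum_{i=N+2}^{2N+1}e_i .
\]
For the lower bound, pairing in reversed order gives $b_{2N+2-i}=-1$ for $i\le N$ and $b_{2N+2-i}=+1$ for $i\ge N+1$, so the sum is
\[
-\sum_{i=1}^N e_i+\sum_{i=N+1}^{2N+1}e_i .
\]
These are exactly the two sides of \eqref{eq:momentdiff2}.

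The only real obstacle is justifying the trace inequality itself, if we want the proof self-contained rather than citing it (e.g.\ from Marshall--Olkin or Bhatia). I would prove it directly. Diagonalize $A=Q\,\mathrm{diag}(e)\,Q^\top$, so that $\mathrm{Tr}(AT)=\sum_i e_i d_i$ with $d_i=(Q^\top T Q)_{ii}$. By Schur's theorem, $d$ is majorized by the eigenvalue vector $(1,\dots,1,-1,\dots,-1)$ of $T$. The result then follows from the standard fact that $\sum e_i d_i$, with $e$ sorted, is maximized (resp.\ minimized) over vectors majorized by $b$ at $b$ sorted decreasingly (resp.\ increasingly). This is an Abel summation argument using the partial-sum inequalities of majorization together with equality of the total sums.

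As an alternative avoiding majorization, I could use the projection form $T=2P_+-I$. Then $\mathrm{Tr}(AT)=2\,\mathrm{Tr}(AP_+)-\mathrm{Tr}(A)$. Ky Fan's maximum principle, applied to the rank-$(N+1)$ projection $P_+$, gives $\sum_{i=N+1}^{2N+1}e_i\le \mathrm{Tr}(AP_+)\le\sum_{i=1}^{N+1}e_i$. Substituting these bounds yields both inequalities at once.
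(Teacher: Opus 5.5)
Your proposal is correct and follows the paper's proof: Theorem~\ref{momentdiff1} turns the middle term into $\mathrm{Tr}(\Phi_q T)$, and the von Neumann/Theobald trace inequality, applied with $\mathrm{Spec}(T)$ consisting of $N+1$ eigenvalues $+1$ and $N$ eigenvalues $-1$, gives both bounds (the paper simply cites Marshall--Olkin and Theobald for the inequality). Your self-contained justifications are both valid additions. The Ky Fan version via $T=2P_+-I$ is especially clean, since it needs only the extremal traces of rank-$(N+1)$ projections.
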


\begin{proof}
	By Theorem \ref{momentdiff1}, the middle term of \eqref{eq:momentdiff2} is $\mathrm{Tr}(\Phi_q(\mathcal{L}(\widetilde{G}),c)T).$ Let $e_i=$ $\lambda_i(\Phi_q(\mathcal{L}(\widetilde{G}),c)),$ and let $\lambda_i(T)$ denote the eigenvalues of $T$, both ordered non-increasingly. Since $T$ and $\Phi_q(\mathcal{L}(\widetilde{G}),c)$ are real symmetric matrices of order $2N+1$, there exists a trace inequality (cf. \cite[\S 9, Sect. H.1.g, Sect. H.1.h]{marshall2011inequalities},  \cite{theobald1975inequality}) such that
	\begin{equation}\label{eq:Neumanninequality1}
	\sum_{i=1}^{2N+1}
	\lambda_i(\Phi_q(\mathcal{L}(\widetilde{G}),c))
	\lambda_{2N+2-i}(T)
	\le
	\mathrm{Tr}\left(\Phi_q(\mathcal{L}(\widetilde{G}),c)T\right)
	\le
	\sum_{i=1}^{2N+1}
	\lambda_i(\Phi_q(\mathcal{L}(\widetilde{G}),c))
	\lambda_i(T).
	\end{equation}
	The spectrum of $T$ consists of $N+1$ eigenvalues equal to $+1$ and $N$ eigenvalues	equal to $-1$. Hence, in non-increasing order,
	\[
	\lambda_i(T)
   =\begin{cases}
		1,&1\le i\le N+1,\\
		-1,&N+2\le i\le 2N+1.
	\end{cases}
	\]
	The upper estimate therefore becomes $\sum_{i=1}^{N+1}e_i-\sum_{i=N+2}^{2N+1}e_i.$ For the lower estimate, the reversed sequence $\lambda_{2N+2-i}(T)$ assigns $-1$ to the first $N$ terms and $+1$ to the remaining	$N+1$ terms. Hence, the lower estimate becomes $\sum_{i=N+1}^{2N+1}e_i-\sum_{i=1}^{N}e_i. $
\end{proof}

	Whilst Theorem \ref{momentdiff2} gives us a bound for the difference where we utilize \eqref{eq:Neumanninequality1}, it is, however, a loose bound, after checking with some examples. 
	
	In the following, we attempt to derive a tighter upper bound and illustrate it with an example. 

\begin{theorem}\label{thm:tightbound1}
	Let $q>0$ and $c\leq 0$. Let $e_1 \ge e_2 \ge \dots \ge e_{2N+1} \ge 0$ denote the eigenvalues of $\Phi_q$ in \eqref{eq:twistmomentoperator}. Let $e_{2N+1}$ correspond to the constant vector $\mathbf{1} \in (\mathbb{R}^{\widetilde{V}})^+$. Let $\mathrm{Spec}^*(\Phi_q(M_{sym},c))$ denote the spectrum obtained from $\mathrm{Spec}(\Phi_q(M_{sym},c))$ by deleting precisely the single eigenvalue $e_{2N+1}$.	Then,
	\begin{equation}\label{eq:tightbound1}
		\mathcal{M}_q(M_{sym}, c) - \mathcal{M}_q(\mathcal{L}(G_S), c) \le e_{2N+1} + \sum_{i=1}^N e_i - \sum_{i=N+1}^{2N} e_i,
	\end{equation}
	where equality holds if and only if
	\begin{equation}\label{eq:tightbound2}
		\min_{\lambda \in \mathrm{Spec}^*(\Phi_q(M_{sym},c))} \lambda \ge \max_{\nu \in \mathrm{Spec}(\Phi_q(\mathcal{L}(G_S), c))} \nu.
	\end{equation}
\end{theorem}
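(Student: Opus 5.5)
The plan is to reduce everything to a combinatorial statement about splitting the multiset $\{e_1,\dots,e_{2N}\}$ into two halves of size $N$. By Theorem~\ref{spectrumsum}, $\mathrm{Spec}(\mathcal{L}(\widetilde G))$ is the multiset union of $\mathrm{Spec}(\mathcal{L}(G_S))$ and $\mathrm{Spec}(M_{sym})$. Hence $\mathrm{Spec}(\Phi_q(\mathcal{L}(\widetilde G),c))=\{e_i\}_{i=1}^{2N+1}$ is the multiset union of
\[
\mathcal{B}:=\mathrm{Spec}(\Phi_q(\mathcal{L}(G_S),c)),\qquad \mathcal{A}':=\mathrm{Spec}(\Phi_q(M_{sym},c)),
\]
where $\Phi_q$ of the blocks is understood by the same functional calculus. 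By definition, $\mathcal{M}_q(M_{sym},c)$ and $\mathcal{M}_q(\mathcal{L}(G_S),c)$ are the sums of $\mathcal{A}'$ and $\mathcal{B}$, respectively.

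\textbf{Step 1: locate $e_{2N+1}$ in $\mathcal{A}'$.} Since $c\le 0$ and every Laplacian eigenvalue $\mu$ is nonnegative, $|\mu-c|^q=(\mu-c)^q$ is nondecreasing in $\mu$. So the minimal eigenvalue $e_{2N+1}=|0-c|^q=|c|^q$ comes from the Laplacian eigenvalue $0$ of the constant vector $\mathbf{1}$. This vector satisfies $T\mathbf{1}=\mathbf{1}$, so it lies in $(\mathbb{R}^{\widetilde V})^+$. Under the orthogonal equivalence of Theorem~\ref{orthogonalequiv}, it therefore gives an eigenvalue of $M_{sym}$, namely $U_+^\top\mathbf{1}$ is an eigenvector of $M_{sym}$ for $0$. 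Consequently $e_{2N+1}\in\mathcal{A}'$, and $\mathcal{A}:=\mathrm{Spec}^*(\Phi_q(M_{sym},c))=\mathcal{A}'\setminus\{e_{2N+1}\}$ has exactly $N$ elements. Moreover, $\mathcal{A}\uplus\mathcal{B}=\{e_1,\dots,e_{2N}\}$ as multisets.

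\textbf{Step 2: reduce the difference.} From Step 1,
\[
\mathcal{M}_q(M_{sym},c)-\mathcal{M}_q(\mathcal{L}(G_S),c)=e_{2N+1}+\sum_{a\in\mathcal{A}}a-\sum_{b\in\mathcal{B}}b .
\]
Write $\Sigma:=\sum_{i=1}^{2N}e_i$. Then $\sum\mathcal{B}=\Sigma-\sum\mathcal{A}$, so the right-hand side equals $e_{2N+1}+2\sum\mathcal{A}-\Sigma$. Hence \eqref{eq:tightbound1} is equivalent to
\[
\sum_{a\in\mathcal{A}}a\le \sum_{i=1}^N e_i ,
\]
which holds because any $N$-element sub-multiset of $\{e_1,\dots,e_{2N}\}$ has sum at most the sum of its $N$ largest entries.

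\textbf{Step 3: equality case, the only delicate point.} The issue is ties, so the equality condition must be phrased as \eqref{eq:tightbound2} rather than as ``$\mathcal{A}$ consists of the top indices.''

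For the ``if'' direction, suppose $\min\mathcal{A}\ge\max\mathcal{B}$. Sort $\mathcal{A}$ as $a_1\ge\cdots\ge a_N$ and $\mathcal{B}$ as $b_1\ge\cdots\ge b_N$. Then the concatenated list $a_1,\dots,a_N,b_1,\dots,b_N$ is non-increasing. By uniqueness of the sorted rearrangement, it coincides with $e_1,\dots,e_{2N}$, so $\sum\mathcal{A}=\sum_{i\le N}e_i$.

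For the ``only if'' direction, I would argue by contrapositive with an exchange argument. Suppose some $b\in\mathcal{B}$ and $a\in\mathcal{A}$ satisfy $b>a$. Then $\sum\mathcal{A}<\sum(\mathcal{A}\setminus\{a\}\cup\{b\})\le\sum_{i\le N}e_i$, so the inequality is strict.

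Combining the two directions yields the stated characterization. Finally, I would note that Step~1 is the only place where the hypothesis $c\le 0$ enters. It guarantees that $e_{2N+1}$ is attained on the symmetric block. This is what makes the bound sharper than Theorem~\ref{momentdiff2}, whose upper bound can be rewritten as $e_{N+1}+\sum_{i\le N}e_i-\sum_{i=N+2}^{2N+1}e_i$.
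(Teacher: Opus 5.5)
Your proof is correct, but it takes a different route from the paper.

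The paper compresses to $W^\perp$ with $W=\mathrm{span}(\mathbf{1})$, and sets $A=\Phi_q|_{W^\perp}$ and $B=T|_{W^\perp}$. It then applies the general Theobald/von Neumann trace inequality $\mathrm{Tr}(AB)\le\sum_i\lambda_i(A)\lambda_i(B)$. The equality case comes from Theobald's characterization (a simultaneous ordered spectral decomposition), combined with the fact that $A$ and $B$ commute.

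You use the commutation from the start. Since $\Phi_q$ is block diagonal with respect to the isotypic decomposition, $\mathrm{Tr}(\Phi_q T)$ is just $e_{2N+1}$ plus a signed sum over a fixed partition $\mathcal{A}\uplus\mathcal{B}$ of $\{e_1,\dots,e_{2N}\}$. The bound then reduces to the elementary fact that $N$ of the $2N$ values sum to at most the $N$ largest. The equality case becomes a one-line exchange argument.

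Your version has three advantages:
\begin{enumerate}
\item It is self-contained and needs no trace inequality for non-commuting matrices.
\item It handles ties cleanly: your criterion $\min\mathcal{A}\ge\max\mathcal{B}$ is exactly right. The paper's intermediate phrasing, ``the eigenvectors corresponding to the $N$ largest eigenvalues of $A$ span $\mathbb{E}_B(+1)$'', needs interpretation when $e_N=e_{N+1}$.
\item It derives from $c\le 0$ that $e_{2N+1}=|c|^q$ is attained on $\mathbf{1}$, and hence on the symmetric block via $U_+^\top\mathbf{1}$. The paper simply posits this as a hypothesis and never explicitly invokes $c\le 0$.
\end{enumerate}

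What the paper's route buys is uniformity with Theorem~\ref{momentdiff2}. Both the loose and the refined bound appear as instances of the same trace inequality, so their comparison is parallel. The general inequality is not needed here, though, since the matrices commute.
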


\begin{proof}	
	Let $T$ be the involution matrix. Since $\mathbf{1} \in (\mathbb{R}^{\widetilde{V}})^+$, $T\mathbf{1} = \mathbf{1}$. Thus, $e_{2N+1}$ pairs with $+1 \in \mathrm{Spec}(T)$. Let $W = \mathrm{span}(\mathbf{1})$. Since $\mathbb{R}^{\widetilde{V}} = (\mathbb{R}^{\widetilde{V}})^+ \oplus (\mathbb{R}^{\widetilde{V}})^-$ and $W \subset (\mathbb{R}^{\widetilde{V}})^+$, its orthogonal complement is $W^\perp = ((\mathbb{R}^{\widetilde{V}})^+ \cap W^\perp) \oplus (\mathbb{R}^{\widetilde{V}})^-$, which has dimension $2N$. Let $A = \Phi_q|_{W^\perp}$ and $B = T|_{W^\perp}$. Apply Inequality \eqref{eq:Neumanninequality1} to $A$ and $B$ yields:
	\begin{equation}\label{eq:tightbound3}
	\mathrm{Tr}(AB)=\mathrm{Tr}(\Phi_q T) - e_{2N+1} \le \sum_{i=1}^N e_i - \sum_{i=N+1}^{2N} e_i.
	\end{equation}
	Apply \eqref{eq:momentdiff1} to \eqref{eq:tightbound3}, we obtain \eqref{eq:tightbound1}. From \cite{theobald1975inequality}, equality holds if and only if $A$ and $B$ admit a simultaneous ordered spectral decomposition.
	Since 
	$\mathrm{Spec}(B) = 
	\begin{bmatrix}
		+1 & -1 \\ 
		N & N
	\end{bmatrix},$ 
	its eigenspaces are $\mathbb{E}_B(+1) = (\mathbb{R}^{\widetilde{V}})^+ \cap W^\perp$ and $\mathbb{E}_B(-1) = (\mathbb{R}^{\widetilde{V}})^-$. Thus, the equality holds if and only if the eigenvectors corresponding to the $N$ largest eigenvalues of $A$ span $\mathbb{E}_B(+1)$, and those corresponding to the $N$ smallest eigenvalues of $A$ span $\mathbb{E}_B(-1)$. In particular, 
	\begin{equation}\label{eq:Thm4.9spec1}
	\min \mathrm{Spec}(A|_{\mathbb{E}_B(+1)})\ge \max \mathrm{Spec}(A|_{\mathbb{E}_B(-1)}).
	\end{equation}
	
	Conversely, since $A$ and $B$ commute, the eigenspaces $\mathbb{E}_B(+1)$ and $\mathbb{E}_B(-1)$ are $A$-invariant. Hence, if \eqref{eq:Thm4.9spec1} holds, their
	respective orthonormal eigenbases give a simultaneous ordered spectral decomposition of $A$ and $B$,. Thus, the equality follows. Now, note that
	\[
	\mathrm{Spec}(A|_{\mathbb{E}_B(+1)}) = \mathrm{Spec}^*(\Phi_q(M_{sym},c))
	\]
	because the symmetric block of $\Phi_q$ is $\Phi_q(M_{sym},c)$ and the restriction	to $W^\perp$ deletes precisely the single eigenvalue $e_{2N+1}$ corresponding to $W=\mathrm{span}(\mathbf{1})$. On the other hand, we have
	\[
	\mathrm{Spec}(A|_{\mathbb{E}_B(-1)}) = \mathrm{Spec}(\Phi_q(\mathcal{L}(G_S),c)).
	\]
	Hence, the equality holds if and only if
	\[
	\min_{\lambda\in \mathrm{Spec}^*(\Phi_q(M_{sym},c))}\lambda
	\ge
	\max_{\nu\in \mathrm{Spec}(\Phi_q(\mathcal{L}(G_S),c))}\nu.
	\]
	This proves \eqref{eq:tightbound2}.
\end{proof}

The upper bound in \ref{eq:tightbound1} is tight and is attained by a graph family as illustrated in Example \ref{Kn_Example_min_max_equality} below. Before that, let us prove a corollary.

\begin{corollary}
	Let $G_S$ be pseudo-connected, and let $c\le 0$ and $q>0$. Let $\mathrm{Spec}_0(M_{sym})$ denote the spectrum of $M_{sym}$ obtained by deleting its unique zero eigenvalue. 
	Then, the upper bound in \eqref{eq:tightbound1} is attained if and only if
	\[
	\min_{\alpha\in \mathrm{Spec}_0(M_{sym})}\alpha
	\ge \max_{\lambda\in \mathrm{Spec}(\mathcal{L}(G_S))}\lambda.
	\]
	In particular, if the smallest nonzero eigenvalue of $\mathcal{L}(\widetilde{G})$ belongs to the symmetric block and is less than $\max(\mathrm{Spec}(\mathcal{L}(G_S)))$, then the upper bound is not attained.
\end{corollary}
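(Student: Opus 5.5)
The plan is to reduce the corollary to the equality criterion \eqref{eq:tightbound2} of Theorem~\ref{thm:tightbound1}. That criterion compares eigenvalues of $\Phi_q(\cdot,c)=|\cdot-c|^q$, and we will convert it into a comparison of the eigenvalues of $M_{sym}$ and $\mathcal{L}(G_S)$ themselves. The key observation is that $c\le 0$ and every relevant matrix is positive semidefinite. Hence for every eigenvalue $\alpha\ge 0$ we have $|\alpha-c|^q=(\alpha-c)^q=(\alpha+|c|)^q$. The map $f(x)=(x-c)^q$ is strictly increasing on $[0,\infty)$ for $q>0$. It therefore preserves and reflects the order between eigenvalues: for $\alpha,\beta\ge0$, $f(\alpha)\ge f(\beta)$ if and only if $\alpha\ge\beta$.

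\textbf{Step 1 (identify the deleted eigenvalue).} The constant vector $\mathbf{1}$ lies in $\ker\mathcal{L}(\widetilde G)$ and in $(\mathbb{R}^{\widetilde V})^+$. Under $U_+$ it corresponds to an eigenvector of $M_{sym}$ with eigenvalue $0$, namely $U_+^\top\mathbf{1}=(1,\sqrt2\,\mathbf{1}_N)$. Next we show this zero eigenvalue of $M_{sym}$ is simple. Since $G_S$ is pseudo-connected, Lemma~\ref{lem:positive-definite} shows $\mathcal{L}(G_S)$ is positive definite, so $0\notin\mathrm{Spec}(\mathcal{L}(G_S))$. The interlacing \eqref{eq:specinterlaced} then gives $\nu_N\ge\lambda_N>0$, so only $\nu_{N+1}=0$ can vanish. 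Consequently the eigenvalue of $\Phi_q$ attached to $\mathbf{1}$ is $f(0)=|c|^q$, which is the global minimum $e_{2N+1}$, consistent with the hypothesis of Theorem~\ref{thm:tightbound1}. Moreover $\mathrm{Spec}^*(\Phi_q(M_{sym},c))=f(\mathrm{Spec}_0(M_{sym}))$ as multisets. One subtle point deserves care. If $c=0$ and several eigenvalues of $\Phi_q$ equal $e_{2N+1}$, the deletion must be of the eigenvalue belonging to $\mathbf{1}$. Simplicity of the zero eigenvalue of $M_{sym}$ guarantees that this is unambiguous.

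\textbf{Step 2 (transfer the criterion).} By the same monotonicity, $\mathrm{Spec}(\Phi_q(\mathcal{L}(G_S),c))=f(\mathrm{Spec}(\mathcal{L}(G_S)))$. Since $f$ is strictly increasing, $\min f(\mathrm{Spec}_0(M_{sym}))=f(\min\mathrm{Spec}_0(M_{sym}))$, and likewise $\max f(\mathrm{Spec}(\mathcal{L}(G_S)))=f(\max\mathrm{Spec}(\mathcal{L}(G_S)))$. Condition \eqref{eq:tightbound2} is therefore equivalent to
\[
\min_{\alpha\in\mathrm{Spec}_0(M_{sym})}\alpha\ \ge\ \max_{\lambda\in\mathrm{Spec}(\mathcal{L}(G_S))}\lambda .
\]
Theorem~\ref{thm:tightbound1} then yields the stated if-and-only-if.

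\textbf{Step 3 (the ``in particular'').} Suppose the smallest nonzero eigenvalue $\mu_*$ of $\mathcal{L}(\widetilde G)$ comes from the symmetric block. By Theorem~\ref{spectrumsum} this means $\mu_*\in\mathrm{Spec}_0(M_{sym})$, so $\min\mathrm{Spec}_0(M_{sym})\le\mu_*$. If in addition $\mu_*<\max\mathrm{Spec}(\mathcal{L}(G_S))$, the displayed inequality fails, and by Step 2 the bound is not attained.

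The main obstacle is Step 1. We must make sure that the ordering and deletion in Theorem~\ref{thm:tightbound1} are well defined, which rests on the simplicity of the kernel of $M_{sym}$ and on $e_{2N+1}$ being attached to $\mathbf{1}$. The remaining steps are monotonicity bookkeeping.
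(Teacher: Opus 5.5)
Your proof is correct and follows the paper's argument: the strict monotonicity of $f(x)=|x-c|^q$ on $[0,\infty)$ turns the criterion \eqref{eq:tightbound2} into the stated eigenvalue comparison, once $\mathrm{Spec}^*(\Phi_q(M_{sym},c))=f(\mathrm{Spec}_0(M_{sym}))$ is established via a simple zero eigenvalue of $M_{sym}$. The only differences are minor. You obtain that simplicity from interlacing ($\nu_N\ge\lambda_N>0$) plus the explicit kernel vector $(1,\sqrt2\,\mathbf 1_N)$, whereas the paper uses connectedness of $\widetilde G$ together with Theorem~\ref{spectrumsum}; you also check explicitly that $\mathbf 1$ carries $e_{2N+1}$, which the paper leaves implicit.
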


\begin{proof}
	Since $G_S$ is pseudo-connected, $\mathcal L(G_S)$ is positive definite by Lemma \ref{lem:positive-definite}. Moreover, the A\c{c}{\i}kme\c{s}e lift $\widetilde G$ is connected. Therefore, $\mathcal L(\widetilde G)$ has exactly one zero eigenvalue. Since $\mathcal L(G_S)$ has no zero eigenvalue, by Theorem \ref{spectrumsum},  $M_{sym}$ has exactly one zero eigenvalue. 
	
	Since $c\le 0$ and $q>0$, the function $f(x)=|x-c|^q$ is strictly increasing on $[0,\infty)$. Since both $\mathcal{L}(G_S)$ and	$M_{sym}$ are positive semidefinite, all their eigenvalues lie in $[0,\infty)$. By spectral functional calculus, we have
	\[
	\mathrm{Spec}(\Phi_q(M_{sym},c))
	=f(\mathrm{Spec}(M_{sym}))
	\quad \text{ and } \quad
	\mathrm{Spec}(\Phi_q(\mathcal{L}(G_S),c))
	= f(\mathrm{Spec}(\mathcal{L}(G_S))).
	\]
	Moreover, let $v_0$ denote the eigenvector of $M_{sym}$ corresponding to that zero eigenvalue. By spectral functional calculus,  $\Phi_q(M_{sym},c)v_0=f(0)v_0=|c|^qv_0.$ Thus, that zero eigenvalue of $M_{sym}$ corresponds to the eigenvalue $|c|^q$ of $\Phi_q(M_{sym},c)$.
	Since $\mathrm{Spec}_0(M_{sym})$ is the spectrum obtained from $\mathrm{Spec}(M_{sym})$ by deleting that zero eigenvalue, we have
	\begin{equation}\label{eq:funCalMsym1}
	\mathrm{Spec}^*(\Phi_q(M_{sym},c)) = f(\mathrm{Spec}_0(M_{sym})).
	\end{equation}
	
	Apply \eqref{eq:funCalMsym1} in \eqref{eq:tightbound2}, we get
	\[
	\min_{\alpha\in f(\mathrm{Spec}_0(M_{sym}))}\alpha
	\ge
	\max_{\beta\in f(\mathrm{Spec}(\mathcal{L}(G_S)))}\beta.
	\]
	Since $f$ is strictly increasing on $[0,\infty)$, this is equivalent to
	\[
	\min_{\alpha \in \mathrm{Spec}_0(M_{sym})}\alpha
	\ge
	\max_{\beta \in \mathrm{Spec}(\mathcal{L}(G_S))}\beta.
	\]
	The final assertion follows immediately that if the smallest nonzero eigenvalue in the symmetric block is less than $\max(\mathrm{Spec}(\mathcal{L}(G_S)))$, then the equality condition fails.
	\end{proof}

\subsection{Example: Complete self-loop graphs}

	In the following theorem, we utilize the approach of equitable partition to determine the respective spectrum, see \cite[\S 9]{godsil2001algebraic}  and \cite[\S 2.3]{BrouwerHaemers} for more details.

\begin{theorem}
	Let $G_S = (K_n)_S$ with $S \subset V(K_n)$ and $|S| = \sigma$, $0 < \sigma < n$. Then,
	\begin{equation}
		\mathrm{Spec}(\mathcal{L}(G_S)) 
		=\begin{bmatrix}
			n+1 & n &  \lambda_+ & \lambda_-\\ 
			\sigma-1 & n-\sigma-1 & 1 & 1
		\end{bmatrix}
	\end{equation}
	\begin{equation}
		\mathrm{Spec}(M_{sym}) 
		=\begin{bmatrix}
		n+1 & n & 0 & \eta_+ & \eta_-\\ 
		\sigma-1 & n-\sigma-1 & 1 & 1 & 1
		\end{bmatrix}		
	\end{equation}	
	where 
	\begin{align}
	\lambda_\pm &=  \frac{n+1 \pm \sqrt{(n-1)^2 + 4(n-\sigma)}}{2}, \label{eq:lambda+-}\\
	\eta_\pm &= \frac{(n+2\sigma+1) \pm \sqrt{(n+2\sigma+1)^2 - 4\sigma (2n+1)}}{2} \label{eq:eta+-}
	\end{align}
\end{theorem}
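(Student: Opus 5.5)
Order the vertices of $K_n$ so that the $\sigma$ looped vertices come first. Then $\mathcal{L}(G_S)=nI-J+\mathrm{diag}(s)$, since every vertex has degree $n-1$, so its diagonal entry $n-1+s_i$ equals $n-J_{ii}+s_i$. The plan has three steps: split $\mathbb{R}^n$ (and later $\mathbb{R}^{n+1}$) into vectors orthogonal to the cell indicators and the span of the indicators, handle the first part directly, and use equitable-partition quotient matrices for the second.

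\emph{Eigenvalues of $\mathcal{L}(G_S)$.} Take the partition $\{S, V\setminus S\}$.
\begin{itemize}
\item If $x$ is supported on $S$ with $\sum x_i=0$, then $Jx=0$ and $\mathrm{diag}(s)x=x$, so $\mathcal{L}(G_S)x=(n+1)x$. This space has dimension $\sigma-1$.
\item If $x$ is supported on $V\setminus S$ with zero sum, then $\mathcal{L}(G_S)x=nx$. This space has dimension $n-\sigma-1$.
\item The remaining two eigenvalues come from the quotient matrix on the indicator vectors $\mathbf 1_S$ and $\mathbf 1_{V\setminus S}$:
\[
Q=\begin{pmatrix} n-\sigma+1 & -(n-\sigma)\\ -\sigma & \sigma\end{pmatrix}.
\]
It has trace $n+1$ and determinant $(n-\sigma+1)\sigma-\sigma(n-\sigma)=\sigma$. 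The discriminant is $(n+1)^2-4\sigma$.
\end{itemize}
This should equal $(n-1)^2+4(n-\sigma)$, and indeed $(n+1)^2-(n-1)^2=4n$, which gives \eqref{eq:lambda+-}.

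\emph{Eigenvalues of $M_{sym}$.} Embed vectors $x$ with zero sum on a single cell as $(0,x)$. Because $s^\top x=0$, the top row contributes nothing and the lower block acts exactly as $\mathcal{L}(G_S)$. This gives $n+1$ with multiplicity $\sigma-1$ and $n$ with multiplicity $n-\sigma-1$.

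The three remaining eigenvalues come from the equitable-type partition $\{\text{middle}\},S,V\setminus S$ of $M_{sym}$. Its row-sum quotient matrix is
\[
\begin{pmatrix} 2\sigma & -\sqrt2\sigma & 0\\ -\sqrt2 & n-\sigma+1 & -(n-\sigma)\\ 0 & -\sigma & \sigma\end{pmatrix}.
\]
The middle row arises because each looped vertex has entry $-\sqrt2$ in the middle column. $M_{sym}$ is not symmetric-weighted in the usual sense, but its invariant subspace spanned by $e_0,(0,\mathbf 1_S),(0,\mathbf 1_{V\setminus S})$ is what matters, so the quotient still yields eigenvalues.

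One eigenvalue is $0$, because $M_{sym}=U_+^\top\mathcal{L}(\widetilde G)U_+$ and the lifted constant vector $(1,\sqrt2\,\mathbf 1)$ is a kernel vector. Checking with the rows: $2\sigma-2\sigma=0$; then $-\sqrt2+\sqrt2(n-\sigma+1-n+\sigma)=0$; then $\sqrt2(-\sigma+\sigma)=0$.

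So the characteristic polynomial is $\lambda(\lambda^2-t\lambda+e_2)$, where $t$ is the trace and $e_2$ is the sum of the principal $2\times2$ minors:
\begin{itemize}
\item trace: $t=2\sigma+n-\sigma+1+\sigma=n+2\sigma+1$;
\item minor on middle and $S$: $2\sigma(n-\sigma+1)-2\sigma=2\sigma(n-\sigma)$;
\item minor on middle and $V\setminus S$: $2\sigma^2$;
\item minor on $S$ and $V\setminus S$: $\sigma$ (computed above).
\end{itemize}
Summing the minors gives $e_2=2\sigma n+\sigma=\sigma(2n+1)$, which matches \eqref{eq:eta+-}.

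The main obstacle is justifying the quotient step for the non-symmetric-looking partition of $M_{sym}$, where the $\sqrt2$ weights break symmetry. I would resolve this directly by verifying that the three-dimensional subspace spanned by the cell indicators is $M_{sym}$-invariant. Its orthogonal complement then consists of the zero-sum-on-each-cell vectors already handled, and dimension counting gives $(\sigma-1)+(n-\sigma-1)+3=n+1$ eigenvalues in total.
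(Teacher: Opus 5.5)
Your proposal is correct and follows essentially the same route as the paper: the zero-sum-on-a-cell subspaces $X_S$ and $Y_S$ give the eigenvalues $n+1$ and $n$, and the equitable-partition quotient matrices $Q_-$ and $Q_+$ give the rest (yours is $Q_+$ with the middle vertex listed first). Your extra steps only spell out what the paper states briefly: locating the zero root via the lifted constant vector $(1,\sqrt2\,\mathbf 1)$, getting the cubic from the trace and the principal $2\times 2$ minors, and checking invariance of the indicator span so the orthogonal complement handles the multiplicity count.
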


\begin{proof}
	Let $\Sigma=\mathrm{diag}(s_1,\ldots, s_n).$ Since $\mathcal{L}(K_n) = nI - J$, we have $\mathcal{L}(G_S) = nI - J + \Sigma$. In the case of looped vertices, first we consider the subspace
	\[
	X_S=\left\{x\in\mathbb R^n : x_i=0 \text{ for } i\notin S,\ \sum_{i\in S}x_i=0\right\}.
	\]
	For every $x\in X_S$, we have $\Sigma x=x$ and $Jx=\mathbf{0}$. Thus, 
	$
	\mathcal{L}(G_S)x = (nI - J + \Sigma)x = (n+1)x.
	$
	Since $\dim(X_S) = \sigma - 1$, this gives the eigenvalue $n+1$ with multiplicity $\sigma-1$.
	
	Similarly, in the case of unlooped vertices, consider 
	\[
	Y_S=\left\{y\in\mathbb R^n: y_i=0 \text{ for } i\in S,\ \sum_{i\notin S}y_i=0\right\}.
	\]
	For every $y\in Y_S$, we have $\Sigma y=\mathbf{0}$ and $Jy=\mathbf{0}$. Thus,
	$
	\mathcal{L}(G_S)y = ny. 
	$
	Since $\dim Y_S=n-\sigma-1$, this gives the eigenvalue $n$ with multiplicity $n-\sigma-1$.
	
	Partition $V$ into $\Pi = \{V_1, V_2\},$ where $V_1 = S$ and $V_2 = V \setminus S$.	Let $q_{ij}$ denote the sum of entries in $\mathcal{L}(G_S)$ from row $u \in V_i$ over columns $v \in V_j$. Then, for $u \in V_1$, 
	\begin{align*}
	q_{11} &= n(1) + (-1)(|V_1| - 1) = n - \sigma + 1 \\
	q_{12} &= -|V_2| = -(n - \sigma).
	\end{align*}
	As for $u \in V_2$, we can similarly obtain $q_{21} = -|V_1| = -\sigma$ and $q_{22} = n(1) - |V_2| + 0 = \sigma$. Since $q_{ij}$ is constant for all $u \in V_i$, $\Pi$ is an equitable partition. The quotient matrix of $\mathcal{L}(G_S)$ over $\Pi$ is:
	\begin{equation}\label{eq:quomat1}
	Q_- = \begin{pmatrix} n-\sigma+1 & -(n-\sigma) \\ -\sigma & \sigma \end{pmatrix}. 
	\end{equation}
	Since $\Pi$ is equitable, $\mathrm{Spec}(Q_-) \subseteq \mathrm{Spec}(\mathcal{L}(G_S))$. Since $\det(\xi I - Q_-) = \xi^2 - (n+1)\xi + \sigma$, its roots yield the remaining eigenvalues of $\mathcal{L}(G_S),$ cf. \cite[Theorem 9.3.3]{godsil2001algebraic}. Together with the eigenvalue $n+1$ of multiplicity $\sigma-1$ and the eigenvalue $n$ of multiplicity $n-\sigma-1$, the two roots of $\det(\xi I-Q_-)$ give $(\sigma-1)+(n-\sigma-1)+2=n$ eigenvalues of $\mathcal{L}(G_S)$ counted with multiplicity. 
	
	For $M_{sym}$, let $V_3 = \{v_{0}\}$. Construct the partition $\Pi_{sym} = \{V_1, V_2, V_3\}$. Extend $x$ and $y$ with $0$ at coordinate $v_{0}$. 
	Since $\sum_{i \in V_1} x_i = 0$ and $\sum_{i \in V_2} y_i = 0$, these extended vectors are  orthogonal to the row vector corresponding to $v_0$ in $M_{sym}$. Thus, they remain eigenvectors of $M_{sym}$, preserving the multiplicities of $n+1$ and $n$. Equivalently, we view $M_{sym}$ with respect to the coordinate partition $V_1\cup V_2\cup\{v_0\}$; this is only a permutation of coordinates and does not change the spectrum.
	
	Let $\widetilde{q}_{ij}$ denote the sum of entries in $M_{sym}$ from row $u \in V_i$ over columns $v \in V_j$.
	For $u \in V_1$, $\widetilde{q}_{13} = -\sqrt{2}$. For $u \in V_2$, $\widetilde{q}_{23} = 0$. For $u \in V_3$, $\widetilde{q}_{31} = |V_1|(-\sqrt{2}) = -\sigma\sqrt{2}$, $\widetilde{q}_{32} = 0$, and $\widetilde{q}_{33} = 2\sigma$. The entries $\widetilde{q}_{11}, \widetilde{q}_{12}, \widetilde{q}_{21}, \widetilde{q}_{22}$ are identical to $Q_-$.	Since $\widetilde{q}_{ij}$ is constant for all $u \in V_i$, $\Pi_{sym}$ is equitable.
	The quotient matrix of $M_{sym}$ over $\Pi_{sym}$ is:
	\begin{equation}\label{eq:quomat2}
	Q_+ = \begin{pmatrix} n-\sigma+1 & -(n-\sigma) & -\sqrt{2} \\ -\sigma & \sigma & 0 \\ -\sigma\sqrt{2} & 0 & 2\sigma \end{pmatrix}. 
	\end{equation}	
	Since $\Pi_{sym}$ is equitable, $\mathrm{Spec}(Q_+)\subseteq \mathrm{Spec}(M_{sym})$.
	Moreover,
	\begin{align*}
	\det(\xi I-Q_+)
	&= \xi^3-(n+2\sigma+1)\xi^2+\sigma (2n+1)\xi \\
	&= \xi\left(\xi^2-(n+2\sigma+1)\xi+\sigma (2n+1)\right).
	\end{align*}
	Hence, the quotient eigenvalues are $0$ and $\eta_\pm$ as in \eqref{eq:eta+-}. Together with the eigenvalue $n+1$ of multiplicity $\sigma-1$ and the eigenvalue $n$ of multiplicity $n-\sigma-1$, this gives $(\sigma-1)+(n-\sigma-1)+3=n+1$ eigenvalues of $M_{sym}$ counted with multiplicity. This completes the proof. 
\end{proof}

\begin{corollary}
	Let $G_S = (K_n)_S$ with $|S| = \sigma$ and $0<\sigma<n$ . Then, at $c=0$ and $q=1$, we have
	\begin{equation}
		\mathcal{M}_1(M_{sym}, 0) - \mathcal{M}_1(\mathcal{L}(G_S), 0) = 2\sigma.
	\end{equation}
\end{corollary}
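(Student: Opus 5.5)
At $c=0$, $q=1$ the twisted moments of a positive semidefinite matrix collapse to traces, and the identity should follow from the trace formula \eqref{eq:m1} and the block structure of $M_{sym}$. The plan is as follows.

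First, note that $\mathcal{M}_1(B,0)=\sum_i|\lambda_i(B)|$. Both $\mathcal{L}(G_S)$ (Lemma \ref{lem:positive-definite}) and $M_{sym}$ are positive semidefinite. For $M_{sym}$ this holds because, by Theorem \ref{orthogonalequiv}, it is orthogonally equivalent to the restriction of the positive semidefinite matrix $\mathcal{L}(\widetilde{G})=\widehat{E}^\top\widehat{E}$ to an invariant subspace. Hence all eigenvalues are nonnegative, the absolute values can be dropped, and $\mathcal{M}_1(B,0)=\mathrm{Tr}(B)$ for $B\in\{\mathcal{L}(G_S),M_{sym}\}$.

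Next, read the trace of $M_{sym}$ directly from its block form in \eqref{eq:unitaryeq2}: $\mathrm{Tr}(M_{sym})=2\sigma+\mathrm{Tr}(\mathcal{L}(G_S))$. This is the same computation as in the $k=1$ case of Theorem \ref{thm:spectral_moments}. Subtracting gives the difference $2\sigma$ at once.

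As a consistency check, I would also verify the result from the explicit spectra in the preceding theorem. Summing the spectrum of $M_{sym}$ gives
\[
(n+1)(\sigma-1)+n(n-\sigma-1)+0+(\eta_++\eta_-)=(n+1)(\sigma-1)+n(n-\sigma-1)+(n+2\sigma+1).
\]
Summing the spectrum of $\mathcal{L}(G_S)$ gives
\[
(n+1)(\sigma-1)+n(n-\sigma-1)+(\lambda_++\lambda_-)=(n+1)(\sigma-1)+n(n-\sigma-1)+(n+1).
\]
Here Vieta's formulas supply $\eta_++\eta_-=n+2\sigma+1$ and $\lambda_++\lambda_-=n+1$. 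The two sums differ by exactly $2\sigma$.

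There is no real obstacle. The only point needing care is justifying nonnegativity of the spectra, so that $|\cdot|$ can be removed. Since $c=0\le 0$, this could also be phrased via the monotonicity of $f(x)=|x|$ on $[0,\infty)$, as in the preceding corollary. For the explicit route, one also needs $\eta_\pm$ to be real and nonnegative, which holds because their sum and product, $n+2\sigma+1$ and $\sigma(2n+1)$, are both positive.
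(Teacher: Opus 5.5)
Your proof is correct. It shares the paper's first step, where positive semidefiniteness turns $\mathcal{M}_1(\cdot,0)$ into a trace, but the second step is genuinely different.

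The paper computes $\mathrm{Tr}(Q_+)-\mathrm{Tr}(Q_-)$ from the $K_n$-specific quotient matrices. This tacitly uses that the eigenvalues $n+1$ and $n$ occur with the same multiplicities in both spectra and cancel, which is exactly your ``consistency check.''

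Your main route instead reads $\mathrm{Tr}(M_{sym})=2\sigma+\mathrm{Tr}(\mathcal{L}(G_S))$ off the block form in \eqref{eq:unitaryeq2}, using no spectral information about $K_n$. What this buys is generality: the identity holds for every self-loop graph, since it is just \eqref{eq:m1}, equivalently $\Theta_1(\widetilde{G},T)=2\sigma$. So the hypotheses $G=K_n$ and $0<\sigma<n$ are superfluous. The paper's route instead serves to illustrate the explicit spectra it has just computed.

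One small slip in your side remark: positive sum and product of $\eta_\pm$ give nonnegativity only once the roots are known to be real. Reality comes from $M_{sym}$ being symmetric, since $\mathrm{Spec}(Q_+)\subseteq\mathrm{Spec}(M_{sym})$. It also follows from the discriminant $(n-2\sigma)^2+2n+1>0$. In any case, reality is not needed for the trace computation itself.
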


\begin{proof}
	For $c=0$ and $q=1$, $\mathcal{M}_1(A, 0) = \mathrm{Tr}(A)$ since $\mathcal{L}(G_S)$ and $M_{sym}$ are positive semi-definite.
	Evaluating the trace of the quotient matrices \eqref{eq:quomat1} and \eqref{eq:quomat2} yields 
	\[
	\mathrm{Tr}(Q_-) = n+1 \quad \text{ and } \quad
	\mathrm{Tr}(Q_+) = n+1+2\sigma.
	\]
	Thus, we obtain
	\[
	\mathrm{Tr}(M_{sym}) - \mathrm{Tr}(\mathcal{L}(G_S)) = \mathrm{Tr}(Q_+) - \mathrm{Tr}(Q_-) = 2\sigma. \qedhere
	\]
\end{proof}

\begin{example}\label{Kn_Example_min_max_equality}
	Let $G_S = \widehat{K_n},\: n\geq 2,$ be the complete full-loop graph. Consider its Aç{\i}kme\c{s}e lift $\widetilde{G}$, which is exactly the join $K_1 \nabla 2K_n$. Let $c=0$ and $q=1$.
	Then,
	\[
		\mathrm{Spec}(M_{sym}) = 
		\begin{bmatrix} 
		2n+1 & n+1 & 0 \\
		 1   & n-1 & 1	
		\end{bmatrix} \ \text{ and } \
		\mathrm{Spec}(\mathcal{L}(G_S)) =
		\begin{bmatrix} 
		n+1 & 1 \\
		n-1 & 1	
		\end{bmatrix}.
	\]
	Since $e_{2n+1} = 0$, we have 
	$\mathrm{Spec}_0(M_{sym}) =
	\begin{bmatrix} 
		2n+1 & n+1 \\
		1 	 & n-1
	\end{bmatrix}.$
	Thus, 
	\[
	\min(\mathrm{Spec}_0(M_{sym})) = n+1 =\max(\mathrm{Spec}(\mathcal{L}(G_S))),
	\] 
	the equality \eqref{eq:tightbound2} is satisfied.	
	From the combined spectrum  $\begin{bmatrix} 
		2n+1& n+1  & 1 & 0 \\
		 1  & 2n-2 & 1 & 1 
	\end{bmatrix},$ 
	the upper bound computes to:
	\begin{equation*}
		e_{2n+1} + \sum_{i=1}^n e_i - \sum_{i=n+1}^{2n} e_i = 0 + \Big( (2n+1) + (n-1)(n+1) \Big) - \Big( (n-1)(n+1) + 1 \Big)=2n. 
	\end{equation*}
	On the other hand, the trace difference is
	\begin{equation*}
		\mathcal{M}_1(M_{sym}, 0) - \mathcal{M}_1(\mathcal{L}(G_S), 0) = \Big( 2n+1 + (n-1)(n+1) + 0 \Big) - \Big( (n-1)(n+1) + 1 \Big) =2n,
	\end{equation*}
	verifying the upper bound \eqref{eq:tightbound1} is attained.
\end{example}

	\section{Equivariant Heat Character and Equivariant Laplacian Moments}
	\label{Sect.5}

In this section, we shall adopt the following setting and convention throughout. Let $G_S$ be a self-loop graph of order $N$ and size $m$, with underlying loopless graph $G$. Let $S\subseteq V(G)$ be the set of looped vertices with $\sigma=|S|$ and $s$ be the loop vector. Define the \textit{edge boundary} of $S$ in $G$ by
\begin{equation}\label{eq:edgeboundary}
\partial S := \bigl\{\{u,v\}\in E(G) \mid u\in S,\ v\notin S\bigr\}.
\end{equation}
Moreover, for $v\in V(G)$, define
\begin{equation}\label{eq:bS}
b_S(v):= \#\{u\in V(G) \mid \{u,v\}\in E(G),\ s_u\neq s_v\},
\end{equation}
\begin{equation}\label{eq:kappaS}
\kappa(S):=\sum_{v\in V(G)}b_S(v)^2.
\end{equation}
Indeed, these quantities are motivated by the following observation: \\
\textbf{\textit{Observation}.}
For each $v\in V(G)$,
\[
(\mathcal{L}(G)s)_v
=
\sum_{u:\{u,v\}\in E(G)}(s_v-s_u).
\]
There are two cases:\\ 
Case 1. If $v\in S$, then $s_v=1$, and the nonzero summands occur exactly when $u\notin S$, resulting in each such summand being $1$. Hence, 
$(\mathcal{L}(G)s)_v=b_S(v)$ for $v\in S.$\\	
Case 2. If $v\notin S$, then $s_v=0$, and the nonzero summands occur exactly when $u\in S,$ resulting in each such summand being $-1$. Hence,
$(\mathcal{L}(G)s)_v=-b_S(v)$ for $v\notin S.$

This implies that
\begin{equation}\label{eq:kappaS2}
\|\mathcal{L}(G)s\|^2 = \sum_{v\in V(G)}b_S(v)^2 = \kappa(S).
\end{equation}

\begin{definition}
	\label{equivheatchar}
	Let $\widetilde{G}$ be the A\c{c}{\i}kme\c{s}e lift of $G_S$ and $T$ be the involution matrix induced by the $\mathbb{Z}_2$-action on $\widetilde{G}$.
	Let $\rho:\mathbb{Z}_2\to \mathrm{GL}(\mathbb{R}^{\widetilde{V}})$ be the representation induced by the lift involution, with $\rho(1)=I$ and $\rho(g)=T$. For $t>0$, define the \textit{equivariant heat character} of $h$ as $\chi_t^{\widetilde{G}}:\mathbb{Z}_2 \to \mathbb{R}$ by
	\[
	\chi_t^{\widetilde{G}}(h) := \mathrm{Tr}\!\left(e^{-t\mathcal{L}(\widetilde{G})}\rho(h)\right),
	\qquad h\in\mathbb{Z}_2.
	\]
	In particular,
	\begin{equation}\label{eq:equivheatchar1}
	\chi_t^{\widetilde{G}}(1) =	\mathrm{Tr}\!\left(e^{-t\mathcal{L}(\widetilde{G})}\right),
	\qquad
	\chi_t^{\widetilde{G}}(g) =	\mathrm{Tr}\!\left(e^{-t\mathcal{L}(\widetilde{G})}T\right).
	\end{equation}
\end{definition}

\begin{theorem}
	\label{thm:isotypic_heat_decomposition}
	For every $t>0$, the equivariant heat characters split into the following traces:
	\begin{equation}\label{eq:equivheatchar2}
	\chi_t^{\widetilde{G}}(1) = \mathrm{Tr}(e^{-tM_{sym}}) + \mathrm{Tr}(e^{-t\mathcal{L}(G_S)}),
	\end{equation}
	\begin{equation}\label{eq:equivheatchar3}
	\chi_t^{\widetilde{G}}(g) = \mathrm{Tr}(e^{-tM_{sym}}) - \mathrm{Tr}(e^{-t\mathcal{L}(G_S)}).
	\end{equation}
	Consequently,
	\begin{equation}\label{eq:equivheatchar4}
	\mathrm{Tr}(e^{-tM_{sym}}) = \frac{\chi_t^{\widetilde{G}}(1)+\chi_t^{\widetilde{G}}(g)}{2},
	\qquad
	\mathrm{Tr}(e^{-t\mathcal{L}(G_S)}) = \frac{\chi_t^{\widetilde{G}}(1)-\chi_t^{\widetilde{G}}(g)}{2}.
	\end{equation}
	Moreover, for every $t>0$,
	\begin{equation}
	-\chi_t^{\widetilde{G}}(1) \le	\chi_t^{\widetilde{G}}(g) \le \chi_t^{\widetilde{G}}(1).
	\end{equation}
	Equivalently,
	$\chi_t^{\widetilde{G}}(1)\pm\chi_t^{\widetilde{G}}(g)\ge 0$.	
\end{theorem}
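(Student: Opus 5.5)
The plan is to pass everything through the orthogonal matrix $U=(U_-,U_+)$ of Theorem~\ref{orthogonalequiv}. By that theorem, $U^\top\mathcal{L}(\widetilde{G})U=\mathrm{diag}(\mathcal{L}(G_S),M_{sym})$. Since $e^{-tX}$ is defined by a power series (equivalently, by spectral functional calculus), it commutes with orthogonal conjugation and respects block-diagonal structure. Hence
\[
U^\top e^{-t\mathcal{L}(\widetilde{G})}U=\mathrm{diag}\bigl(e^{-t\mathcal{L}(G_S)},e^{-tM_{sym}}\bigr).
\]
Remark~\ref{REMARKorthogonalequiv} gives $U^\top TU=\mathrm{diag}(-I_N,I_{N+1})$, with the block sizes matching.

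For \eqref{eq:equivheatchar2} I will use cyclicity of the trace: $\mathrm{Tr}(e^{-t\mathcal{L}(\widetilde{G})})=\mathrm{Tr}(U^\top e^{-t\mathcal{L}(\widetilde{G})}U)$, which is the sum of the two block traces. For \eqref{eq:equivheatchar3}, since $UU^\top=I$, I will insert it to write
\[
\mathrm{Tr}(e^{-t\mathcal{L}(\widetilde{G})}T)=\mathrm{Tr}\bigl((U^\top e^{-t\mathcal{L}(\widetilde{G})}U)(U^\top TU)\bigr).
\]
This is the trace of $\mathrm{diag}(-e^{-t\mathcal{L}(G_S)},e^{-tM_{sym}})$, which gives the stated difference. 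Alternatively, this step follows exactly as in Theorem~\ref{momentdiff1}, with $\Phi_q$ replaced by the heat operator, through the projections $P_\pm=\tfrac12(I\pm T)$. The identities \eqref{eq:equivheatchar4} then follow by adding and subtracting \eqref{eq:equivheatchar2} and \eqref{eq:equivheatchar3} and dividing by $2$.

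For the inequality, by \eqref{eq:equivheatchar4} it suffices to show $\chi_t^{\widetilde{G}}(1)\pm\chi_t^{\widetilde{G}}(g)=2\,\mathrm{Tr}(e^{-tM_{sym}})$ or $2\,\mathrm{Tr}(e^{-t\mathcal{L}(G_S)})$ is nonnegative. Each of these traces is a sum of terms $e^{-t\lambda}>0$ over real eigenvalues $\lambda$ of the symmetric matrices $M_{sym}$ and $\mathcal{L}(G_S)$, so each is in fact strictly positive. Rearranging yields $-\chi_t^{\widetilde{G}}(1)\le\chi_t^{\widetilde{G}}(g)\le\chi_t^{\widetilde{G}}(1)$.

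There is no real obstacle here. The only point needing care is the justification that the matrix exponential commutes with orthogonal similarity and with the block decomposition. I would cite $f(Q^\top XQ)=Q^\top f(X)Q$ for spectral functional calculus, as already used for $\Phi_q$ in the paper, and check that the block ordering in $U^\top TU$ matches the ordering $(\mathcal{L}(G_S),M_{sym})$.
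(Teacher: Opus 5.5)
Your proposal is correct and follows the paper's proof essentially step for step. You conjugate by $U=(U_-,U_+)$ to block-diagonalize $e^{-t\mathcal{L}(\widetilde{G})}$, insert $UU^\top=I$ together with $U^\top TU=\mathrm{diag}(-I_N,I_{N+1})$ for the twisted trace, and use positivity of the block heat traces for the inequality; your observation that these traces are in fact strictly positive is a harmless strengthening of the paper's positive-semidefiniteness argument.
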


\begin{proof}
	By Theorem \ref{orthogonalequiv}(ii), there exists an orthogonal matrix $U$ satisfying 
	\[
	U^\top\mathcal{L}(\widetilde{G})U= \mathrm{diag}(\mathcal{L}(G_S),M_{sym}).
	\]
	Under the same orthogonal change of basis, by Remark~\ref{REMARKorthogonalequiv}, $U^\top T U=\mathrm{diag}(-I_N, I_{N+1}).$ Since matrix exponentials respect similarity, we obtain
	\[
	U^\top e^{-t\mathcal{L}(\widetilde{G})}U = e^{-tU^\top\mathcal{L}(\widetilde{G})U} =
	\begin{pmatrix}
		e^{-t\mathcal{L}(G_S)}&0\\
		0&e^{-tM_{sym}}
	\end{pmatrix}.
	\]
	By the invariance of trace under similarity, we obtain  \eqref{eq:equivheatchar2}.
	Using the same orthogonal basis,
	\begin{align*}
	U^\top e^{-t\mathcal{L}(\widetilde{G})}TU
	&= 	(U^\top e^{-t\mathcal{L}(\widetilde{G})} U)(U^\top T U) \\
	&= \begin{pmatrix}
		e^{-t\mathcal{L}(G_S)}&0\\
		0&e^{-tM_{sym}}
	\end{pmatrix}
	\begin{pmatrix}
		-I_N&0\\
		0&I_{N+1}
	\end{pmatrix} \\
	&=\begin{pmatrix}
		-e^{-t\mathcal{L}(G_S)}&0\\
		0&e^{-tM_{sym}}
	\end{pmatrix}.
	\end{align*}
	Since $\mathrm{Tr}\left(U^\top e^{-t\mathcal{L}(\widetilde{G})}TU \right) = \mathrm{Tr}\left( e^{-t\mathcal{L}(\widetilde{G})}T \right),$ we get  \eqref{eq:equivheatchar3}. The identities \eqref{eq:equivheatchar4} follow immediately.	Finally, since $M_{sym}$ and $\mathcal{L}(G_S)$ are real symmetric positive semidefinite	matrices, $e^{-tM_{sym}}$ and $e^{-t\mathcal{L}(G_S)}$ are also positive semidefinite, and their traces are nonnegative. Therefore, for every $t>0$,
	\[
	\chi_t^{\widetilde{G}}(1)+\chi_t^{\widetilde{G}}(g)	= 2\mathrm{Tr}(e^{-tM_{sym}}) \ge 0,
	\]
	\[
	\chi_t^{\widetilde{G}}(1)-\chi_t^{\widetilde{G}}(g)	= 2\mathrm{Tr}(e^{-t\mathcal{L}(G_S)}) \ge 0,
	\]
	which is equivalent to
	\[
	-\chi_t^{\widetilde{G}}(1) \le \chi_t^{\widetilde{G}}(g) \le \chi_t^{\widetilde{G}}(1). \qedhere
	\]
\end{proof}

In the following, we provide a stability estimate for the equivariant heat character, which essentially implies that
for fixed $t>0$ and $h\in\mathbb{Z}_2$, the map
\[
\mathcal{L}(\widetilde{G}) \longmapsto \chi_t^{\widetilde{G}}(h) = \mathrm{Tr}\!\left(e^{-t\mathcal{L}(\widetilde{G})}\rho(h)\right)
\]
is Lipschitz continuous with respect to the trace norm on the lifted Laplacian.

\begin{proposition}
	\label{prop:heat_character_lipschitz}
	Let $G_{S,1}$ and $G_{S,2}$ be two self-loop graphs of the same order $N$. Let $\widetilde{G}_1$ and $\widetilde{G}_2$ be their A\c{c}{\i}kme\c{s}e lifts, respectively, with the same vertex ordering $1,\dots,N, N+1,\dots,2N+1.$	Let $T$ be the common involution matrix which swaps the two copies and fixes the middle vertex. Then, for every $h\in\mathbb{Z}_2$ and every $t>0$,
	\[
	\left|\chi_t^{\widetilde{G}_1}(h)-\chi_t^{\widetilde{G}_2}(h)\right| \le t\|\mathcal{L}(\widetilde{G}_1)-\mathcal{L}(\widetilde{G}_2)\|_1,
	\]
	where $\|\cdot\|_1$ denotes the trace norm.
\end{proposition}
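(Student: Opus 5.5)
Write $L_1=\mathcal{L}(\widetilde{G}_1)$, $L_2=\mathcal{L}(\widetilde{G}_2)$ and $\Delta=L_1-L_2$. Since $\rho(1)=I$ and $\rho(g)=T$, the trace is linear and
\[
\chi_t^{\widetilde{G}_1}(h)-\chi_t^{\widetilde{G}_2}(h)=\mathrm{Tr}\bigl((e^{-tL_1}-e^{-tL_2})\rho(h)\bigr).
\]
The plan is to write the difference of heat semigroups as an integral via Duhamel's principle, and then bound the trace using H\"older's inequality for Schatten norms.

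\textbf{Duhamel step.} Consider $F(u)=e^{-(t-u)L_2}e^{-uL_1}$ for $u\in[0,t]$. Its derivative is
\[
F'(u)=e^{-(t-u)L_2}(L_2-L_1)e^{-uL_1}.
\]
Integrating from $0$ to $t$ gives
\[
e^{-tL_1}-e^{-tL_2}=-\int_0^t e^{-(t-u)L_2}\,\Delta\, e^{-uL_1}\,du .
\]
Substituting this into the trace and moving the trace inside the finite-dimensional integral yields
\[
\bigl|\chi_t^{\widetilde{G}_1}(h)-\chi_t^{\widetilde{G}_2}(h)\bigr|\le\int_0^t\bigl|\mathrm{Tr}\bigl(e^{-(t-u)L_2}\Delta e^{-uL_1}\rho(h)\bigr)\bigr|\,du.
\]

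\textbf{Norm bound.} We use $|\mathrm{Tr}(X)|\le\|X\|_1$ together with $\|AXB\|_1\le\|A\|_{\mathrm{op}}\|X\|_1\|B\|_{\mathrm{op}}$. This bounds each integrand by
\[
\|e^{-(t-u)L_2}\|_{\mathrm{op}}\,\|\Delta\|_1\,\|e^{-uL_1}\|_{\mathrm{op}}\,\|\rho(h)\|_{\mathrm{op}}.
\]
Since $L_1$ and $L_2$ are positive semidefinite (they are graph Laplacians), every eigenvalue of $e^{-sL_i}$ with $s\ge0$ lies in $(0,1]$, so the two semigroup factors have operator norm at most $1$. The matrix $\rho(h)$ is either $I$ or the permutation matrix $T$, and both are orthogonal, so $\|\rho(h)\|_{\mathrm{op}}=1$. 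The integrand is therefore at most $\|\Delta\|_1$, and integrating over $[0,t]$ gives the bound $t\|\Delta\|_1$.

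\textbf{Main obstacle.} The only point needing care is justifying the Duhamel identity and the norm inequalities. The identity follows from differentiating $F$ using the product rule and $\frac{d}{du}e^{-uL}=-Le^{-uL}=-e^{-uL}L$; this works even though $L_1$ and $L_2$ need not commute. The ideal property of the trace norm, in the form $\|AXB\|_1\le\|A\|_{\mathrm{op}}\|X\|_1\|B\|_{\mathrm{op}}$, follows from the singular-value inequality $s_j(AXB)\le\|A\|_{\mathrm{op}}\|B\|_{\mathrm{op}}s_j(X)$. Neither step uses the fact that $G_{S,1}$ and $G_{S,2}$ come from lifts; only the positive semidefiniteness of the Laplacians and the orthogonality of $\rho(h)$ enter.
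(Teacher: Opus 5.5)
Your proposal is correct and follows essentially the same argument as the paper: a Duhamel integral representation of $e^{-tL_1}-e^{-tL_2}$ (you integrate against the mirror-image interpolation, with $L_2$ and $L_1$ in the outer factors swapped, which is equally valid), contractivity of the heat semigroups, and the trace-norm ideal inequality. The only difference is cosmetic: you bound the trace of each integrand with $\rho(h)$ inside the integral, whereas the paper first bounds $\|e^{-tL_1}-e^{-tL_2}\|_1\le t\|\Delta\|_1$ and then applies $|\mathrm{Tr}(X\rho(h))|\le\|X\|_1\|\rho(h)\|$.
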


\begin{proof}
	In this proof, for notational simplicity, we write $L_i=\mathcal{L}(\widetilde{G}_i), i=1,2.$ Since the two lifts are under the same vertex ordering, the involution 	matrix $T$ is the same for both lifts. Hence, the representation $\rho:\mathbb{Z}_2\to \mathrm{GL}(\mathbb{R}^{2N+1})$ is common to both, with $\rho(1)=I$ and $\rho(g)=T.$	Therefore, by Definition \ref{equivheatchar},
	\[
	\chi_t^{\widetilde{G}_i}(h)	= \mathrm{Tr}\!\left(e^{-tL_i}\rho(h)\right), \qquad i=1,2.
	\]	
	By 	Duhamel's principle \cite[Chapter~III, Sect.~1]{engel2000oneparameter}, we have
	\[
	e^{-tL_1}-e^{-tL_2}	= -\int_0^t e^{-(t-r)L_1}(L_1-L_2)e^{-rL_2}\,dr.
	\]
	Since $L_1$ and $L_2$ are real symmetric positive semidefinite matrices, for every
	$a\ge 0$,
	\[
	\|e^{-aL_i}\| = \max_{\lambda \in \mathrm{Spec}(L_i)}e^{-a\lambda} \le 1, \qquad i=1,2.
	\]
	Using the trace-norm inequality, we obtain
	\begin{align*}
		\|e^{-tL_1}-e^{-tL_2}\|_1
		&\le \int_0^t \|e^{-(t-r)L_1}(L_1-L_2)e^{-rL_2}\|_1\,dr\\
		&\le \int_0^t \|e^{-(t-r)L_1}\|\,\|L_1-L_2\|_1\,\|e^{-rL_2}\|\,dr\\
		&\le \int_0^t \|L_1-L_2\|_1\,dr\\
		&= t\|L_1-L_2\|_1.
	\end{align*}
	
	Now, observe that
	\[
	\chi_t^{\widetilde{G}_1}(h)-\chi_t^{\widetilde{G}_2}(h)	= \mathrm{Tr}\!\left((e^{-tL_1}-e^{-tL_2})\rho(h)\right).
	\]
	Since $\rho(h)$ is orthogonal, $\|\rho(h)\|=1$. By the trace inequality, we obtain
	\[
	\left|\chi_t^{\widetilde{G}_1}(h)-\chi_t^{\widetilde{G}_2}(h)\right| 
	\le	\|e^{-tL_1}-e^{-tL_2}\|_1\|\rho(h)\|
	\le	t\|L_1-L_2\|_1.  \qedhere
	\]
\end{proof}

\vspace{0.5em}

Since the matrix exponential is given by its power series and the matrices are finite-dimensional, linearity of trace gives
\[
\chi_t^{\widetilde{G}}(1)
= \mathrm{Tr}\left(e^{-t\mathcal{L}(\widetilde{G})}\right)
= \sum_{k=0}^{\infty}
\frac{(-t)^k}{k!}
\mathrm{Tr}\!\left(\mathcal{L}(\widetilde{G})^k\right).
\]
In other words, the ordinary Laplacian spectral moments $M_k(\mathcal{L}(\widetilde{G}))$ in Theorem \ref{thm:spectral_moments} is the $k$-th heat coefficient of $\chi_t^{\widetilde{G}}(1).$ What about $\chi_t^{\widetilde{G}}(g)?$ This motivates the following definition where we incorporate the group action $T.$

\begin{definition}	\label{equivLaplacianmoment}
	Let $\widetilde{G}$ be the A\c{c}{\i}kme\c{s}e lift of $G_S.$ For $k\ge 0$, define the $k$-th \textit{equivariant Laplacian moment} of the
	A\c{c}{\i}kme\c{s}e lift by
	\[
	\Theta_k(\widetilde{G},T) := \mathrm{Tr}\!\left(\mathcal{L}(\widetilde{G})^kT\right).
	\]
\end{definition}

More precisely, 
\[
\chi_t^{\widetilde{G}}(g)
= \mathrm{Tr}\!\left(\sum_{k=0}^{\infty}\frac{(-t)^k}{k!}\mathcal{L}(\widetilde{G})^kT\right)
= \sum_{k=0}^{\infty}\frac{(-t)^k}{k!}\Theta_k(\widetilde{G},T).
\]
Thus, $\Theta_k(\widetilde{G},T)$ is the term associated to the $k$-th heat coefficient of the nontrivial equivariant heat character. Moreover, by Remark~\ref{REMARKorthogonalequiv}, the involution acts by $-I_N$ on the anti-symmetric block and by $I_{N+1}$ on the symmetric block implies that
\begin{equation}\label{eq:theta-block-difference}
	\Theta_k(\widetilde{G},T)
	= \mathrm{Tr}(M_{sym}^k) - \mathrm{Tr}(\mathcal{L}(G_S)^k).
\end{equation}
In contrast, the ordinary Laplacian spectral moment is the sum \eqref{eq:laplacianmoment1}
\[
M_k(\mathcal{L}(\widetilde{G})) = \mathrm{Tr}(M_{sym}^k)+\mathrm{Tr}(\mathcal{L}(G_S)^k).
\]
Consequently, take its difference with \eqref{eq:theta-block-difference}, we have
\[
M_k(\mathcal{L}(\widetilde{G})) = 2M_k(\mathcal{L}(G_S)) + \Theta_k(\widetilde{G},T).
\]
This indicates that the equivariant Laplacian moments $\Theta_k(\widetilde{G},T)$ are some kind of \textit{correction terms} of the ordinary Laplacian moments associated to the lift.

\begin{theorem} \label{thm:heat_coefficients_loop_placement}
	The first five terms of $\Theta_k(\widetilde{G},T)$ are
	\[
	\Theta_0(\widetilde{G},T)= 1,\quad 
	\Theta_1(\widetilde{G},T)= 2\sigma, \quad 	
	\Theta_2(\widetilde{G},T)= 4\sigma^2 + 4\sigma,
	\]
	\[
	\Theta_3(\widetilde{G},T) = 8\sigma^3 + 12\sigma^2 + 6\sigma + 6|\partial S|,
	\]
	\[
	\Theta_4(\widetilde{G},T) = 16\sigma^4 + 32\sigma^3+24\sigma^2+8\sigma + 16(\sigma+1)|\partial S| + 8\kappa(S).
	\]
	Consequently, $\chi_t^{\widetilde{G}}(g)$ determines $\sigma$, $|\partial S|$, and
	$\kappa(S)$ from its Taylor expansion at $t=0$.
\end{theorem}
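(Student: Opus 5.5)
By \eqref{eq:theta-block-difference}, $\Theta_k(\widetilde{G},T)=\mathrm{Tr}(M_{sym}^k)-\mathrm{Tr}(\mathcal{L}(G_S)^k)$. So the plan is to compute $\mathrm{Tr}(M_{sym}^k)$ blockwise from
\[
M_{sym}=\begin{pmatrix}2\sigma & -\sqrt2 s^\top\\ -\sqrt2 s & \mathcal{L}(G_S)\end{pmatrix}
\]
and subtract the base trace. For $k=0$ I would not use this formula; instead $\Theta_0=\mathrm{Tr}(T)=1$, since only the middle vertex is fixed (equivalently $(N+1)-N$). For $k=1,2$ the values follow at once from the proof of Theorem \ref{thm:spectral_moments}: $\mathrm{Tr}(M_{sym})-\mathrm{Tr}(\mathcal{L}(G_S))=2\sigma$, and \eqref{eq:TrMsym2} gives $4\sigma^2+4\sigma$. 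For $k=3$ the same proof gives $8\sigma^3+12\sigma^2+6s^\top\mathcal{L}(G_S)s$. It then remains to translate the loop quadratic forms into the invariants $|\partial S|$ and $\kappa(S)$.

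The translation uses $\mathcal{L}(G_S)=\mathcal{L}(G)+\mathrm{diag}(s)$ and $\mathrm{diag}(s)s=s$. First,
\[
s^\top\mathcal{L}(G_S)s=\sum_{\{u,v\}\in E}(s_u-s_v)^2+s^\top s=|\partial S|+\sigma,
\]
which yields $\Theta_3=8\sigma^3+12\sigma^2+6\sigma+6|\partial S|$. Second, $\mathcal{L}(G_S)s=\mathcal{L}(G)s+s$, so by \eqref{eq:kappaS2}
\[
\|\mathcal{L}(G_S)s\|^2=\kappa(S)+2s^\top\mathcal{L}(G)s+\sigma=\kappa(S)+2|\partial S|+\sigma .
\]

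For $k=4$, which is the only genuinely new computation, I would write $\mathrm{Tr}(M_{sym}^4)=\|M_{sym}^2\|_F^2$ and use the block form of $M_{sym}^2$ from the proof of Theorem \ref{thm:spectral_moments}. The three contributions are:
\[
\bigl(4\sigma^2+2\sigma\bigr)^2,\qquad 2\|{-2\sqrt2\sigma s}-\sqrt2\mathcal{L}(G_S)s\|^2=4\bigl(4\sigma^3+4\sigma s^\top\mathcal{L}(G_S)s+\|\mathcal{L}(G_S)s\|^2\bigr),
\]
\[
\|2ss^\top+\mathcal{L}(G_S)^2\|_F^2=4\sigma^2+4\|\mathcal{L}(G_S)s\|^2+\mathrm{Tr}(\mathcal{L}(G_S)^4).
\]
The last term $\mathrm{Tr}(\mathcal{L}(G_S)^4)$ cancels in $\Theta_4$. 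Substituting the two identities above and collecting terms should give
\[
16\sigma^4+32\sigma^3+24\sigma^2+8\sigma+16(\sigma+1)|\partial S|+8\kappa(S).
\]
The main risk is bookkeeping: the cross term $2\,s^\top\mathcal{L}(G)s$ inside $\|\mathcal{L}(G_S)s\|^2$ contributes to $|\partial S|$ twice, once from the off-diagonal blocks and once from the lower block.

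Finally, the Taylor expansion is $\chi_t^{\widetilde G}(g)=\sum_k(-t)^k\Theta_k/k!$, so the coefficients determine $\Theta_1,\dots,\Theta_4$. These are recovered in order: $\sigma=\Theta_1/2$, then $|\partial S|$ from $\Theta_3$ (which is linear in $|\partial S|$ once $\sigma$ is known), and then $\kappa(S)$ from $\Theta_4$.
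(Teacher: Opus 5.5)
Your proposal is correct and follows essentially the same route as the paper: the block-difference formula \eqref{eq:theta-block-difference}, reuse of the lift moment computations for $k\le 3$, a direct block computation of $\mathrm{Tr}(M_{sym}^4)$, and the identities $s^\top\mathcal{L}(G_S)s=|\partial S|+\sigma$ and $\|\mathcal{L}(G_S)s\|^2=\kappa(S)+2|\partial S|+\sigma$. Writing $\mathrm{Tr}(M_{sym}^4)=\|M_{sym}^2\|_F^2$ is just a tidy way of carrying out the paper's block multiplication, and it gives exactly the paper's intermediate formula $\Theta_4=16\sigma^4+32\sigma^3+8\sigma^2+16\sigma\, s^\top\mathcal{L}(G_S)s+8\, s^\top\mathcal{L}(G_S)^2s$.
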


\begin{proof}
	The identity $\Theta_0=\mathrm{Tr}(T)=1$ follows from the dimensions of the symmetric and anti-symmetric subspaces.	For $k\ge 1$, we use \eqref{eq:theta-block-difference}. The first three formulae follow immediately from \eqref{eq:theta-block-difference} with \eqref{eq:m1},\eqref{eq:m2}, and \eqref{eq:m3}, respectively.
	Since $\mathcal{L}(G_S)=\mathcal{L}(G)+\mathrm{diag}(s)$, we have
	\[
	s^\top\mathcal{L}(G_S)s=|\partial S|+\sigma,
	\]
	giving the formula for $\Theta_3$. For the fourth moment, direct block multiplication gives
	\[
	\Theta_4 = 16\sigma^4+32\sigma^3+8\sigma^2 + 16\sigma\,s^\top\mathcal{L}(G_S)s + 8s^\top\mathcal{L}(G_S)^2s.
	\]
	Then, $\Theta_4$ follows from the observation that
	\begin{align*}
	s^\top\mathcal{L}(G_S)^2s
	&= \|\mathcal{L}(G)s+s\|^2 \\
	&= 	\|\mathcal{L}(G)s\|^2 + 2s^\top\mathcal{L}(G)s + s^\top s \\
	&= \kappa(S)+2|\partial S|+\sigma,
	\end{align*}
	where the first summand follows from \eqref{eq:kappaS2}.
 \end{proof}

Since $\mathcal{L}(G_S)$ is a real symmetric $N\times N$ matrix, it defines a bounded self-adjoint operator on $\mathbb{R}^N.$ As inspired by the Weyl $m$-function (cf. \cite[\S 2]{teschl2000jacobi}) and the matrix element of the resolvent of a bounded operator (cf. \cite[pp 115, \S 5]{demuth2005determining}), we introduce the following definition.

\begin{definition}
	\label{loop_resolvent_form}
	Let $G_S$ be a self-loop graph. For $p>0$, define the \emph{loop resolvent}\footnote{Strictly speaking, this is the quadratic form or the matrix element of the resolvent operator evaluated at the loop vector $s$, we refer it in this short-form for simplicity.} of $G_S$ as a function $R_{G,S}(p) : (0,\infty) \to \mathbb{R}$ by
	\begin{equation}\label{eq:loop_resolvent_form}
	R_{G,S}(p):=s^\top(pI+\mathcal{L}(G_S))^{-1}s
	\end{equation}
	where $s$ is the loop vector and $(pI+\mathcal{L}(G_S))^{-1}$ is the positive resolvent of $\mathcal{L}(G_S)$ at $-p$.
\end{definition}

To illustrate this definition, let's consider a simple non-trivial example: 

\begin{example}	Let $G_S$ be the complete self-loop graph $(K_N)_S$, where $1\le \sigma=|S|\le N-1$. It is clear that
	\[
	\mathcal{L}(G_S)=NI_N-J_N+\mathrm{diag}(s).
	\]
	Let $x=(pI+\mathcal{L}(G_S))^{-1}s. $ By symmetry of $K_N$ with respect to $S$, the vector $x$ is constant on $S$ and constant on $V(K_N)\setminus S$. So, we may write
	\[
	x_i = 
	\begin{cases}
	a, &i\in S, \\	
	b, &i\notin S.
	\end{cases}
	\]
	Now, set $r=N-\sigma$. Since $(pI+\mathcal{L}(G_S))x=s,$
	we get the two equations
	\[
	\begin{cases}
	(p+N+1)a-(\sigma a+rb)=1, \\
	(p+N)b-(\sigma a+rb)=0.
	\end{cases} 
	= 
	\begin{cases}
	(p+r+1)a-rb=1, \\
	-\sigma a+(p+\sigma)b=0.
	\end{cases} 
	\]
	Solving these simultaneous equations, we obtain
	\[
	a = \frac{p+\sigma}{(p+\sigma)(p+r+1)-r\sigma} = \frac{p+\sigma}{p^2+(N+1)p+\sigma}.
	\]
	It follows that
	\[
	R_{K_N,S}(p)
	= s^\top(pI+\mathcal{L}(G_S))^{-1}s
	= s^\top x
	= \sum_{i\in S}x_i
	= \sigma a
	= \frac{\sigma (p+\sigma)}{p^2+(N+1)p+\sigma}.
	\]
\end{example}

In the following, we derive a Laplace-transform identity and a determinantal formula that relate the equivariant heat character  $\chi^{\widetilde{G}}_t(g)$ and the loop resolvent $R_{G,S}(p).$

\begin{theorem}	\label{thm:equivariant_resolvent}
	For $p>0$,
	\begin{equation}\label{eq:equivariant_resolvent1}
	\int_0^\infty e^{-pt}\chi_t^{\widetilde{G}}(g)\,dt = \frac{d}{dp}\log\left(p + 2\sigma - 2R_{G,S}(p)\right).
	\end{equation}
	Moreover, in the determinant form, we have
	\begin{equation}\label{eq:equivariant_resolvent2}
	\frac{\det(pI+M_{sym})}{\det(pI+\mathcal{L}(G_S))} = p + 2\sigma - 2s^\top(pI+\mathcal{L}(G_S))^{-1}s.
	\end{equation}
\end{theorem}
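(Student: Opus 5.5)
I will first establish the determinant identity \eqref{eq:equivariant_resolvent2} and then derive \eqref{eq:equivariant_resolvent1} from it by differentiating the logarithm.

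\emph{Determinant identity.} Recall from \eqref{eq:unitaryeq2} that, with $\mathcal{S}^\top\mathbf 1_\sigma=s$,
\[
pI+M_{sym}=\begin{pmatrix} p+2\sigma & -\sqrt2\, s^\top\\ -\sqrt2\, s & pI+\mathcal{L}(G_S)\end{pmatrix}.
\]
For $p>0$ the block $pI+\mathcal{L}(G_S)$ is positive definite, since $\mathcal{L}(G_S)$ is positive semidefinite by \eqref{eq:quadratic-form}, and hence it is invertible. The Schur complement formula taken with respect to this lower-right block then gives
\[
\det(pI+M_{sym})=\det(pI+\mathcal{L}(G_S))\bigl(p+2\sigma-2s^\top(pI+\mathcal{L}(G_S))^{-1}s\bigr).
\]
Dividing by $\det(pI+\mathcal{L}(G_S))$ yields \eqref{eq:equivariant_resolvent2}. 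By Definition~\ref{loop_resolvent_form}, the right-hand side equals $p+2\sigma-2R_{G,S}(p)$.

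\emph{Laplace transform.} I start from \eqref{eq:equivheatchar3}, which gives $\chi_t^{\widetilde G}(g)=\sum_{\nu}e^{-t\nu}-\sum_{\lambda}e^{-t\lambda}$, where $\nu$ runs over $\mathrm{Spec}(M_{sym})$ and $\lambda$ over $\mathrm{Spec}(\mathcal{L}(G_S))$. All these eigenvalues are nonnegative, so for $p>0$ each term can be integrated separately, using $\int_0^\infty e^{-pt}e^{-t\nu}\,dt=1/(p+\nu)$. This gives
\[
\int_0^\infty e^{-pt}\chi_t^{\widetilde G}(g)\,dt=\sum_\nu\frac{1}{p+\nu}-\sum_\lambda\frac{1}{p+\lambda}
=\frac{d}{dp}\log\det(pI+M_{sym})-\frac{d}{dp}\log\det(pI+\mathcal{L}(G_S)).
\]
The right-hand side is $\frac{d}{dp}\log$ of the quotient of determinants, and by \eqref{eq:equivariant_resolvent2} that quotient is $p+2\sigma-2R_{G,S}(p)$. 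This proves \eqref{eq:equivariant_resolvent1}.

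\emph{Main obstacle.} The only delicate point is that the logarithm must be well defined, i.e.\ $p+2\sigma-2R_{G,S}(p)>0$. This holds because it equals a ratio of determinants of positive definite matrices: $pI+M_{sym}$ is positive definite for $p>0$ since $M_{sym}$ is positive semidefinite, being a compression of $\mathcal{L}(\widetilde G)$ by Theorem~\ref{orthogonalequiv}. Both determinants are therefore positive. Everything else is routine, and termwise integration is justified because the sums are finite and each integral converges absolutely.
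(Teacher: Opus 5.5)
Your proposal is correct and follows essentially the same route as the paper: a Schur complement with respect to the block $pI+\mathcal{L}(G_S)$ gives the determinant ratio, and the Laplace transform of \eqref{eq:equivheatchar3} is identified with $\frac{d}{dp}\log$ of that ratio. The differences are minor. You integrate eigenvalue by eigenvalue instead of using the semigroup resolvent formula and Jacobi's formula, and you explicitly check that $p+2\sigma-2R_{G,S}(p)>0$, which the paper leaves implicit.
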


\begin{proof}
	By Theorem~\ref{thm:isotypic_heat_decomposition}, we have $\chi_t^{\widetilde{G}}(g) = \mathrm{Tr}(e^{-tM_{sym}})-\mathrm{Tr}(e^{-t\mathcal{L}(G_S)}).$	
	Multiplying by $e^{-pt}$ gives
	\[
	e^{-pt}\chi_t^{\widetilde{G}}(g) = \mathrm{Tr}(e^{-t(pI+M_{sym})}) - \mathrm{Tr}(e^{-t(pI+\mathcal{L}(G_S))}).
	\]
	Since $p>0$ and both $M_{sym}$ and $\mathcal{L}(G_S)$ are positive semidefinite, the matrices $pI+M_{sym}$ and $pI+\mathcal{L}(G_S)$ are positive definite. In particular, if $A$ is either $M_{sym}$ or $\mathcal{L}(G_S)$, then
	\[
	\left|\mathrm{Tr}\left(e^{-t(pI+A)}\right)\right| \le \dim(A)e^{-pt}.
	\]
	Thus, the scalar functions $t \mapsto \mathrm{Tr}\left(e^{-t(pI+A)}\right)$ are absolutely integrable on $[0,\infty)$, namely,
	\[
	\int_0^\infty \left| \mathrm{Tr}\left(e^{-t(pI+A)}\right)\right| dt \leq \dim(A) \int_0^\infty e^{-pt}dt = \frac{\dim(A)}{p} < \infty.
	\]	
	Moreover, since all matrices are finite-dimensional, the matrix integral is	entrywise, and the trace is a finite sum of diagonal entries. Hence,
	\[
	\int_0^\infty \mathrm{Tr}(e^{-t(pI+A)})\,dt	= \mathrm{Tr}\!\left(\int_0^\infty e^{-t(pI+A)}\,dt\right).
	\]
	By a semigroup resolvent formula \cite[Chapter~II, Sect.~1]{engel2000oneparameter}, applied to the semigroup $e^{-tA}$, we have
	\[
	\int_0^\infty e^{-t(pI+A)}\,dt = (pI+A)^{-1}.
	\]	
	Consequently, 
	\begin{equation}\label{eq:inteptchi}
	\int_0^\infty e^{-pt}\chi_t^{\widetilde{G}}(g)\,dt = \mathrm{Tr}((pI+M_{sym})^{-1})	- \mathrm{Tr}((pI+\mathcal{L}(G_S))^{-1}).
	\end{equation}
	Recall that for an invertible differentiable matrix family $A(p)$, Jacobi's formula gives
	\[
	\frac{d}{dp}\log\det A(p) = \mathrm{Tr}\!\left(A(p)^{-1}A'(p)\right).
	\] 
	With $A(p)=pI+M_{sym},$ we have
	\begin{equation}\label{eq:logdet1}
	\frac{d}{dp}\log\det(pI+M_{sym}) = \mathrm{Tr}((pI+M_{sym})^{-1}).
	\end{equation}
	Similarly, for $\mathcal{L}(G_S),$ we deduce that
	\begin{equation}\label{eq:logdet2}
	\frac{d}{dp}\log\det(pI+\mathcal{L}(G_S)) = \mathrm{Tr}((pI+\mathcal{L}(G_S))^{-1}).
	\end{equation}
	Taking the difference \eqref{eq:logdet1} and \eqref{eq:logdet2}, together with linearity of differentiation, yields
	\begin{equation}\label{eq:logdet3}
	\mathrm{Tr}((pI+M_{sym})^{-1}) - \mathrm{Tr}((pI+\mathcal{L}(G_S))^{-1}) = \frac{d}{dp} \log\frac{\det(pI+M_{sym})}{\det(pI+\mathcal{L}(G_S))}.
	\end{equation}
	Combining \eqref{eq:inteptchi} and \eqref{eq:logdet3} gives
	\begin{equation}\label{eq:equivariant_resolvent3}
	\int_0^\infty e^{-pt}\chi_t^{\widetilde{G}}(g)\,dt = \frac{d}{dp}\log\frac{\det(pI+M_{sym})}{\det(pI+\mathcal{L}(G_S))}.
	\end{equation}
		
	It remains to compute the determinant ratio. By Theorem \ref{orthogonalequiv}(ii), we get
	\begin{equation}\label{eq:pI+Msym}
	pI+M_{sym} =
	\begin{pmatrix}
		p+2\sigma&-\sqrt{2}s^\top\\
		-\sqrt{2}s&pI+\mathcal{L}(G_S)
	\end{pmatrix}.
	\end{equation}
	Since $p>0$ and $\mathcal{L}(G_S)$ is positive semidefinite, $pI+\mathcal{L}(G_S)$ is invertible. Applying the Schur complement determinant formula to \eqref{eq:pI+Msym} yields 
	\begin{align*}
		\det(pI+M_{sym})
		&= \det(pI+\mathcal{L}(G_S))\left(p+2\sigma	- (-\sqrt{2}s^\top)(pI+\mathcal{L}(G_S))^{-1}(-\sqrt{2}s)\right)\\
		&= \det(pI+\mathcal{L}(G_S))\left(p+2\sigma	- 2s^\top(pI+\mathcal{L}(G_S))^{-1}s\right).
	\end{align*}
	Dividing both sides by $\det(pI+\mathcal{L}(G_S))\neq 0$, we obtain \eqref{eq:equivariant_resolvent2}, for which \eqref{eq:equivariant_resolvent1} follows immediately by combining  \eqref{eq:equivariant_resolvent2}, \eqref{eq:equivariant_resolvent3}, and \eqref{eq:loop_resolvent_form}.
\end{proof}

\begin{theorem}
	\label{thm:loop_resolvent_spectral_measure_and_moments}
	Let $G_S$ be a self-loop graph and $s$ be the loop vector. Let $R_{G,S}(p)$ be the loop resolvent of $G_S.$ Furthermore, consider $\mathcal{L}(G_S)=\sum_{\lambda}\lambda P_\lambda$, where $P_\lambda$ denotes the orthogonal projection onto the $\lambda$-eigenspace. Then, the following hold:
	
	\begin{enumerate}[(i)]
		\item $R_{G,S}(p)$ has the spectral representation
		\begin{equation} \label{eq:RGSspecrep}
		R_{G,S}(p) =\sum_{\lambda} \frac{\|P_\lambda s\|^2}{p+\lambda}.
		\end{equation}
		Consequently, the right side gives a rational continuation of $R_{G,S}$ to $\mathbb{C}$, with poles precisely at those points $p=-\lambda$ for which $P_\lambda s\neq 0$.
		
		\item $R_{G,S}(p)$ has the Laurent expansion at $p=\infty$
		\begin{equation} \label{eq:RGSLaurent}
		R_{G,S}(p) = \sum_{k=0}^{\infty} \frac{(-1)^k m_k(G,S)}{p^{k+1}},
		\end{equation}
		valid for all sufficiently large $|p|$, where $m_k(G,S):=s^\top\mathcal{L}(G_S)^k s$ is the $k$-th loop moment.	
		\end{enumerate}
\end{theorem}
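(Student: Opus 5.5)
The plan is to use the spectral theorem for the real symmetric matrix $\mathcal{L}(G_S)$ and then expand the resolvent in two regimes. Write $\mathcal{L}(G_S)=\sum_\lambda \lambda P_\lambda$, with the $P_\lambda$ mutually orthogonal symmetric idempotents summing to $I_N$. For $p>0$, each $p+\lambda>0$ because $\mathcal{L}(G_S)$ is positive semidefinite (Lemma \ref{lem:positive-definite} and the quadratic form \eqref{eq:quadratic-form}), so $pI+\mathcal{L}(G_S)=\sum_\lambda (p+\lambda)P_\lambda$ is invertible. Its inverse is $\sum_\lambda (p+\lambda)^{-1}P_\lambda$, which one checks by multiplying out and using $P_\lambda P_\mu=\delta_{\lambda\mu}P_\lambda$. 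Taking the quadratic form at $s$ and using $s^\top P_\lambda s=s^\top P_\lambda^\top P_\lambda s=\|P_\lambda s\|^2$ gives \eqref{eq:RGSspecrep}.

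The continuation claim in (i) then follows because the right-hand side of \eqref{eq:RGSspecrep} is a finite sum of simple fractions, hence a rational function on $\mathbb{C}$. Its only possible poles are at $p=-\lambda$. The pole at $p=-\lambda$ is actually present exactly when its coefficient $\|P_\lambda s\|^2$ is nonzero. This uses the fact that the distinct eigenvalues give distinct pole locations, so no cancellation between terms can occur. Uniqueness of the continuation is immediate, since it agrees with $R_{G,S}$ on $(0,\infty)$ and rational functions agreeing on an interval coincide.

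For (ii), the plan is to expand each term geometrically for $|p|>\max_\lambda|\lambda|=\|\mathcal{L}(G_S)\|$:
\[
\frac{1}{p+\lambda}=\sum_{k\ge0}\frac{(-1)^k\lambda^k}{p^{k+1}}.
\]
Summing over the finitely many $\lambda$ gives
\[
R_{G,S}(p)=\sum_{k\ge0}\frac{(-1)^k}{p^{k+1}}\sum_\lambda \lambda^k\|P_\lambda s\|^2 .
\]
Finally one identifies $\sum_\lambda\lambda^k\|P_\lambda s\|^2=s^\top\big(\sum_\lambda\lambda^kP_\lambda\big)s=s^\top\mathcal{L}(G_S)^ks=m_k(G,S)$. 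Equivalently, this is the Neumann series $(pI+\mathcal{L})^{-1}=p^{-1}\sum_k(-\mathcal{L}/p)^k$, which converges for $|p|>\|\mathcal{L}(G_S)\|$.

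No step is a real obstacle. The only points needing care are justifying the interchange of the finite sum with the convergent series, which is trivial by finiteness, and stating the precise domain of validity of the Laurent expansion, namely $|p|>\lambda_{\max}(\mathcal{L}(G_S))$.
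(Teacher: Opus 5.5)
Your proposal is correct and follows essentially the same route as the paper. For (i) you use the functional calculus $(pI+\mathcal{L}(G_S))^{-1}=\sum_\lambda (p+\lambda)^{-1}P_\lambda$ together with $s^\top P_\lambda s=\|P_\lambda s\|^2$, and for (ii) the Neumann series for $|p|$ beyond the spectral radius; your observation that distinct eigenvalues give distinct pole locations, so no cancellation can occur, makes explicit a point the paper leaves implicit.
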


\begin{proof}
	We first prove the spectral representation. Since $\mathcal{L}(G_S)=\sum_{\lambda}\lambda P_\lambda,$ functional calculus gives
	\[
	(pI+\mathcal{L}(G_S))^{-1} = \sum_{\lambda}\frac{1}{p+\lambda}P_\lambda.
	\]
	Since each $P_\lambda$ is an orthogonal projection, we have $P_\lambda=P_\lambda^\top=P_\lambda^2.$
	Hence,
	$s^\top P_\lambda s
	= s^\top P_\lambda^2s
	= (P_\lambda s)^\top(P_\lambda s)
	= \|P_\lambda s\|^2.
	$
	Therefore,
	\[
	R_{G,S}(p)
	= s^\top(pI+\mathcal{L}(G_S))^{-1}s
	= \sum_{\lambda}
	\frac{s^\top P_\lambda s}{p+\lambda} = \sum_{\lambda}
	\frac{\|P_\lambda s\|^2}{p+\lambda}.
	\]
	The right-hand side is a finite sum of rational functions and agrees with $R_{G,S}(p)$ for every $p>0$. Hence, it defines a rational continuation of	$R_{G,S}$ to $\mathbb C$. Since $\|P_\lambda s\|^2>0$ if and only if	$P_\lambda s\neq0$, the poles of this continuation are precisely those
	points $p=-\lambda$ for which $P_\lambda s\neq0$.
	
	As for the second claim, consider the rational continuation obtained in	Part~(i). Since $\mathcal{L}(G_S)$ is a finite symmetric matrix, the Neumann expansion is valid whenever $|p|$ is larger than	the spectral radius of $\mathcal{L}(G_S)$. Hence,
	\[
	(pI+\mathcal{L}(G_S))^{-1}
	= \frac{1}{p}\left(I+\frac{1}{p}\mathcal{L}(G_S)\right)^{-1}
	= \frac{1}{p}\sum_{k=0}^{\infty}(-1)^k\frac{\mathcal{L}(G_S)^k}{p^k}.
	\]
	Multiplying on the left and right by $s^\top$ and $s$, respectively, gives \eqref{eq:RGSLaurent}. Since a rational function has a unique Laurent expansion at infinity, the coefficients of this expansion determine all moments $m_k(G,S)$.
\end{proof}

\begin{remark}
	Let $\mu$ be a finite positive measure on $\mathbb{R}$. Its Stieltjes transform (cf. e.g., \cite[\S 2, Sect.~2.1]{teschl2000jacobi}) is the function \[ \mathcal{S}_{\mu}(z) = \int_{\mathbb{R}}\frac{1}{\lambda-z}\,d\mu(\lambda), \quad z\notin \operatorname{supp}(\mu). \] Equivalently, if the support of $\mu$ lies in $[0,\infty)$, then for $p>0,$ one often writes the same transform on the negative real axis as 
	\[ 
	F_{\mu}(p) = \int_{[0,\infty)}\frac{1}{p+\lambda}\,d\mu(\lambda). 
	\] 
	This is precisely the form appearing in the loop resolvent. Indeed, by the spectral theorem, the self-loop Laplacian has the spectral decomposition $ \mathcal{L}(G_S)=\sum_{\lambda}\lambda P_\lambda, $ where $P_\lambda$ is the orthogonal projection onto the $\lambda$-eigenspace. The loop vector $s$ therefore determines the finite positive measure 
	\[ 
	\mu_{G,S} = \sum_{\lambda} \|P_\lambda s\|^2\delta_\lambda. 
	\]  
	Hence, it follows from Theorem \ref{thm:loop_resolvent_spectral_measure_and_moments} that
	\[ 
	R_{G,S}(p) = \sum_{\lambda} \frac{\|P_\lambda s\|^2}{p+\lambda} = \int_{[0,\infty)} \frac{1}{p+\lambda}\,d\mu_{G,S}(\lambda). 
	\] 
	This means that $R_{G,S}$ is the Stieltjes transform of the spectral measure $\mu_{G,S}$. This interpretation is useful because the Stieltjes transform encodes the locations and weights of the atoms of $\mu_{G,S}$. In the present finite graph setting, the poles of $R_{G,S}$ occur exactly at $p=-\lambda$, where $\lambda$ is an eigenvalue of $\mathcal{L}(G_S)$ satisfying $P_\lambda s\neq 0$, and the corresponding residue is $\|P_\lambda s\|^2$.
\end{remark}

	From \eqref{eq:RGSspecrep}, $R_{G,S}$ is a rational function, thus it is continuous and differentiable on $(0,\infty).$	Now, suppose that $G_S$ is pseudo-connected. Since $\mathcal{L}(G_S)$ is invertible,  we have $\lim_{p \to 0} (pI+\mathcal{L}(G_S))^{-1} = \mathcal{L}(G_S)^{-1}.$
	Hence, $R_{G,S}$ admits a continuous extension to $p=0$, given by
	\[
	R_{G,S}(0) = \lim_{p \to 0}R_{G,S}(p) = s^\top\mathcal{L}(G_S)^{-1}s.
	\]	
    On the other hand, since $\det(M_{sym})=0$ and $\det(\mathcal{L}(G_S))\neq0,$ when $p \to 0$ and applying  \eqref{eq:equivariant_resolvent2}, a simple deduction yields the following result.
	\begin{corollary} \label{cor:Linv}
	For a pseudo-connected self-loop graph $G_S$ and loop vector $s,$ it holds $s^\top\mathcal{L}(G_S)^{-1}s=\sigma.$
	\end{corollary}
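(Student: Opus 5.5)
The plan is to pass to the limit $p\to 0^+$ in the determinant identity \eqref{eq:equivariant_resolvent2} of Theorem~\ref{thm:equivariant_resolvent}:
\[
\det(pI+M_{sym}) = \det(pI+\mathcal{L}(G_S))\bigl(p+2\sigma-2R_{G,S}(p)\bigr),\qquad p>0.
\]
Both sides are polynomials (or continuous functions) in $p$. The left side tends to $\det(M_{sym})$ as $p\to0^+$.

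First I will check that $\det(M_{sym})=0$. The constant vector $\mathbf 1\in\mathbb{R}^{2N+1}$ lies in $\ker\mathcal{L}(\widetilde G)$ and satisfies $T\mathbf 1=\mathbf 1$, so $\mathbf 1\in(\mathbb{R}^{\widetilde V})^+$. By Theorem~\ref{orthogonalequiv}, $U_+^\top\mathbf 1\neq 0$ is then a null vector of $M_{sym}$. One can also verify this directly: the vector $(\sqrt2,\mathbf 1_N)$ is annihilated by $M_{sym}$, since
\[
2\sigma\sqrt2-\sqrt2\,s^\top\mathbf 1=0,\qquad -2s+\mathcal{L}(G_S)\mathbf 1=-2s+s=-s\,?
\]
This computation has to be redone carefully. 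The correct kernel vector is $(1,\tfrac{1}{\sqrt2}\mathbf 1_N)$. Indeed $\mathcal{L}(G_S)\mathbf 1=s$, because $\mathcal{L}(G)\mathbf 1=0$ and the diagonal loop term contributes $s$. The first row then gives $2\sigma-\sqrt2\cdot\tfrac{1}{\sqrt2}\sigma=\sigma$, which is not zero. So I will instead take the kernel vector to be $U_+^\top\mathbf 1=(1,\sqrt2\,\mathbf 1_N)$. The first row gives $2\sigma-\sqrt2\cdot\sqrt2\,\sigma=0$, and the remaining rows give $-\sqrt2 s+\sqrt2 s=0$. Hence $\det M_{sym}=0$.

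Next, pseudo-connectedness together with Lemma~\ref{lem:positive-definite} shows that $\mathcal{L}(G_S)$ is positive definite. Therefore $\det(pI+\mathcal{L}(G_S))\to\det\mathcal{L}(G_S)>0$, and $R_{G,S}(p)\to s^\top\mathcal{L}(G_S)^{-1}s$ by continuity of the inverse, as noted just before the corollary. Letting $p\to0^+$ in the identity yields
\[
0=\det\mathcal{L}(G_S)\,\bigl(2\sigma-2s^\top\mathcal{L}(G_S)^{-1}s\bigr).
\]
Dividing by the nonzero determinant gives the claim $s^\top\mathcal{L}(G_S)^{-1}s=\sigma$.

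As a sanity check, I can avoid the limit entirely. Since $\mathcal{L}(G_S)\mathbf 1_N=s$, we have $\mathcal{L}(G_S)^{-1}s=\mathbf 1_N$, and therefore $s^\top\mathcal{L}(G_S)^{-1}s=s^\top\mathbf 1_N=\sigma$. I would present this one-line argument as the main proof, or at least as confirmation. The only real obstacle in the limit route is correctly identifying $\ker M_{sym}$, and this direct observation removes the need for it.
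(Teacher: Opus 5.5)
Your proposal is correct. The limit argument you give first is the paper's own proof: the paper lets $p\to 0$ in \eqref{eq:equivariant_resolvent2}, using $\det(M_{sym})=0$ and $\det\mathcal{L}(G_S)\neq 0$.

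The one-line argument you want to make the main proof is a genuinely different and more elementary route. From $\mathcal{L}(G)\mathbf 1_N=0$ and $\mathrm{diag}(s)\mathbf 1_N=s$ you get $\mathcal{L}(G_S)\mathbf 1_N=s$. Once Lemma~\ref{lem:positive-definite} gives invertibility, this yields $\mathcal{L}(G_S)^{-1}s=\mathbf 1_N$, hence $s^\top\mathcal{L}(G_S)^{-1}s=s^\top\mathbf 1_N=\sigma$. This route needs neither the Schur-complement identity, nor continuity of $R_{G,S}$ at $p=0$, nor the singularity of $M_{sym}$.

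The two routes rest on the same fact. The second block row of $M_{sym}(1,\sqrt2\,\mathbf 1_N)^\top=0$ reads exactly $\mathcal{L}(G_S)\mathbf 1_N=s$. So the zero mode of $M_{sym}$ that the paper invokes is this identity in disguise, and the determinant limit extracts $s^\top\mathcal{L}(G_S)^{-1}s$ from it indirectly.

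What each approach buys:
\begin{itemize}
\item The paper's route presents the corollary as the $p\to 0$ boundary value of the resolvent--determinant identity, tying it to the zero eigenvalue of the lift.
\item Yours produces the explicit vector $\mathcal{L}(G_S)^{-1}s=\mathbf 1_N$, which gives more for free. For example, $s^\top\mathcal{L}(G_S)^{-2}s=N$ and $s^\top\mathcal{L}(G_S)^{-3}s=\mathbf 1_N^\top\mathcal{L}(G_S)^{-1}\mathbf 1_N$, so \eqref{eq:Ieq3} becomes $-2\,\mathbf 1_N^\top\mathcal{L}(G_S)^{-1}\mathbf 1_N/(2N+1)$.
\end{itemize}

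In a write-up, delete the abandoned kernel-vector trials. The display built on $(\sqrt2,\mathbf 1_N)$ asserts the false equality $2\sigma\sqrt2-\sqrt2\,s^\top\mathbf 1=0$ (the left side is $\sqrt2\,\sigma$). Only $U_+^\top\mathbf 1=(1,\sqrt2\,\mathbf 1_N)$ is correct.
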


\begin{theorem}
	\label{thm:visible_spectral_subsystem}
	Let $\mathcal{K}_{G,S} := \mathrm{span}\{s,\mathcal{L}(G_S)s,\mathcal{L}(G_S)^2s,\ldots\}$ be the Krylov subspace generated by the loop vector $s$.
	Then, 
	\begin{enumerate}[(i)] 
		\item The subspace $\mathcal{K}_{G,S}$ is the smallest $\mathcal{L}(G_S)$-invariant subspace containing $s$. 
		\item For every $p>0$, 
		$(pI+\mathcal{L}(G_S))^{-1}s\in \mathcal{K}_{G,S}.$ 
		\item The loop resolvent $R_{G,S}(p)$ determines the orthogonal equivalence class of the triple 
		$\left(\mathcal{K}_{G,S}, \mathcal{L}(G_S)|_{\mathcal{K}_{G,S}}, s \right).$
		\item Two self-loop graphs $(G,S)$ and $(H,T)$ have the same loop resolvents, 
		\[ 
		R_{G,S}(p)=R_{H,T}(p), \quad \forall p>0, 
		\] 
		if and only if the triples $ \left( \mathcal{K}_{G,S}, \mathcal{L}(G_S)|_{\mathcal{K}_{G,S}}, s_S \right) $ and $ \left( \mathcal{K}_{H,T}, \mathcal{L}(H_T)|_{\mathcal{K}_{H,T}}, s_T \right)$ are orthogonally equivalent.
	\end{enumerate}
\end{theorem}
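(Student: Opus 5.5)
We work throughout in $\mathbb{R}^N$ with $L:=\mathcal{L}(G_S)$, and treat the four parts in order.

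\emph{Part (i).} $\mathcal{K}_{G,S}$ is a subspace containing $s$, and $L\mathcal{K}_{G,S}\subseteq\mathcal{K}_{G,S}$ because $L$ sends $L^ks$ to $L^{k+1}s$. If $W$ is any $L$-invariant subspace with $s\in W$, then induction on $k$ gives $L^ks\in W$ for all $k$, so $\mathcal{K}_{G,S}\subseteq W$. Since $\mathbb{R}^N$ is finite-dimensional, $\mathcal{K}_{G,S}$ is in fact spanned by $s,Ls,\dots,L^{N-1}s$.

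\emph{Part (ii).} Fix $p>0$; then $pI+L$ is invertible by positive semidefiniteness. By Cayley--Hamilton, $(pI+L)^{-1}$ is a polynomial in $pI+L$, hence a polynomial in $L$. Therefore $(pI+L)^{-1}s$ is a linear combination of the vectors $L^ks$, which lie in $\mathcal{K}_{G,S}$. Alternatively, one can use the spectral formula $(pI+L)^{-1}=\sum_\lambda (p+\lambda)^{-1}P_\lambda$ together with the fact that each $P_\lambda$ is a polynomial in $L$ (Lagrange interpolation).

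\emph{Part (iii).} This is the main step. By Theorem~\ref{thm:loop_resolvent_spectral_measure_and_moments}(ii), $R_{G,S}$ determines every loop moment $m_k(G,S)=s^\top L^ks$. For all $j,k$ we have $\langle L^js,L^ks\rangle=m_{j+k}$ by symmetry of $L$, so the Gram matrix of the generating family $\{L^ks\}_{k=0}^{N-1}$ is the Hankel matrix $H=(m_{j+k})$. Given another triple $(\mathcal{K}',L',s')$ with the same moments, we define
\[
V\colon \textstyle\sum_k c_kL^ks\mapsto \sum_k c_kL'^ks'.
\]
Equal Gram matrices show that $V$ is well defined and isometric: if $\sum c_kL^ks=0$, then $c^\top Hc=0$, so $\sum c_kL'^ks'$ has norm zero. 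Hence $V$ is an orthogonal isomorphism $\mathcal{K}\to\mathcal{K}'$ with $Vs=s'$ and $VL=L'V$ on generators, and therefore on all of $\mathcal{K}$. In particular $\dim\mathcal{K}=\operatorname{rank}H=\dim\mathcal{K}'$ is forced. The one point needing care is running the Gram argument over the index range $0\le k\le \max(N,N')-1$, so that both Krylov spaces are spanned; this is harmless because all moments agree. The obstacle is really bookkeeping: ``determines'' must be read as ``equal resolvents imply orthogonally equivalent triples'', which is exactly the construction of $V$.

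\emph{Part (iv).} The forward implication is part (iii). For the converse, suppose $V$ is orthogonal with $Vs_S=s_T$ and $VL|_{\mathcal{K}}=L'|_{\mathcal{K}'}V$. By part (ii), $(pI+L)^{-1}s_S=(pI+L|_{\mathcal{K}})^{-1}s_S$, since $\mathcal{K}$ is invariant and $pI+L$ restricted to it is invertible. Then
\[
R_{G,S}(p)=\langle s_S,(pI+L|_{\mathcal{K}})^{-1}s_S\rangle=\langle Vs_S,(pI+L'|_{\mathcal{K}'})^{-1}Vs_S\rangle=R_{H,T}(p)
\]
for all $p>0$. Equivalently, the moments $s^\top L^k s$ are preserved by $V$, and one concludes via the Laurent expansion \eqref{eq:RGSLaurent} and uniqueness of rational continuation.
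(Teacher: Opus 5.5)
Your proof is correct. Parts (i), (ii) and (iv) follow the paper's argument in substance. For (ii) the paper notes that $pI+\mathcal{L}(G_S)$ restricts to an invertible map of the invariant subspace $\mathcal{K}_{G,S}$, while you write the inverse as a polynomial in $\mathcal{L}(G_S)$ via Cayley--Hamilton; the two are interchangeable.

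The genuine difference is in (iii).

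\emph{The paper's route.} It works with the spectral measure. From the partial-fraction form in Theorem~\ref{thm:loop_resolvent_spectral_measure_and_moments}(i) it reads off the visible eigenvalues $\lambda$ with $P_\lambda s\neq 0$ and their weights $\|P_\lambda s\|^2$. It then shows $\mathcal{K}_{G,S}=\bigoplus_{P_\lambda s\neq 0}\mathrm{span}\{P_\lambda s\}$ by polynomial interpolation. Finally it asserts that the triple is determined by the weighted data $\{(\lambda,\|P_\lambda s\|^2)\}$; the orthogonal equivalence is left implicit (send $P_\lambda s/\|P_\lambda s\|$ to its counterpart).

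\emph{Your route.} You use the Laurent coefficients $m_k=s^\top\mathcal{L}(G_S)^k s$ instead. You observe that the Gram matrix of the Krylov generators is the Hankel matrix $(m_{j+k})$. You then construct the intertwining isometry $L^k s\mapsto L'^k s'$ directly. This is the standard moment-problem (GNS-type) construction.

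\emph{What each buys.} Your argument never diagonalizes, produces the equivalence explicitly, and yields $\dim\mathcal{K}_{G,S}$ as the rank of the Hankel matrix. The paper's argument gives a concrete canonical model: a diagonal operator on $\mathbb{R}^r$ with $s\mapsto(\|P_\lambda s\|)_\lambda$. It also identifies $\dim\mathcal{K}_{G,S}$ with the number of poles of $R_{G,S}$, which of course equals your Hankel rank.

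One bookkeeping point, which you essentially flag yourself. To verify $VL=L'V$ on the top generator $L^{K-1}s$, with $K=\max(N,N')$, you need $V(L^K s)=L'^K s'$. So $V$ should be defined, and checked well defined, on combinations of $L^k s$ for all $k\ge 0$, or at least $0\le k\le K$, rather than only $k\le K-1$. This is legitimate because all moments coincide, so the Gram argument works on any finite index range.
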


\begin{proof} By definition, it is clear that $\mathcal{L}(G_S)\mathcal{K}_{G,S} \subseteq \mathcal{K}_{G,S},$ i.e., $\mathcal{K}_{G,S}$ is $\mathcal{L}(G_S)$-invariant. If $\mathcal{W}$ is any $\mathcal{L}(G_S)$-invariant subspace containing $s$, then it must also contain $ \mathcal{L}(G_S)s,$ $\mathcal{L}(G_S)^2s,$ $\ldots.$ Hence, $ \mathcal{K}_{G,S}\subseteq \mathcal{W}, $ which shows that $\mathcal{K}_{G,S}$ is the smallest $\mathcal{L}(G_S)$-invariant subspace containing $s$. Next, since $\mathcal{K}_{G,S}$ is invariant under $\mathcal{L}(G_S)$, it is also invariant under $pI + \mathcal{L}(G_S),$ i.e., $(pI + \mathcal{L}(G_S))\mathcal{K}_{G,S} \subseteq \mathcal{K}_{G,S}.$ Since $pI + \mathcal{L}(G_S)$ is invertible, so is $pI + \mathcal{L}(G_S)|_{\mathcal{K}_{G,S}}.$ Thus, 
\[
(pI + \mathcal{L}(G_S))^{-1} \mathcal{K}_{G,S} \subseteq \mathcal{K}_{G,S}.
\]
Since $s \in \mathcal{K}_{G,S}, $ we have $(pI+\mathcal{L}(G_S))^{-1}s\in \mathcal{K}_{G,S}.$

Since $\mathcal{L}(G_S)$ is symmetric, the orthogonal complement $\mathcal{K}_{G,S}^\perp$ is also $\mathcal{L}(G_S)-$invariant, i.e., $\mathcal{L}(G_S)\mathcal{K}_{G,S}^\perp \subseteq \mathcal{K}_{G,S}^\perp.$ This shows that  
\[
\mathbb{R}^{V(G)}= \mathcal{K}_{G,S} \oplus \mathcal{K}_{G,S}^\perp.
\]
Thus, all $\mathcal{L}(G_S),$ $pI+\mathcal{L}(G_S),$ and $(pI+\mathcal{L}(G_S))^{-1} $ split accordingly. Any eigenvector lying entirely in $\mathcal{K}_{G,S}^{\perp}$ is orthogonal to $s$ and therefore has zero weight in $R_{G,S}$.

Since the coefficients $\|P_\lambda s\|^2$ are nonnegative, the  expansion of $R_{G,S}(p)$ in Theorem \ref{thm:loop_resolvent_spectral_measure_and_moments}(i) determines exactly the eigenvalues $\lambda$ such that $P_\lambda s\neq 0,$ together with their weights $\alpha_\lambda=\|P_\lambda s\|^2.$ For every $k\geq 0$,
\[
\mathcal{L}(G_S)^k s = \sum_{\lambda}\lambda^k P_\lambda s,
\]
so $\mathcal{K}_{G,S}$ is contained in the span of the nonzero vectors $P_\lambda s$, i.e., 
\[
\mathcal{K}_{G,S} = \bigoplus_{\lambda:\,P_\lambda s\neq 0} \mathrm{span}\{P_\lambda s\}.
\] 
Conversely, since the set of eigenvalues is finite, each projection $P_\lambda s$ can be obtained from $s,\ \mathcal{L}(G_S)s,\ \mathcal{L}(G_S)^2s,\ldots$ by polynomial interpolation on the eigenvalues. Hence, the two spans coincide.

Observe that the restriction $\mathcal{L}(G_S)|_{\mathcal{K}_{G,S}}$ acts by multiplication by $\lambda$ over $\mathrm{span}\{P_\lambda s\}$, while the loop vector decomposes as $s=\sum_{\lambda:\,P_\lambda s\neq 0}P_\lambda s$ with weight $\alpha_\lambda.$ Thus, the triple $\left(\mathcal{K}_{G,S},\mathcal{L}(G_S)|_{\mathcal{K}_{G,S}},s\right)$ is determined, up to orthogonal equivalence, by the finite weighted spectral data
$\{(\lambda,\alpha_\lambda) \mid \alpha_\lambda>0\}.$ The claim (iii) follows.

Now, to show (iv), suppose the triples of $(G,S)$ and $(H,T)$ are orthogonally equivalent. Thus, there exists an orthogonal map
$U:\mathcal{K}_{G,S}\to \mathcal{K}_{H,T}$ such that
\[
Us_S = s_T
\quad
\text{ and }
\quad 
U\left(\mathcal{L}(G_S)|_{\mathcal{K}_{G,S}}\right) = \left(\mathcal{L}(H_T)|_{\mathcal{K}_{H,T}}\right)U,
\]
\[
U\left(pI+\mathcal{L}(G_S)|_{\mathcal{K}_{G,S}}\right)^{-1} = \left(pI+\mathcal{L}(H_T)|_{\mathcal{K}_{H,T}}\right)^{-1}U.
\]
 Using these, we deduce that
\begin{align*}
R_{G,S}(p)
&=\langle s_S, \left(pI+\mathcal{L}(G_S)|_{\mathcal{K}_{G,S}}\right)^{-1}s_S \rangle \\
&= \langle U s_S, U\left(pI+\mathcal{L}(G_S)|_{\mathcal{K}_{G,S}}\right)^{-1}s_S \rangle \\
&= \langle s_T, \left(pI+\mathcal{L}(H_T)|_{\mathcal{K}_{H,T}}\right)^{-1} Us_S \rangle \\
&= \langle s_T, \left(pI+\mathcal{L}(H_T)|_{\mathcal{K}_{H,T}}\right)^{-1} s_T \rangle \\
&=R_{H,T}(p).
\end{align*}

Hence, orthogonally equivalent triples have the same loop resolvent. The reverse implication follows from the preceding  argument: equality of the two resolvents gives the same visible eigenvalues and the same weights, hence the same triple up to orthogonal equivalence. This completes the proof.
\end{proof}

\subsection{Special case}

From Definition \ref{equivLaplacianmoment} and Theorem \ref{thm:heat_coefficients_loop_placement}, one observes that the equivariant heat character $\chi_t^{\widetilde{G}}(g)$ actually encodes much information regarding the underlying graph. In this subsection, we explore and seek solution to the question: \textit{what happens to the equivariant heat character $\chi_t^{\widetilde{G}}(g)$ when the non-empty loop set $S$ is exactly $V(G)$ for connected $G_S$, or in general, when $S$ is the union of connected components of a possibly disconnected $G_S$?} It turns out that in this case, for each $t >0,$ $\chi_t^{\widetilde{G}}(g)$ reduces to a quantity that depends only on $\sigma,$ not relying on any boundary information. Algebraically, the loop vector $s$ must necessarily lie in the kernel of $\mathcal{L}(G).$

\begin{theorem}
	\label{thm:character_special_case}
	Let $G_S$ be a (possibly disconnected) self-loop graph with loop vector $s.$ The following statements are equivalent:
	\begin{enumerate}[(i)]
		\item $S$ is a union of connected components of $G$;
		\item $\mathcal{L}(G_S)s=s$;
		\item $R_{G,S}(p)=\sigma/(p+1)$ for every $p>0$;
		\item $\chi_t^{\widetilde{G}}(g)=1+e^{-(2\sigma+1)t}-e^{-t}$ for every $t>0$.
	\end{enumerate}
\end{theorem}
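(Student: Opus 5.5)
We prove the cycle (i)$\Rightarrow$(ii)$\Rightarrow$(iii)$\Rightarrow$(iv)$\Rightarrow$(i), using the decomposition $\mathcal{L}(G_S)=\mathcal{L}(G)+\mathrm{diag}(s)$. Since $\mathrm{diag}(s)s=s$, condition (ii) is equivalent to $\mathcal{L}(G)s=0$. By the Observation preceding Definition~\ref{equivheatchar}, $\|\mathcal{L}(G)s\|^2=\kappa(S)$, and $\kappa(S)=0$ means $b_S(v)=0$ for every $v$, i.e.\ no edge of $G$ joins $S$ to its complement. That is exactly (i). This gives (i)$\Leftrightarrow$(ii) directly. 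Alternatively, $s^\top\mathcal{L}(G)s=|\partial S|$ together with positive semidefiniteness gives the same conclusion.

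For (ii)$\Rightarrow$(iii): if $\mathcal{L}(G_S)s=s$, then $(pI+\mathcal{L}(G_S))s=(p+1)s$, so $(pI+\mathcal{L}(G_S))^{-1}s=s/(p+1)$. Hence $R_{G,S}(p)=s^\top s/(p+1)=\sigma/(p+1)$.

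For (iii)$\Rightarrow$(iv): substitute into Theorem~\ref{thm:equivariant_resolvent}. We get
\[
p+2\sigma-\frac{2\sigma}{p+1}=\frac{p(p+1)+2\sigma p}{p+1}=\frac{p(p+2\sigma+1)}{p+1},
\]
whose logarithmic derivative is $\frac1p+\frac{1}{p+2\sigma+1}-\frac{1}{p+1}$. This is the Laplace transform of $1+e^{-(2\sigma+1)t}-e^{-t}$. Since $\chi_t^{\widetilde G}(g)$ is a finite exponential sum, hence continuous, uniqueness of the Laplace transform (Lerch's theorem) yields (iv).

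For (iv)$\Rightarrow$(i): reverse the argument. Given (iv), compute its Laplace transform and compare with Theorem~\ref{thm:equivariant_resolvent}. This shows $\frac{d}{dp}\log\bigl(p+2\sigma-2R_{G,S}(p)\bigr)=\frac{d}{dp}\log\frac{p(p+2\sigma+1)}{p+1}$, so the two arguments of the logarithms agree up to a multiplicative constant $C>0$. Letting $p\to\infty$ and using the Laurent expansion in Theorem~\ref{thm:loop_resolvent_spectral_measure_and_moments}(ii), where $R_{G,S}(p)=O(1/p)$, both sides behave like $p+O(1)$, so $C=1$. Solving then gives $R_{G,S}(p)=\sigma/(p+1)$.

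A simpler route to (i) avoids $R_{G,S}$ entirely: expand (iv) in $t$ and compare with Theorem~\ref{thm:heat_coefficients_loop_placement}. The Taylor coefficients of $1+e^{-(2\sigma+1)t}-e^{-t}$ give $\Theta_3=(2\sigma+1)^3-1=8\sigma^3+12\sigma^2+6\sigma$. Comparing with $\Theta_3=8\sigma^3+12\sigma^2+6\sigma+6|\partial S|$ forces $|\partial S|=0$, which is (i).

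The only delicate point is the inverse step from (iv). The $\Theta_3$ comparison sidesteps it cleanly, so I would use that, while noting the (iii)$\Rightarrow$(ii) implication via moments for completeness. From $R_{G,S}(p)=\sigma/(p+1)$, the Laurent coefficients give $m_1=s^\top\mathcal{L}(G_S)s=\sigma$ and $m_2=\|\mathcal{L}(G_S)s\|^2=\sigma$. Then $\|\mathcal{L}(G_S)s-s\|^2=m_2-2m_1+\sigma=0$, which is (ii).
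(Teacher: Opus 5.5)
Your proof is correct and follows essentially the same route as the paper: the same substitution into the Laplace-transform identity for (iii)$\Rightarrow$(iv), and the same $\Theta_3$ comparison forcing $|\partial S|=0$ for (iv)$\Rightarrow$(i). The differences are local: you get (i)$\Leftrightarrow$(ii) from $\|\mathcal{L}(G)s\|^2=\kappa(S)$ (or $s^\top\mathcal{L}(G)s=|\partial S|$) rather than from $\ker\mathcal{L}(G)$ being the componentwise-constant vectors, and your extra (iii)$\Rightarrow$(ii) (redundant once your cycle closes) uses the Laurent moments $m_1=m_2=\sigma$ in place of the paper's pole argument, which is equally valid.
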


\begin{proof}
	Observe that since  $s_i\in\{0,1\},$ we have $\mathcal{L}(G_S)s = \mathcal{L}(G)s+\mathrm{diag}(s)s = \mathcal{L}(G)s+s.$ Hence, $\mathcal{L}(G_S)s=s$ if and only if $\mathcal{L}(G)s=0$. For the ordinary graph Laplacian, $\mathcal{L}(G)x=0$ if and only if $x$ is constant on each connected component of $G$. Since $s$ is a $0$-$1$ vector, which means that every connected component of $G$ is either contained in $S$ or disjoint from $S$. Thus, $S$ is a union of connected components of $G$. This proves  $(i)\Leftrightarrow(ii)$.
		
	Assume $(ii)$. Then, $(pI+\mathcal{L}(G_S))s=(p+1)s.$ Since $p \neq -1,$ it follows that
	\begin{equation}\label{eq:R_GS_sigma}
		R_{G,S}(p) = s^\top(pI+\mathcal{L}(G_S))^{-1}s = \frac{s^\top s}{p+1} =	\frac{\sigma}{p+1}.
	\end{equation}
	This proves $(ii)\Rightarrow(iii)$.
	
	Conversely, assume $(iii)$ and consider \eqref{eq:RGSspecrep} in Theorem~\ref{thm:loop_resolvent_spectral_measure_and_moments}. By Theorem~\ref{thm:loop_resolvent_spectral_measure_and_moments}(i), both sides extend to rational functions of $p$. Since they agree for every $p>0$, they agree identically as rational functions. Since all coefficients $\|P_\lambda s\|^2$ are nonnegative and
	\[
	\sum_\lambda \|P_\lambda s\|^2=s^\top s=\sigma,
	\]
	the only possible nonzero spectral weight of $s$ is at $\lambda=1$.	Indeed, any term with $\lambda\neq1$ would produce a pole at $p=-\lambda\neq-1$, whereas the right-hand side has no such pole. Hence, $P_1s=s$, and therefore $\mathcal{L}(G_S)s=s$. This proves $(iii)\Rightarrow(ii)$.
	
	We now show $(iii) \Rightarrow (iv)$. Substituting \eqref{eq:R_GS_sigma} into \eqref{eq:equivariant_resolvent1} gives
	\begin{align*}
		\int_0^\infty e^{-pt}\chi_t^{\widetilde{G}}(g)\,dt
		&= \frac{d}{dp} \log\left(p+2\sigma-\frac{2\sigma}{p+1}\right) \\
		&= \frac{d}{dp} \log\left(\frac{p(p+2\sigma+1)}{p+1}\right) \\
		&= \frac{1}{p}+\frac{1}{p+2\sigma+1}-\frac{1}{p+1}.
	\end{align*}
	Taking inverse Laplace transforms, we obtain
	\begin{align*}
		\chi_t^{\widetilde{G}}(g)
		&= \mathcal{L}^{-1}_{p\to t}\left[\frac{1}{p}+\frac{1}{p+2\sigma+1}-\frac{1}{p+1}\right] \\
		&= \mathcal{L}^{-1}_{p\to t}\left[\int_0^\infty e^{-p\tau}\left(	1+e^{-(2\sigma+1)\tau}-e^{-\tau}\right)\,d\tau	\right] \\
		&= 1+e^{-(2\sigma+1)t}-e^{-t}.
	\end{align*}
	
	Finally, assume $(iv)$. Observe that the Taylor expansion of $\chi_t^{\widetilde{G}}(g)$ gives the third heat coefficient
	\[
	\Theta_3=(2\sigma+1)^3-1 = 8\sigma^3 + 12\sigma^2 + 6\sigma.
	\]
	On the other hand, by Theorem~\ref{thm:heat_coefficients_loop_placement}, we have $\Theta_3=8\sigma^3+12\sigma^2+6\sigma+6|\partial S|,$ which forces $|\partial S|=0$. Hence, no edge of $G$ joins $S$ to $V(G)\setminus S$, so $S$ is a union of connected components of $G$. This proves
	$(iv)\Rightarrow(i)$.
\end{proof}

After establishing the results above, let's denote this character as 
\[
\overline{\chi}_t^{\widetilde{G}}(g):=1+e^{-(2\sigma+1)t}-e^{-t}, \quad t>0,
\] to signify the special case.

\begin{corollary}
	If $G_S$ is connected and $\emptyset\neq S\neq V(G)$, then $\chi_t^{\widetilde{G}}(g)<\overline{\chi}_t^{\widetilde{G}}(g)$ for all sufficiently small $t>0$. More precisely,
	\begin{equation}\label{eq:chidiff1}
	\chi_t^{\widetilde{G}}(g)-\overline{\chi}_t^{\widetilde{G}}(g)
	= -|\partial S|t^3 + \frac{(16\sigma+16)|\partial S|+8\kappa(S)}{24}t^4 + O(t^5).
	\end{equation}
\end{corollary}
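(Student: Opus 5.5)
The plan is to compare the two functions coefficient by coefficient in their Taylor expansions at $t=0$. Both $\chi_t^{\widetilde{G}}(g)$ and $\overline{\chi}_t^{\widetilde{G}}(g)$ are finite sums of exponentials, hence entire in $t$. The expansion of $\chi_t^{\widetilde{G}}(g)$ has already been identified as
\[
\chi_t^{\widetilde{G}}(g)=\sum_{k\ge 0}\frac{(-t)^k}{k!}\Theta_k(\widetilde{G},T).
\]
For $\overline{\chi}_t^{\widetilde{G}}(g)=1+e^{-(2\sigma+1)t}-e^{-t}$, the analogous coefficients are
\[
\overline{\Theta}_0=1,\qquad \overline{\Theta}_k=(2\sigma+1)^k-1 \quad (k\ge 1),
\]
since $\overline{\Theta}_0=1+1-1=1$. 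These are exactly the values of $\Theta_k$ in the case $|\partial S|=0$ and $\kappa(S)=0$.

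For the first step I would expand $(2\sigma+1)^k-1$ binomially for $k=1,2,3,4$:
\begin{align*}
\overline{\Theta}_1&=2\sigma,\\
\overline{\Theta}_2&=4\sigma^2+4\sigma,\\
\overline{\Theta}_3&=8\sigma^3+12\sigma^2+6\sigma,\\
\overline{\Theta}_4&=16\sigma^4+32\sigma^3+24\sigma^2+8\sigma.
\end{align*}
I would then subtract these from the formulas in Theorem~\ref{thm:heat_coefficients_loop_placement}. The differences are
\[
\Theta_k-\overline{\Theta}_k=0 \ (k\le 2),\qquad
\Theta_3-\overline{\Theta}_3=6|\partial S|,\qquad
\Theta_4-\overline{\Theta}_4=16(\sigma+1)|\partial S|+8\kappa(S).
\]
Multiplying by $(-1)^k/k!$ gives $-|\partial S|t^3$ from $k=3$ and $\frac{(16\sigma+16)|\partial S|+8\kappa(S)}{24}t^4$ from $k=4$. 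The remainder is $O(t^5)$ because the difference of two entire functions whose Taylor series agree through order $4$ is $O(t^5)$ near $0$, by Taylor's theorem with the analytic remainder. This establishes \eqref{eq:chidiff1}.

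For the second step, the strict inequality for small $t$ requires $|\partial S|\ge 1$. Since $G_S$ is connected and $\emptyset\neq S\neq V(G)$, the set $S$ is not a union of connected components of $G$. By Theorem~\ref{thm:character_special_case} (i)$\Leftrightarrow$(iv), or directly, some edge of $G$ must join $S$ to its complement, so $|\partial S|>0$. Then the difference equals $-|\partial S|t^3\,(1+O(t))$, which is negative for all sufficiently small $t>0$.

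I expect no step to be a real obstacle. The only points needing care are the bookkeeping of signs $(-1)^k/k!$, and justifying that the $O(t^5)$ bound holds uniformly near $0$, which follows from analyticity.
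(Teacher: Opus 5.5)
Your proposal is correct and follows the paper's own proof. Both compare the Taylor coefficients $(2\sigma+1)^k-1$ of $\overline{\chi}_t^{\widetilde{G}}(g)$ with the $\Theta_k$ of Theorem~\ref{thm:heat_coefficients_loop_placement}, obtaining zero differences for $k\le 2$, and $6|\partial S|$ and $16(\sigma+1)|\partial S|+8\kappa(S)$ for $k=3,4$, and both use $|\partial S|>0$ for the sign. You also make explicit the $O(t^5)$ remainder and the small-$t$ sign argument, which the paper leaves implicit.
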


\begin{proof}
	By assumption, we have $|\partial S|>0.$
	Using the expansion
	\[
	\overline{\chi}_t^{\widetilde{G}}(g)
	=1+\sum_{k=1}^{\infty}\frac{(-t)^k}{k!}\left((2\sigma+1)^k-1\right)
	\]
	and comparing with the coefficients up in Theorem~\ref{thm:heat_coefficients_loop_placement}, we observe that the terms up to order two coincide and
	\[
	\Theta_3-\left((2\sigma+1)^3-1\right) = 6|\partial S|,
	\]
	\[
	\Theta_4-\left((2\sigma+1)^4-1\right) =	(16\sigma+16)|\partial S|+8\kappa(S),
	\]
	where \eqref{eq:chidiff1} follows.
\end{proof}

If $G_S$ is a full-loop connected self-loop graph, we may then use Theorem \ref{thm:character_special_case} to deduce its counterpart $\chi_t^{\widetilde{G}}(1)$ as given in the corollary below. The proof is omitted.

\begin{corollary}\label{fullloopchi(1)}
	Let $G$ be a connected simple graph of order $N$ and let $\widehat{G}$ be its full-loop graph. If $0=\theta_1<\theta_2\le \cdots\le \theta_N$ are the Laplacian eigenvalues of $G$,
	then 
	\[
	\chi_t^{\widetilde{G}}(1) = 1+e^{-(2N+1)t}+e^{-t} + 2\sum_{j=2}^{N}e^{-(\theta_j+1)t}.
	\]
\end{corollary}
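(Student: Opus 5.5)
We will use the isotypic splitting \eqref{eq:equivheatchar4} of Theorem~\ref{thm:isotypic_heat_decomposition}, which gives
\[
\chi_t^{\widetilde{G}}(1) = 2\,\mathrm{Tr}(e^{-t\mathcal{L}(G_S)}) + \chi_t^{\widetilde{G}}(g).
\]
For a connected $G$ with $S=V(G)$, condition (i) of Theorem~\ref{thm:character_special_case} holds trivially and $\sigma=N$. Part (iv) of that theorem therefore yields $\chi_t^{\widetilde{G}}(g)=1+e^{-(2N+1)t}-e^{-t}$. What remains is to compute the heat trace of the anti-symmetric block.

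In the full-loop case we have $\mathrm{diag}(s)=I_N$, so by \eqref{eq:LaplacianLoop1}, $\mathcal{L}(\widehat{G})=\mathcal{L}(G)+I_N$. Its eigenvalues are therefore $\theta_j+1$ for $j=1,\dots,N$, with the same eigenvectors as $\mathcal{L}(G)$. Since $G$ is connected, $\theta_1=0$ is simple. Hence
\[
\mathrm{Tr}(e^{-t\mathcal{L}(G_S)})=e^{-t}+\sum_{j=2}^N e^{-(\theta_j+1)t}.
\]

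Substituting this into the identity above gives
\[
\chi_t^{\widetilde{G}}(1)=2e^{-t}+2\sum_{j=2}^N e^{-(\theta_j+1)t}+1+e^{-(2N+1)t}-e^{-t},
\]
which simplifies to the claimed formula.

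None of these steps is a real obstacle. The only points needing care are two small checks. First, $\sigma=N$ must be used in the exponent $2\sigma+1$. Second, the $-e^{-t}$ term coming from $\chi_t^{\widetilde{G}}(g)$ combines with the $2e^{-t}$ term to leave a single $+e^{-t}$.

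As a sanity check, we can verify $\mathrm{Spec}(M_{sym})=\{0,\,2N+1\}\cup\{\theta_j+1\}_{j\ge2}$ directly. Take the constant eigenvector $\mathbf{1}_N$ of $\mathcal{L}(G)$. On $\mathrm{span}\{e_0,\mathbf{1}_N/\sqrt N\}$, the matrix $M_{sym}$ reduces to
\[
\begin{pmatrix}2N&-\sqrt{2N}\\-\sqrt{2N}&1\end{pmatrix},
\]
whose eigenvalues are $0$ and $2N+1$. On the orthogonal complement of $\mathbf{1}_N$ inside $\mathbb{R}^N$, $M_{sym}$ acts as $\mathcal{L}(G)+I$. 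This reproduces the same $\chi_t^{\widetilde{G}}(1)$ independently of Theorem~\ref{thm:character_special_case}.
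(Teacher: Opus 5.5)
Your proof is correct and is the route the paper indicates: the proof is omitted there, but the text says to deduce the corollary from Theorem~\ref{thm:character_special_case}. That means combining $\chi_t^{\widetilde{G}}(g)=1+e^{-(2N+1)t}-e^{-t}$ with $\chi_t^{\widetilde{G}}(1)=\chi_t^{\widetilde{G}}(g)+2\,\mathrm{Tr}(e^{-t\mathcal{L}(\widehat{G})})$ and $\mathcal{L}(\widehat{G})=\mathcal{L}(G)+I_N$, exactly as you do. Your direct computation of $\mathrm{Spec}(M_{sym})=\{0,2N+1\}\cup\{\theta_j+1\}_{j\ge 2}$ is a valid independent confirmation.
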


Thus, as one observes \textit{a priori} that in the full-loop case any self-loop graph of order $N$ and $|S|=N$ share the same $\chi_t^{\widetilde{G}}(g);$ whilst $\chi_t^{\widetilde{G}}(1)$ is the invariant that contains the spectral information of the underlying graph.

\begin{example}	
	\begin{enumerate}[(i)]
		\item If $G$ is the complete graph $K_N$, then
		\[
		\chi_t^{\widetilde{K_N}}(1) = 1 + e^{-(2N+1)t}+e^{-t}+2(N-1)e^{-(N+1)t}.
		\]
		
		\item If $G$ is the complete bipartite graph $K_{a,b}$ with $N=a+b$, then
		\[
		\chi_t^{\widetilde{K_{a,b}}}(1) = 1 + e^{-(2N+1)t}+e^{-t} + 2e^{-(N+1)t} + 2(b-1)e^{-(a+1)t}	+ 2(a-1)e^{-(b+1)t}.
		\]
		
		\item If $G$ is the cycle graph $C_N$ with $N\ge 3$, then
		\[
		\chi_t^{\widetilde{G}}(1)
		= 1+e^{-(2N+1)t}+e^{-t}	+ 2\sum_{j=1}^{N-1}\exp\!\left(-\left(3-2\cos\left(\frac{2\pi j}{N}\right)\right)t\right).
		\]		
	\end{enumerate}
\end{example}

We end this section with two results which might be of independent interest, one on the join of full-loop graphs, and one on connected regular graph in relation to its line graph. 

\begin{proposition}
	\label{prop:character_join_full_loop}
	Let $G_1$ and $G_2$ be nonempty simple graphs of orders $N_1$ and $N_2$, respectively. Let $0=\alpha_1\leq \alpha_2\leq\cdots\leq\alpha_{N_1}$	
	and $0=\beta_1\leq \beta_2\leq\cdots\leq\beta_{N_2}$ be the Laplacian eigenvalues of $G_1$ and $G_2$, respectively. Let $N=N_1+N_2.$
	Then, for every $t>0$, $\chi_t^{\widetilde{\widehat{G_1}\vee\widehat{G_2}}}(g) = 1+e^{-(2N+1)t}-e^{-t},$ and
	\[
	\chi_t^{\widetilde{\widehat{G_1}\vee\widehat{G_2}}}(1) 
	= 1+e^{-(2N+1)t}+e^{-t} + 2e^{-(N+1)t} + 2\sum_{i=2}^{N_1}e^{-(\alpha_i+N_2+1)t} + 2\sum_{j=2}^{N_2}e^{-(\beta_j+N_1+1)t}.
	\]
\end{proposition}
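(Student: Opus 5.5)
The plan is to reduce both formulas to results already established, using that the join of two full-loop graphs is again a full-loop graph. Set $H=G_1\vee G_2$, a simple graph of order $N=N_1+N_2$. Since every vertex of $\widehat{G_1}$ and of $\widehat{G_2}$ carries a loop and the join adds only ordinary edges, $\widehat{G_1}\vee\widehat{G_2}=\widehat{H}$ is the full-loop graph on $H$, with $S=V(H)$ and $\sigma=N$. Moreover, $H$ is connected: $G_1,G_2$ are nonempty, so every vertex of $G_1$ is adjacent to every vertex of $G_2$. Hence $S$ is (the union of) the single connected component of $H$.

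For the nontrivial character, apply Theorem~\ref{thm:character_special_case}, (i)$\Rightarrow$(iv), with $\sigma=N$. This gives $\chi_t^{\widetilde{\widehat H}}(g)=1+e^{-(2N+1)t}-e^{-t}$ directly.

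For the trivial character, invoke Corollary~\ref{fullloopchi(1)} for the connected graph $H$. This requires the Laplacian spectrum $0=\theta_1<\theta_2\le\cdots\le\theta_N$ of $H$. I would use the classical join formula, which holds for simple graphs: $\mathrm{Spec}(\mathcal{L}(G_1\vee G_2))$ consists of $0$, $N$, $\alpha_i+N_2$ for $2\le i\le N_1$, and $\beta_j+N_1$ for $2\le j\le N_2$. To justify it briefly, note $\mathcal{L}(H)=\mathrm{diag}(\mathcal{L}(G_1)+N_2I,\ \mathcal{L}(G_2)+N_1I)-\begin{pmatrix}0&J\\J^\top&0\end{pmatrix}$.
\begin{itemize}
\item Eigenvectors of $\mathcal{L}(G_1)$ orthogonal to $\mathbf 1_{N_1}$, extended by $0$, are killed by the $J$-blocks, which gives $\alpha_i+N_2$.
\item Symmetrically, extending eigenvectors of $\mathcal{L}(G_2)$ orthogonal to $\mathbf 1_{N_2}$ gives $\beta_j+N_1$.
\item On the two-dimensional span of $(\mathbf 1_{N_1},0)$ and $(0,\mathbf 1_{N_2})$, the quotient matrix $\begin{pmatrix}N_2&-N_2\\-N_1&N_1\end{pmatrix}$ has eigenvalues $0$ and $N$.
\end{itemize}
Orthogonality accounts for all $N$ eigenvalues.

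Substituting this spectrum into Corollary~\ref{fullloopchi(1)}, the eigenvalue $\theta=N$ contributes $2e^{-(N+1)t}$. The remaining eigenvalues contribute $2\sum_{i\ge2}e^{-(\alpha_i+N_2+1)t}$ and $2\sum_{j\ge2}e^{-(\beta_j+N_1+1)t}$. This is exactly the claimed formula.

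The main obstacle is only bookkeeping: correctly identifying the loop set $S=V(H)$, and checking that the join Laplacian spectrum is applied to the simple graph $H$ rather than the looped one. Corollary~\ref{fullloopchi(1)} has already absorbed the $+1$ shift coming from the loops. Note also that Corollary~\ref{fullloopchi(1)} is stated with its proof omitted. If needed, I would verify it via Theorem~\ref{thm:isotypic_heat_decomposition}:
\[
\chi_t(1)=2\,\mathrm{Tr}\,e^{-t\mathcal{L}(\widehat H)}+\chi_t(g),\qquad \mathcal{L}(\widehat H)=\mathcal{L}(H)+I.
\]
This gives $2\sum_{j=1}^N e^{-(\theta_j+1)t}+1+e^{-(2N+1)t}-e^{-t}$, which simplifies to the stated form.
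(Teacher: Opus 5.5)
Your proposal is correct and follows essentially the same route as the paper: you identify $\widehat{G_1}\vee\widehat{G_2}=\widehat{G_1\vee G_2}$, use the Laplacian spectrum of the join, and conclude via Theorem~\ref{thm:character_special_case}(iv) and Corollary~\ref{fullloopchi(1)}. You also supply two details the paper leaves implicit. One is the eigenvector and quotient-matrix justification of the join spectrum, together with the connectivity of $H$ that makes $\theta_1=0$ simple. The other is a verification of the omitted Corollary~\ref{fullloopchi(1)} via $\chi_t(1)=\chi_t(g)+2\,\mathrm{Tr}\,e^{-t\mathcal{L}(\widehat H)}$; both are correct.
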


\begin{proof}
	The Laplacian matrix of the join is 
	\[
	\mathcal{L}(G_1 \vee G_2)=
	\begin{pmatrix}
	\mathcal{L}(G_1) + N_2 I_{N_1} & -J_{N_1 \times N_2} \\
	-J_{N_2 \times N_1} & 	\mathcal{L}(G_2) + N_1 I_{N_2}
	\end{pmatrix}.
	\]
	Thus, the Laplacian spectrum of the join $G_1\vee G_2$ can be obtained to be
	\[
	\left\{
	0, N, \alpha_2+N_2,\ldots,\alpha_{N_1}+N_2,
	\beta_2+N_1,\ldots,\beta_{N_2}+N_1
	\right\}.
	\]
	Since every vertex of $\widehat{G_1}\vee\widehat{G_2}$ carries a loop, this graph is precisely the full-loop graph $\widehat{G_1\vee G_2}.$ Hence, the full-loop character formulae for $\chi_t^{\widetilde{\widehat{G_1}\vee\widehat{G_2}}}(g)$ and  $\chi_t^{\widetilde{\widehat{G_1}\vee\widehat{G_2}}}(1)$ follow from Theorem \ref{thm:character_special_case}(iv) and Corollary \ref{fullloopchi(1)}, respectively.
\end{proof}

The line graph of a self-loop graph has recently been studied by the author et. al. in \cite{akbari2024line}. For the readers' convenience, let us first recall the definition.

\begin{definition}\cite[Def. 2.4]{akbari2024line}\label{def:linegraph}
For any self-loop graph $G_S,$ the line graph $L(G_S)$ is a graph whose vertices are the edges of $G_S$, with two vertices in $L(G_S)$ adjacent whenever the corresponding edges in $G_S$ have exactly one vertex in common. Each loop attached at a vertex $v \in V(G_S)$ is the vertex with a loop in $L(G_S)$, and this vertex is adjacent to those vertices in $L(G_S)$ which correspond to the edges of $G_S$ incident with the vertex $v\in V(G_S)$.
\end{definition}
In particular, the adjacency matrix of $L(G_S)$ is given by 
\[
A(L(G_S))
=\begin{pmatrix}
	A(L(G)) & F^\top \\
	F  & I_\sigma
 \end{pmatrix},
\] where $F$ is the loop-edge adjacency matrix.  If $\sigma =n,$ then $F$ coincides with the incidence matrix $B(G)$ of $G.$ 

Definition \ref{def:linegraph} indicates that the line graph $L(G_S)$ is not a full-loop graph even if $G_S$ is full-loop, which means Theorem \ref{thm:character_special_case}(iv) cannot be applied directly for the line graph of $\widehat{G}$. 

Before proving the result, we shall first prove the Laplacian analog of \cite[Theorem 2.8]{akbari2024line} that expresses the relation between the Laplacian characteristic polynomial of $L(\widehat{G})$ and $G.$

\begin{lemma}
	Let $G$ be a connected $r$-regular simple graph of order $N$ and size $m=Nr/2, r\geq 2$. Let $H=L(\widehat{G})$ be the line graph of $\widehat{G}.$ Let $\mathcal L(H)$ and $\mathcal L(G)$ be the Laplacian matrices for $H$ and $G,$ respectively. Then,
	\[
	\Phi_{\mathcal L(H)}(x)	= (x-2r-2)^{m-N}(x-r-2)^N \Phi_{\mathcal L(G)}\left(\frac{x^2-(r+3)x+2}{x-r-2}\right).
	\]
\end{lemma}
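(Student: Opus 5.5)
The plan is to write $\mathcal L(H)$ explicitly in block form and reduce the characteristic polynomial to that of $\mathcal L(G)$ by a Schur complement followed by a Sylvester-type determinant identity. Let $B=B(G)$ be the $N\times m$ vertex--edge incidence matrix of $G$. Since $\widehat G$ is full-loop, $F=B$, and
\[
A(H)=\begin{pmatrix} A(L(G)) & B^\top\\ B & I_N\end{pmatrix}.
\]
I would use the standard facts $B^\top B=2I_m+A(L(G))$ and $BB^\top=D(G)+A(G)=2rI_N-\mathcal L(G)$, the latter because $G$ is $r$-regular.

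Next I would compute the degrees, using the convention of Definition~\ref{LaplacianLoop}, i.e.\ Laplacian of the simple part plus $\mathrm{diag}(s)$.
\begin{enumerate}[(i)]
\item An edge-vertex $uv$ has $2(r-1)$ neighbours in $L(G)$ and two loop-vertex neighbours (at $u$ and at $v$), so its degree is $2r$.
\item A loop-vertex at $v$ has $r$ edge-vertex neighbours, plus $1$ from its own loop, so its diagonal entry is $r+1$.
\end{enumerate}
Hence
\[
\mathcal L(H)=\begin{pmatrix}(2r+2)I_m-B^\top B & -B^\top\\ -B & (r+1)I_N\end{pmatrix},
\]
\[
xI-\mathcal L(H)=\begin{pmatrix}(x-2r-2)I_m+B^\top B & B^\top\\ B & (x-r-1)I_N\end{pmatrix}.
\]
I expect this bookkeeping (the diagonal entry $r+1$ rather than $r$ or $r+2$) to be the main point where an error could creep in. As a consistency check, the final formula should come out exactly with the factor $(x-r-2)^N$.

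For $x\neq r+1$, I would take the Schur complement with respect to the lower-right block. This gives
\[
\Phi_{\mathcal L(H)}(x)=(x-r-1)^N\det\bigl(aI_m+cB^\top B\bigr),
\]
where
\[
a=x-2r-2,\qquad c=\frac{x-r-2}{x-r-1}.
\]
Then I would apply $\det(aI_m+cB^\top B)=a^{m-N}\det(aI_N+cBB^\top)$, which holds by Sylvester's identity and uses $m\ge N$ for $r\ge2$. Substituting $BB^\top=2rI-\mathcal L(G)$ gives
\[
\det(aI_N+cBB^\top)=c^N\det\Bigl(\bigl(\tfrac ac+2r\bigr)I-\mathcal L(G)\Bigr)=c^N\,\Phi_{\mathcal L(G)}\bigl(\tfrac ac+2r\bigr).
\]
In the product, $(x-r-1)^Nc^N$ collapses to $(x-r-2)^N$. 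A direct simplification then gives
\[
\frac ac+2r=\frac{(x-2r-2)(x-r-1)}{x-r-2}+2r=\frac{x^2-(r+3)x+2}{x-r-2},
\]
which is the claimed identity.

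Finally, the identity is obtained for all $x\notin\{r+1,\,r+2\}$. To conclude, I would note that the right-hand side is a polynomial: $\Phi_{\mathcal L(G)}$ has degree $N$, so the factor $(x-r-2)^N$ clears all denominators. Two polynomials agreeing on infinitely many points coincide, so the identity extends to all $x$.
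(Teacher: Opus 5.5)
Your proposal is correct and follows essentially the same route as the paper: the same block form of $\mathcal L(H)$, the Schur complement on the $(x-r-1)I_N$ block, Sylvester's identity $\det(aI_m+cB^\top B)=a^{m-N}\det(aI_N+cBB^\top)$, and the substitution $BB^\top=2rI_N-\mathcal L(G)$. Your closing polynomial-identity argument, which extends the formula to the excluded points $x\in\{r+1,r+2\}$, is a small detail that the paper leaves implicit.
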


\begin{proof}
	First, order the vertices of $H$ by the ordinary edges of $G$ followed by the loops of $\widehat{G}$. Since $B^\top B=A(L(G))+2I_m$, we obtain
	\[
	\mathcal L(H)
	= \begin{pmatrix}
		2rI_m-A(L(G))&-B^\top\\
		-B&(r+1)I_N
	\end{pmatrix} 	
	=\begin{pmatrix}
		(2r+2)I_m-B^\top B&-B^\top\\
		-B&(r+1)I_N
	\end{pmatrix}.
	\] 
	Consider the characteristic polynomials $\Phi_{\mathcal L(G)}(x)=\det(xI_N-\mathcal{L}(G))$ and $\Phi_{\mathcal L(H)}(x)=\det(xI_{m+N}-\mathcal{L}(H)).$ Then, applying the Schur complement and taking determinant gives 
	\begin{align*}
	\Phi_{\mathcal L(H)}(x)
	&= (x-r-1)^N \det\left((x-2r-2)I_m + B^\top B -	B^T \left(\frac{1}{x-r-1} I_N\right) B	\right) \\
	&= (x-r-1)^N \det\left((x-2r-2)I_m+	\frac{x-r-2}{x-r-1}B^\top B	\right) \\
	&= (x-2r-2)^{m-N}\det\left(\left[x^2-(r+3)x+2\right]I_N-(x-r-2)\mathcal{L}(G)\right) \\
	&= (x-2r-2)^{m-N}(x-r-2)^N 	\Phi_{\mathcal L(G)}\left(\frac{x^2-(r+3)x+2}{x-r-2}\right).
	\end{align*}	
	Here in the third equality, we use an identity $\det(aI_m + cB^\top B)=a^{m-N} \det(aI_N + cBB^\top)$ together with  $BB^\top=2rI_N-\mathcal{L}(G).$
\end{proof}

	It is an easy consequence that for each $\theta_j$ the corresponding eigenvalues of $\mathcal{L}(H)$ are the roots of $x^2-(\theta_j+r+3)x+(r+2)\theta_j+2=0.$
In particular, 
\[
\mathrm{Spec}(\mathcal{L}(H))
=\begin{bmatrix}
	\rho_1^+ & \rho_1^- & \rho_2^+ & \rho_2^- & \cdots &
	\rho_N^+ & \rho_N^- & 2r+2\\
	1 & 1 & 1 & 1 & \cdots & 1 & 1 & m-N
\end{bmatrix},
\]
where
\begin{equation} \label{eq:linegraph_eigenv1}
\rho_j^\pm = \frac{\theta_j+r+3 \pm \sqrt{(\theta_j-r-1)^2+4r}}{2}, \quad 1\leq j\leq N.
\end{equation}
Of course, to determine $\chi^{\widetilde{H}}_t(g),$ from the formula \eqref{eq:equivheatchar3}, we still need the counterpart from $M_{sym}.$ Observe that in this case, the loop vector of $H$ is given by $s_H=(0_m, \mathbf 1_N)^\top.$ Hence, the symmetric block associated with the A{\c c}{\i}kme{\c s}e
lift of $H$ is
\[
M_{sym}(H)
=\begin{pmatrix}
  2N & -\sqrt{2}s_H^\top\\
  -\sqrt{2}s_H & \mathcal L(H)
 \end{pmatrix}.
\]
Since $s_H=(0_m,\mathbf 1_N)^\top$ belongs to the two-dimensional $\mathcal L(H)$-invariant subspace associated with $\theta_1=0$, adjoining the central coordinate yields a three-dimensional invariant subspace of $M_{sym}(H)$. On this three-dimensional subspace, $M_{sym}(H)$ is represented by
\[
\begin{pmatrix}
	2N & 0 & -\sqrt{2N}\\
	0  & 2 & -\sqrt{2r}\\
	-\sqrt{2N} &-\sqrt{2r} & r+1
\end{pmatrix},
\]
whose eigenvalues can be determined as $0$ and $\eta^{\pm},$ where
\begin{equation} \label{eq:linegraph_eigenv2}
\eta^\pm = \frac{2N+r+3\pm\sqrt{(2N-r-1)^2+4r}}{2}.
\end{equation}

Since $s_H$ belongs to the two-dimensional $\mathcal L(H)$-invariant subspace associated with $\theta_1=0$, its orthogonal complement is also invariant under $\mathcal L(H)$. Moreover, every vector $w$ in this orthogonal complement satisfies $s_H^\top w=0$. Hence,
\[
M_{sym}(H)
\begin{pmatrix}
	0\\
	w
\end{pmatrix}
=
\begin{pmatrix}
	0\\
	\mathcal L(H)w
\end{pmatrix}.
\]
Therefore, on the complementary invariant subspace, $M_{sym}(H)$ and $\mathcal L(H)$ have the same eigenvalues with the same multiplicities, and these eigenvalues cancel in the equivariant heat character.

Combining all, we can now state the result on the equivariant heat character for the A\c{c}{\i}kme\c{s}e lift of the line graph of a full-loop $r$-regular graph :
\begin{proposition}
	\label{prop:line_graph_full_loop_character}
	Let $G$ be a connected $r$-regular simple graph of order $N$ and size $m=Nr/2, r\geq 2$, and let $\widehat{G}$ be its full-loop graph. Let $H=L(\widehat{G})$ be the line graph of $\widehat{G}.$ Then, for every $t>0$,
	\[
	\chi_t^{\widetilde{H}}(g) = 1 + e^{-\eta^-t}+e^{-\eta^+t} - e^{-\rho_1^-t} - e^{-\rho_1^+t},
	\] 
	where $\rho_1^\pm$ (i.e., $\theta_1=0$) and $\eta^{\pm}$ are given in \eqref{eq:linegraph_eigenv1} and \eqref{eq:linegraph_eigenv2}, respectively.
\end{proposition}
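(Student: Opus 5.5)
Starting from \eqref{eq:equivheatchar3} of Theorem~\ref{thm:isotypic_heat_decomposition}, we have $\chi_t^{\widetilde H}(g)=\mathrm{Tr}(e^{-tM_{sym}(H)})-\mathrm{Tr}(e^{-t\mathcal L(H)})$, so the task is to show that, counted with multiplicity, $\mathrm{Spec}(M_{sym}(H))$ equals $\mathrm{Spec}(\mathcal L(H))$ with $\{\rho_1^+,\rho_1^-\}$ removed and $\{0,\eta^+,\eta^-\}$ added. The plan follows the discussion preceding the statement.

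First we identify the two-dimensional subspace $W\subseteq\mathbb R^{m+N}$ attached to $\theta_1=0$. Since $G$ is connected, $\ker\mathcal L(G)=\mathrm{span}(\mathbf 1_N)$. Using the block form of $\mathcal L(H)$ from the preceding lemma, together with $B\mathbf 1_m=r\mathbf 1_N$ and $B^\top\mathbf 1_N=2\mathbf 1_m$, the unit vectors $u_1=(\mathbf 1_m,0)/\sqrt m$ and $u_2=(0,\mathbf 1_N)/\sqrt N$ span an $\mathcal L(H)$-invariant subspace $W$. A direct computation gives $\mathcal L(H)u_1=2u_1-\sqrt{2r}\,u_2$ and $\mathcal L(H)u_2=-\sqrt{2r}\,u_1+(r+1)u_2$, using $m=Nr/2$. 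So $\mathcal L(H)|_W$ is represented by the $2\times 2$ matrix with trace $r+3$ and determinant $2r+2-2r=2$. Its eigenvalues are the roots of $x^2-(r+3)x+2$, which are exactly $\rho_1^\pm$ from \eqref{eq:linegraph_eigenv1} at $\theta_1=0$.

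Next we treat the symmetric block. Since $s_H=\sqrt N\,u_2\in W$, the space $\widehat W=\mathrm{span}\{e_0,(0,u_1),(0,u_2)\}$ is $M_{sym}(H)$-invariant. In the basis $e_0,u_1,u_2$ the matrix of $M_{sym}(H)|_{\widehat W}$ is the displayed $3\times3$ matrix, because $-\sqrt2\,s_H^\top u_2=-\sqrt{2N}$. Its characteristic polynomial should factor as $x\bigl(x^2-(2N+r+3)x+c\bigr)$: the constant term vanishes because $\widetilde H$ is connected, so $M_{sym}(H)$ is singular, and indeed $\det=0$ can be checked directly. We then compute $c$ as the sum of the principal $2\times2$ minors, $c=4N+(2Nr+2N-2N)+(2r+2-2r)=2Nr+4N+2$. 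The discriminant $(2N+r+3)^2-4c$ should simplify to $(2N-r-1)^2+4r$, giving the eigenvalues $0,\eta^\pm$ of \eqref{eq:linegraph_eigenv2}. This algebraic check is the step where an error is most likely, so it needs care.

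For the orthogonal complement, the space $W^\perp$ is $\mathcal L(H)$-invariant because $\mathcal L(H)$ is symmetric. Every $w\in W^\perp$ satisfies $s_H^\top w=0$, so $M_{sym}(H)(0,w)^\top=(0,\mathcal L(H)w)^\top$, and $\widehat W^\perp=\{(0,w):w\in W^\perp\}$. Hence $\mathrm{Spec}(M_{sym}(H)|_{\widehat W^\perp})=\mathrm{Spec}(\mathcal L(H)|_{W^\perp})$ with multiplicities. Taking traces of exponentials, these contributions cancel, and we obtain $\chi_t^{\widetilde H}(g)=(1+e^{-\eta^+t}+e^{-\eta^-t})-(e^{-\rho_1^+t}+e^{-\rho_1^-t})$. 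The main obstacle is only bookkeeping: checking that $\rho_1^\pm$ are eigenvalues of $\mathcal L(H)$ restricted to $W$ specifically, with multiplicity handled via the invariant splitting rather than via the characteristic-polynomial lemma, so that coincidences such as $\rho_1^-=\rho_j^\pm$ for $j>1$ cause no double counting.
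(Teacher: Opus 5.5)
Your proposal is correct and follows the paper's own argument. You split off the $\mathcal L(H)$-invariant plane $W=\mathrm{span}\{(\mathbf 1_m,0),(0,\mathbf 1_N)\}$ containing $s_H$, enlarge it by the central coordinate to a three-dimensional $M_{sym}(H)$-invariant subspace carrying $0,\eta^\pm$, and cancel the common spectrum on the orthogonal complements. The algebra you flagged does check out, since $(2N+r+3)^2-4(2Nr+4N+2)=(2N-r-1)^2+4r$, and your explicit $2\times 2$ block for $\mathcal L(H)|_W$ makes precise the ``subspace associated with $\theta_1=0$'' that the paper leaves implicit.
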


\subsection{Regularized Equivariant Heat Integral}

Throughout this section, we shall assume $G_S$ is pseudo-connected. In particular, $\mathcal{L}(G_S)$ is positive definite by Lemma \ref{lem:positive-definite}, and $\mathcal{L}(G_S)^{-1}$ is an ordinary inverse. The A\c{c}{\i}kme\c{s}e lift graph $\widetilde{G}$ is assumed to be connected, and $M_{sym}$ has exactly one zero eigenvalue. 

Observe that $\lim_{t \to \infty} \chi_t^{\widetilde{G}}(g)=1,$ indicating $\int_0^\infty \chi_t^{\widetilde{G}}(g) dt$ diverges. This motivates the following definition on the modified integral. 

\begin{definition}
	\label{def:regularized_eq_heat_integral}
	Let $\widetilde{G}$ be the A\c{c}{\i}kme\c{s}e lift of a self-loop graph $G_S.$ Let $\chi^{\widetilde{G}}_t(g)$ be the equivariant heat character of $g.$ Define the \textit{regularized equivariant heat integral} of $\widetilde{G}$ by
	\[
	\mathcal{I}_{eq}(\widetilde{G}) := \int_0^\infty \left( \chi_t^{\widetilde{G}}(g)-1 \right)dt.
	\]
\end{definition}

\begin{theorem}
	\label{thm:spectral_form_Ieq}
	Let $\nu_1,\dots,\nu_N$ be the positive eigenvalues of $M_{sym}$, and let $\lambda_1,\dots,\lambda_N$ be the eigenvalues of $\mathcal{L}(G_S)$. Then,
	\begin{equation}\label{eq:Ieq1}
	\mathcal{I}_{eq}(\widetilde{G})	= \sum_{j=1}^N\frac{1}{\nu_j} - \sum_{i=1}^N\frac{1}{\lambda_i}.
	\end{equation}
	Equivalently,
	\begin{equation}\label{eq:Ieq2}
	\mathcal{I}_{eq}(\widetilde{G})	= \mathrm{Tr}(M_{sym}^{\dagger}) - \mathrm{Tr}(\mathcal{L}(G_S)^{-1}),
	\end{equation}
	where $M_{sym}^{\dagger}$ denotes the Moore-Penrose generalized inverse\footnote{We refer the readers to \cite[pp. 453]{HornJohnson2013} for its definition and properties.} of $M_{sym}$.
\end{theorem}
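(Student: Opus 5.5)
The plan is to reduce everything to the spectral split of Theorem~\ref{thm:isotypic_heat_decomposition}. By \eqref{eq:equivheatchar3}, $\chi_t^{\widetilde G}(g)=\mathrm{Tr}(e^{-tM_{sym}})-\mathrm{Tr}(e^{-t\mathcal L(G_S)})$. Under the standing pseudo-connectedness assumption, $\mathcal L(G_S)$ is positive definite by Lemma~\ref{lem:positive-definite}, so its eigenvalues $\lambda_1,\dots,\lambda_N$ are all positive. By Theorem~\ref{spectrumsum} and the connectedness of $\widetilde G$, the $N+1$ eigenvalues of $M_{sym}$ are $0$ (simple) together with $\nu_1,\dots,\nu_N>0$. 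This is exactly the argument already used in the proof of the corollary following Theorem~\ref{thm:tightbound1}. Hence
\[
\chi_t^{\widetilde G}(g)-1=\sum_{j=1}^N e^{-\nu_j t}-\sum_{i=1}^N e^{-\lambda_i t},
\]
since the zero eigenvalue contributes $e^{0}=1$, which the subtraction of $1$ cancels exactly.

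Next I will integrate term by term over $[0,\infty)$. Each summand is a decaying exponential with a positive rate, so each is absolutely integrable. The sum is finite, so linearity of the integral applies with no interchange issue, and $\int_0^\infty e^{-at}\,dt=1/a$ for $a>0$ gives \eqref{eq:Ieq1}. I do not expect a real obstacle. The only point needing care is justifying that the integrand is integrable near $t=\infty$, which relies on the zero eigenvalue being simple and confined to the symmetric block. This is exactly where pseudo-connectedness and connectedness of the lift enter.

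For \eqref{eq:Ieq2}, I will use the spectral decomposition $M_{sym}=\sum_{j}\nu_j w_jw_j^\top$, summed over the positive eigenvalues with orthonormal eigenvectors $w_j$. The Moore--Penrose inverse of a real symmetric matrix is then $M_{sym}^\dagger=\sum_j \nu_j^{-1}w_jw_j^\top$; it inverts on the range and vanishes on the kernel. Its trace is therefore $\sum_j 1/\nu_j$. Likewise $\mathrm{Tr}(\mathcal L(G_S)^{-1})=\sum_i 1/\lambda_i$. Substituting these into \eqref{eq:Ieq1} gives \eqref{eq:Ieq2}.
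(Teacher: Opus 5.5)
Your proposal is correct and follows the paper's proof essentially step for step. You split $\chi_t^{\widetilde G}(g)$ via \eqref{eq:equivheatchar3}, use the simple zero eigenvalue of $M_{sym}$ to cancel the $1$, integrate the finite sum of decaying exponentials termwise, and read off \eqref{eq:Ieq2} from the spectral form of the Moore--Penrose inverse; you also justify the simple zero eigenvalue (positive definiteness of $\mathcal{L}(G_S)$ plus connectedness of the lift) explicitly, where the paper takes it as a standing assumption of the subsection.
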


\begin{proof}
	Since $M_{sym}$ has only a zero eigenvalue and $\mathcal{L}(G_S)$ is positive definite, we have respectively  $\mathrm{Tr}(e^{-tM_{sym}})=1+\sum_{j=1}^Ne^{-t\nu_j}$ and $\mathrm{Tr}(e^{-t\mathcal{L}(G_S)})=\sum_{i=1}^Ne^{-t\lambda_i}$. Using \eqref{eq:equivheatchar3}, we obtain
	\[
	\chi_t^{\widetilde{G}}(g)-1	= \sum_{j=1}^Ne^{-t\nu_j} - \sum_{i=1}^Ne^{-t\lambda_i}.
	\]
	Integrating termwise (using the identity $\int_0^\infty e^{-ta} dt = 1/a$ for $a>0$) gives the stated formula \eqref{eq:Ieq1}. For the equivalent trace formula, take an orthogonal spectral decomposition $M_{sym}=Q \; \mathrm{diag}(0,\nu_1,\ldots, \nu_N) \; Q^\top.$ Then, its Moore-Penrose generalized inverse is
	\[
	M^{\dagger}_{sym}=Q \; \mathrm{diag}\left(0,\frac{1}{\nu_1},\ldots, \frac{1}{\nu_N}\right) \; Q^\top.
	\]

	Similarly, with $\mathcal{L}(G_S)=P \;\mathrm{diag}(\lambda_1,\ldots, \lambda_N) \; P^\top,$ its inverse is 
	\[
	\mathcal{L}(G_S)^{-1}=P\;\mathrm{diag}\left(\frac{1}{\lambda_1},\ldots, \frac{1}{\lambda_N} \right) \; P^\top.
	\]
	By taking the traces for both, \eqref{eq:Ieq2} follows immediately.
\end{proof}

\begin{theorem} \label{thm:resolvent_expression_Ieq}
	Let $\widetilde{G}$ be the A\c{c}{\i}kme\c{s}e lift of a self-loop graph $G_S.$ Then, 
	\begin{equation}\label{eq:Ieq3}
	\mathcal{I}_{eq}(\widetilde{G})	= -\frac{2s^\top\mathcal{L}(G_S)^{-3}s}{1+2s^\top\mathcal{L}(G_S)^{-2}s}.
	\end{equation}
\end{theorem}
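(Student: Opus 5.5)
The plan is to obtain $\mathcal{I}_{eq}(\widetilde G)$ as the $p\to0^+$ limit of a Laplace transform, and to evaluate that limit with the logarithmic-derivative identity of Theorem~\ref{thm:equivariant_resolvent}. Under the standing pseudo-connectedness assumption, the proof of Theorem~\ref{thm:spectral_form_Ieq} gives
\[
\chi_t^{\widetilde G}(g)-1=\sum_{j}e^{-t\nu_j}-\sum_i e^{-t\lambda_i},
\]
with all $\nu_j,\lambda_i>0$. This function is bounded by a finite sum of decaying exponentials, so it is absolutely integrable on $[0,\infty)$. Dominated convergence then gives
\[
\mathcal{I}_{eq}(\widetilde G)=\lim_{p\to0^+}\int_0^\infty e^{-pt}\bigl(\chi_t^{\widetilde G}(g)-1\bigr)\,dt .
\]
For $p>0$ we have $\int_0^\infty e^{-pt}\,dt=1/p$. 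Combining this with \eqref{eq:equivariant_resolvent1}, and writing $f(p):=p+2\sigma-2R_{G,S}(p)$, we get
\[
\mathcal{I}_{eq}(\widetilde G)=\lim_{p\to0^+}\Bigl(\frac{f'(p)}{f(p)}-\frac1p\Bigr).
\]

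Next, I would expand $f$ at $p=0$. Since $\mathcal L(G_S)$ is invertible, for small $|p|$ the Neumann series gives
\[
(pI+\mathcal L(G_S))^{-1}=\mathcal L(G_S)^{-1}-p\,\mathcal L(G_S)^{-2}+p^2\mathcal L(G_S)^{-3}-\cdots .
\]
This is the expansion at $0$, as opposed to the one at $\infty$ in Theorem~\ref{thm:loop_resolvent_spectral_measure_and_moments}. Set $a:=s^\top\mathcal L(G_S)^{-2}s$ and $b:=s^\top\mathcal L(G_S)^{-3}s$. Then
\[
R_{G,S}(p)=s^\top\mathcal L(G_S)^{-1}s-pa+p^2b+O(p^3).
\]
The key input is Corollary~\ref{cor:Linv}, which gives $s^\top\mathcal L(G_S)^{-1}s=\sigma$. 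This makes the constant term of $f$ cancel, so that
\[
f(p)=p(1+2a)-2bp^2+O(p^3).
\]
Here $f$ is rational and analytic near $0$, with a simple zero there, because $1+2a\ge1>0$.

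Finally, for any $f$ analytic near $0$ with $f(0)=0$ and $f'(0)\neq0$, one has
\[
\frac{f'(p)}{f(p)}=\frac1p+\frac{f''(0)}{2f'(0)}+O(p).
\]
Here $f'(0)=1+2a$ and $f''(0)=-4b$, so the limit equals $-2b/(1+2a)$, which is exactly \eqref{eq:Ieq3}.

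The main obstacle is not the algebra but the cancellation of the $1/p$ singularity. It depends entirely on $f(0)=0$, that is, on Corollary~\ref{cor:Linv}, which itself comes from $\det M_{sym}=0$ combined with \eqref{eq:equivariant_resolvent2}. I would state this dependence explicitly. I would also justify interchanging the limit $p\to0^+$ with the integral via the exponential domination noted above, rather than appealing to Abelian theorems.
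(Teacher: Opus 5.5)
Your proposal is correct and follows essentially the same route as the paper. Both arguments use the resolvent expansion at $p=0$ together with Corollary~\ref{cor:Linv} to write $p+2\sigma-2R_{G,S}(p)=p\,H(p)$, and then read off $\mathcal{I}_{eq}(\widetilde G)=\lim_{p\to0}(f'/f-1/p)=H'(0)/H(0)$. The only difference is how the limit identity is justified: you apply dominated convergence to the Laplace-transform identity \eqref{eq:equivariant_resolvent1}, while the paper derives it from the spectral formula \eqref{eq:Ieq1} and the product factorization of the determinant ratio.
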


\begin{proof}	
	Expand $R_{G,S}(p)$ around $p=0,$ by a computation and Corollary \ref{cor:Linv},
	\begin{equation}\label{eq:RGSexpansion2}
	p+2\sigma-2R_{G,S}(p) = p \left(1+2s^\top\mathcal{L}(G_S)^{-2}s	- 2ps^\top\mathcal{L}(G_S)^{-3}s + O(p^2) \right).
	\end{equation}
	For simplicity, let $H(p)=1+2s^\top\mathcal{L}(G_S)^{-2}s - 2ps^\top\mathcal{L}(G_S)^{-3}s + O(p^2)$ and $F(p)$ be the determinant fraction in \eqref{eq:equivariant_resolvent2}. Then, \eqref{eq:RGSexpansion2} is simply $F(p)=pH(p).$ First, if $\{\nu_j\}$ and $\{\lambda_j\}$ are the spectrum of $M_{sym}$ and $\mathcal{L}(G_S),$ respectively, then observe that 
	\[
	F(p) = p\frac{\prod_{j=1}^{N}(p+\nu_j)}{\prod_{j=1}^{N}(p+\lambda_j)}.
	\] Thus,
	\begin{equation}\label{eq:Fpderiv1}
	\frac{F'(p)}{F(p)} = \frac{1}{p}+ \sum^N_{j=1} \frac{1}{p+\nu_j}-\sum^N_{j=1} \frac{1}{p+\lambda_j}.
	\end{equation} 
	On the other hand, from $\log(F(p))= \log(p)+\log(H(p)),$ we also have
	\begin{equation}\label{eq:Fpderiv2}
	\frac{F'(p)}{F(p)} = \frac{1}{p} + \frac{H'(p)}{H(p)}.
	\end{equation} 
	Thus, by Theorem \ref{thm:spectral_form_Ieq} , together with Equations \eqref{eq:Fpderiv1} and \eqref{eq:Fpderiv2}, we obtain
	\begin{align*}
	\mathcal{I}_{eq}(\widetilde{G})
	&= \lim_{p\to 0} \left(\frac{F'(p)}{F(p)} -\frac{1}{p} \right) \\
	&= \lim_{p\to 0} \left(\frac{H'(p)}{H(p)}\right) \\
	&= \lim_{p\to 0} \left(\frac{-2s^\top \mathcal{L}(G_S)^{-3}s +O(p)}{1+2s^\top\mathcal{L}(G_S)^{-2}s - 2ps^\top\mathcal{L}(G_S)^{-3}s + O(p^2)}\right) \\
	&=  - \frac{2s^\top\mathcal{L}(G_S)^{-3}s}{1+2s^\top\mathcal{L}(G_S)^{-2}s}. \qedhere
	\end{align*}
\end{proof}

\begin{corollary}	\label{thm:Ieq_negative_bound}
	Let $\widetilde{G}$ be the A\c{c}{\i}kme\c{s}e lift of a self-loop graph $G_S.$  Then, the following bound holds:
	\[
	-\frac{1}{\lambda_{\min}}<\mathcal{I}_{eq}(\widetilde{G})<0.
	\]
\end{corollary}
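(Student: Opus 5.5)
We will work from the resolvent formula \eqref{eq:Ieq3} in Theorem~\ref{thm:resolvent_expression_Ieq}, writing $L:=\mathcal{L}(G_S)$. By Lemma~\ref{lem:positive-definite}, $L$ is positive definite. We expand the loop vector in an orthonormal eigenbasis of $L$, say $s=\sum_i c_i u_i$ with $Lu_i=\lambda_i u_i$ and $\lambda_i\ge\lambda_{\min}>0$. In this basis
\[
a_k:=s^\top L^{-k}s=\sum_i \frac{c_i^2}{\lambda_i^{k}},\qquad k=2,3,
\]
and then $\mathcal{I}_{eq}(\widetilde G)=-2a_3/(1+2a_2)$.

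\textbf{Upper bound.} Since $\sigma\ge1$, we have $s\neq0$, so $a_3>0$ because $L^{-3}$ is positive definite. Also $a_2\ge 0$, so the denominator $1+2a_2$ is positive. Hence $\mathcal{I}_{eq}(\widetilde G)<0$. As an independent check, the same sign follows from Theorem~\ref{thm:spectral_form_Ieq}: the interlacing \eqref{eq:specinterlaced} with $\nu_{N+1}=0$ gives $\nu_j\ge\lambda_j$ termwise, so $\sum 1/\nu_j\le\sum 1/\lambda_j$. That route only gives a non-strict inequality, though, so I will rely on the resolvent formula for strictness.

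\textbf{Lower bound.} Each term satisfies $c_i^2/\lambda_i^3\le \frac{1}{\lambda_{\min}}\,c_i^2/\lambda_i^2$, so $a_3\le a_2/\lambda_{\min}$. Therefore
\[
-\mathcal{I}_{eq}(\widetilde G)=\frac{2a_3}{1+2a_2}\le \frac{1}{\lambda_{\min}}\cdot\frac{2a_2}{1+2a_2}<\frac{1}{\lambda_{\min}}.
\]
The final inequality is strict because $2a_2/(1+2a_2)<1$. This gives $-1/\lambda_{\min}<\mathcal{I}_{eq}(\widetilde G)$.

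The main obstacle is simply ensuring strictness and nondegeneracy. This requires $s\ne0$, which holds since $\sigma\ge 1$ in the definition of the lift, and it requires invertibility of $L$, which holds by pseudo-connectedness. Once these two facts are in place, the argument is an elementary Rayleigh-quotient-type comparison.
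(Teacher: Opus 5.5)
Your proof is correct and follows the paper's argument. Both start from \eqref{eq:Ieq3}. Negativity comes from positive definiteness of $\mathcal{L}(G_S)$ together with $s\neq 0$. The lower bound comes from $s^\top\mathcal{L}(G_S)^{-3}s\le\lambda_{\min}^{-1}s^\top\mathcal{L}(G_S)^{-2}s$ and the strict inequality $2a_2/(1+2a_2)<1$; your eigenbasis expansion is simply a coordinate form of the paper's operator inequality $\mathcal{L}(G_S)^{-1}\le\lambda_{\min}^{-1}I$.
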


\begin{proof}
	Since $\mathcal{L}(G_S)$ is positive definite and $s\neq 0$, the right-hand side of \eqref{eq:Ieq3} is strictly negative.
	Let $\lambda_{\min}:=\lambda_{\min}(\mathcal{L}(G_S)).$ Since $\mathcal{L}(G_S)^{-1}\le \lambda_{\min}^{-1}I$, we have
	$s^\top\mathcal{L}(G_S)^{-3}s
	\le \lambda_{\min}^{-1}s^\top\mathcal{L}(G_S)^{-2}s$. Hence, by Theorem~\ref{thm:resolvent_expression_Ieq}, we obtain
	\[
	-\mathcal{I}_{eq}(\widetilde{G})
	\le\frac{1}{1+2s^\top\mathcal{L}(G_S)^{-2}s} \cdot\frac{2s^\top\mathcal{L}(G_S)^{-2}s}{\lambda_{\min}}<\frac{1}{\lambda_{\min}}. 
	\]
\end{proof}

\begin{example}
	\label{ex:petersen_full_loop}
	Let $\mathrm{P}_{10}$ be the Petersen graph. If $\widehat{\mathrm{P}_{10}}$ is its full-loop graph, using the full-loop formula one computes that
	\[
	\chi_t^{\widetilde{\widehat{\mathrm{P}_{10}}}}(g) = 1+e^{-21t}-e^{-t} \quad \text{ and } \quad
	\mathcal I_{eq} \left(\widetilde{\widehat{\mathrm{P}_{10}}} \right)	=
	-\frac{20}{21}.
	\]
	On the other hand, suppose $S$ is the vertex set of a five-cycle in $\mathrm{P}_{10}$ and consider $(\mathrm{P}_{10})_S$ (so $N=10$ and	$\sigma=5$). With respect to the partition
	\[
	V(\mathrm{P}_{10})=S\sqcup\bigl(V(\mathrm{P}_{10})\setminus S\bigr),
	\]
	the quotient matrices of $\mathcal L((\mathrm{P}_{10})_S)$ and $M_{sym}$ are, respectively,
	\[
	Q_{-}
	=\begin{pmatrix}
		2&-1\\
		-1&1
	\end{pmatrix}\quad  \text{ and }
	\quad 
	Q_{+}
	=\begin{pmatrix}
		10&-5\sqrt2&0\\
		-\sqrt2&2&-1\\
		0&-1&1
	\end{pmatrix},
	\]
	with eigenvalues $\lambda_{\pm}=(3\pm\sqrt5)/2$ for $Q_{-}$ and $0,\eta_{\pm}=(13\pm\sqrt{85})/2$ for $Q_{+}.$ On the complementary invariant subspaces, \(M_{sym}\) and \(\mathcal L((\mathrm P_{10})_S)\) have the same eigenvalues, which therefore cancel in the equivariant heat character.	It follows that
	\[
	\chi_t^{\widetilde{(\mathrm{P}_{10})_S}}(g) = 1 + e^{-\eta_+t} + e^{-\eta_-t} - e^{-\lambda_+t} - e^{-\lambda_-t},
	\]
	\[
	\mathcal I_{eq}\left(\widetilde{(\mathrm{P}_{10})_S}\right)
	= \frac{1}{\eta_+}	+ \frac{1}{\eta_-} - \frac{1}{\lambda_+} - \frac{1}{\lambda_-}
	=\frac{13}{21}-3 = -\frac{50}{21}.
	\]
	Thus, 
	\[
	\mathcal I_{eq}	\left(\widetilde{\widehat{\mathrm{P}_{10}}}\right)	= -\frac{20}{21} \neq -\frac{50}{21}
	=\mathcal I_{eq}\left(\widetilde{(\mathrm{P}_{10})_S}\right).
	\]
	This indicates that even for the same underlying Petersen graph, the full-loop and $5$-loop cases yield distinct regularized equivariant heat integrals.
\end{example}

	\vspace{0.5cm}
	\textbf{Acknowledgment.}
	
	Johnny Lim acknowledges the support from the Ministry of Higher Education Malaysia for Fundamental Research Grant Scheme with Project Code: 
	FRGS/1/2025/STG06/USM/02/1.

	\vspace{0.5cm}
	\textbf{Conflicts of interest.} The author declares no conflict of interest. \\
	
	\textbf{Data Availability.} No data is required for this research.
	
	\bibliography{bibliography}{}

\providecommand{\bysame}{\leavevmode\hbox to3em{\hrulefill}\thinspace}
\providecommand{\MR}{\relax\ifhmode\unskip\space\fi MR }
\providecommand{\MRhref}[2]{%
  \href{http://www.ams.org/mathscinet-getitem?mr=#1}{#2}
}
\providecommand{\href}[2]{#2}
\begin{thebibliography}{10}

\bibitem{acikmese2015spectrum}
B.~A\c{c}{\i}kme\c{s}e, \emph{Spectrum of {L}aplacians for {G}raphs with
  {S}elf-{L}oops}, arXiv preprint arXiv:1505.08133 (2015).

\bibitem{akbari2023selfloop}
S.~Akbari, H.~Al~Menderj, M.~H. Ang, J.~Lim, and Z.~C. Ng, \emph{Some {R}esults
  on {S}pectrum and {E}nergy of {G}raphs with {L}oops}, Bull. Malays. Math.
  Sci. Soc. \textbf{46} (2023), no.~3, Paper No. 94, 18. \MR{4567384}

\bibitem{akbari2024line}
S.~Akbari, I.~M. Jovanovi\'c, and J.~Lim, \emph{Line graphs and
  {N}ordhaus-{G}addum-type bounds for self-loop graphs}, Bull. Malays. Math.
  Sci. Soc. \textbf{47} (2024), no.~4, Paper No. 117, 22. \MR{4751718}

\bibitem{anchan20232}
D.~V. Anchan, S.~D’Souza, H.~J. Gowtham, and P.~G. Bhat, \emph{Laplacian
  {E}nergy of a {G}raph with {S}elf-{L}oops}, MATCH Commun. Math. Comput. Chem.
  \textbf{90} (2023), no.~1, 247--258.

\bibitem{biggs1993algebraic}
N.~Biggs, \emph{Algebraic {G}raph {T}heory}, no.~67, Cambridge University
  Press, 1993.

\bibitem{BrouwerHaemers}
A.~E. Brouwer and W.~H. Haemers, \emph{Spectra of {G}raphs}, first ed.,
  Universitext, Springer New York, NY, 2011.

\bibitem{chung1997spectral}
F.~R.K. Chung, \emph{Spectral {G}raph {T}heory}, vol.~92, American Mathematical
  Soc., 1997.

\bibitem{cvetkovic1995spectra}
D.~M. Cvetkovi\'{c}, M.~Doob, and H.~Sachs, \emph{Spectra of {G}raphs}, third
  ed., Johann Ambrosius Barth, Heidelberg, 1995, Theory and applications.
  \MR{1324340}

\bibitem{cvetkovic2010intro}
D.~M. Cvetkovi\'{c}, P.~Rowlinson, and S.~Simi\'{c}, \emph{An {I}ntroduction to
  the {T}heory of {G}raph {S}pectra}, London Mathematical Society Student
  Texts, vol.~75, Cambridge University Press, Cambridge, 2010. \MR{2571608}

\bibitem{kinkar2014laplacian}
K.~Ch. Das and S.~A. Mojallal, \emph{On {L}aplacian energy of graphs}, Discrete
  Math. \textbf{325} (2014), 52--64. \MR{3181233}

\bibitem{demuth2005determining}
M.~Demuth and M.~Krishna, \emph{Determining {S}pectra in {Q}uantum {T}heory},
  Progress in Mathematical Physics, vol.~44, Birkh{\"a}user, 2005.

\bibitem{dummit2004abstract}
D.~S. Dummit and R.~M. Foote, \emph{Abstract {A}lgebra}, 3rd ed., John Wiley \&
  Sons, 2004.

\bibitem{engel2000oneparameter}
K.-J. Engel and R.~Nagel, \emph{One-{P}arameter {S}emigroups for {L}inear
  {E}volution {E}quations}, Graduate Texts in Mathematics, vol. 194, Springer,
  New York, 2000.

\bibitem{godsil2001algebraic}
C.~Godsil and G.~F. Royle, \emph{Algebraic {G}raph {T}heory}, Graduate Texts in
  Mathematics, vol. 207, Springer New York, 2001.

\bibitem{gutman2021energy}
I.~Gutman, I.~Red{\v{z}}epovi{\'c}, B.~Furtula, and A.~Sahal, \emph{Energy of
  {G}raphs with {S}elf-{L}oops}, MATCH Commun. Math. Comput. Chem. \textbf{87}
  (2021), 645--652.

\bibitem{higham2008functions}
N.~J. Higham, \emph{Functions of {M}atrices: {T}heory and {C}omputation},
  Society for Industrial and Applied Mathematics, Philadelphia, 2008.

\bibitem{HornJohnson2013}
R.~A. Horn and C.~R. Johnson, \emph{Matrix analysis}, second ed., Cambridge
  University Press, Cambridge, 2013. \MR{2978290}

\bibitem{jovanovic2023}
I.~Jovanovi{\'c}, E.~Zogi{\'c}, and E.~Glogi{\'c}, \emph{On the conjecture
  related to the energy of graphs with self-loops}, MATCH Commun. Math. Comput.
  Chem. \textbf{89} (2023), 479--488.

\bibitem{lim2025closedwalk}
J.~Lim, \emph{Closed walks of low dimension and twisted moments on self-loop
  graphs}, Bull. Malays. Math. Sci. Soc. \textbf{48} (2025), no.~194.

\bibitem{MajKloGut2009}
S.~Majstorovi\'{c}, A.~Klobu\v{c}ar, and I.~Gutman, \emph{Selected topics from
  the theory of graph energy: hypoenergetic graphs}, Zb. Rad. (Beogr.)
  \textbf{13(21)} (2009), 65--105. \MR{2543254}

\bibitem{marshall2011inequalities}
A.~W. Marshall, I.~Olkin, and B.~C. Arnold, \emph{Inequalities: {T}heory of
  {M}ajorization and {I}ts {A}pplications}, 2 ed., Springer Science \& Business
  Media, 2011.

\bibitem{serre1977linear}
J.-P. Serre, \emph{Linear representations of finite groups}, vol.~42, Springer,
  1977.

\bibitem{stewart2022galois}
I~Stewart, \emph{Galois {T}heory}, 5th ed., CRC Press, 2022.

\bibitem{teschl2000jacobi}
G.~Teschl, \emph{Jacobi {O}perators and {C}ompletely {I}ntegrable {N}onlinear
  {L}attices}, Mathematical Surveys and Monographs, vol.~72, American
  Mathematical Society, Providence, RI, 2000.

\bibitem{theobald1975inequality}
C.~M. Theobald, \emph{An inequality for the trace of the product of two
  symmetric matrices}, Mathematical Proceedings of the Cambridge Philosophical
  Society \textbf{77} (1975), no.~2, 265--267.

\end{thebibliography}
	\bibliographystyle{amsplain}
	

	\end{document}